\documentclass[12pt]{article}

\usepackage[english]{babel}
\usepackage[utf8]{inputenc}
\usepackage[top=2cm, bottom=3cm, left=2.5cm, right=2.5cm]{geometry}
\usepackage{mathrsfs}
\usepackage{amsmath}
\usepackage{amsthm}
\usepackage{amsfonts}
\usepackage{graphicx}
\usepackage{hyperref}

\newcommand{\UIHPT}{\textup{\textsf{UIHPT}}}
\newcommand{\IIC}{\textup{\textsf{IIC}}}
\newcommand{\UIPT}{\textup{\textsf{UIPT}}}
\newcommand{\UIHPQ}{\textup{\textsf{UIHPQ}}}
\renewcommand{\P}{\mathbb{P}}
\newcommand{\m}{\mathbf{m}}
\newcommand{\Pb}{\mathbf{P}}
\newcommand{\tr}{\mathbf{t}}
\newcommand{\lt}{\mathbf{l}}
\newcommand{\M}{\mathcal{M}}
\newcommand{\Ms}{\widehat{\M}}
\newcommand{\dloc}{d_{\textup{loc}}}
\newcommand{\B}{\mathbf{B}}

\newcommand{\A}{\mathsf{A}}
\newcommand{\Ec}{\mathcal{E}}
\newcommand{\Rc}{\mathcal{R}}
\newcommand{\C}{\mathcal{C}}
\newcommand{\T}{\mathcal{T}}
\newcommand{\LT}{\mathcal{L}}
\newcommand{\Tr}{\mathbf{T}}
\newcommand{\Lt}{\mathbf{L}}
\newcommand{\Tinf}{\Tr_\infty}
\newcommand{\Linf}{\Lt_\infty}
\newcommand{\N}{\mathbb{N}}
\newcommand{\Z}{\mathbb{Z}}
\newcommand{\R}{\mathbb{R}}
\newcommand{\mub}{\mu_{\bullet}}
\newcommand{\muw}{\mu_{\circ}}
\newcommand{\GWmm}{\mathsf{GW}_{\muw,\mub}}
\newcommand{\nub}{\nu_{\bullet}}
\newcommand{\nuw}{\nu_{\circ}}
\newcommand{\GWnn}{\mathsf{GW}_{\nuw,\nub}}
\newcommand{\Piic}{\Pb_\IIC}
\newcommand{\Hc}{\mathcal{H}}
\newcommand{\Hb}{\Hc_b}
\newcommand{\Hw}{\Hc_w}
\newcommand{\Hl}{\Hc_l}
\newcommand{\Hr}{\Hc_r}
\newcommand{\Psf}{\mathsf{P}}
\DeclareMathOperator{\Loop}{Loop}
\DeclareMathOperator{\Tree}{Tree}
\DeclareMathOperator{\Treeb}{\textup{\textbf{Tree}}}
\DeclareMathOperator{\Scoop}{Scoop}

\title{\textsc{The Incipient Infinite Cluster of the Uniform Infinite Half-Planar Triangulation}}

\author{Loïc Richier%
\thanks{UMPA, \'Ecole Normale Supérieure de Lyon, 46 allée d'Italie, 69364 Lyon Cedex 07, France. \newline Email: \texttt{loic.richier@ens-lyon.fr}}}

\date{\today}

\newtheorem{Th}{\bf Theorem}
\newtheorem*{Thbis}{\bf Theorem}
\newtheorem{Prop}{\bf Proposition}

\newtheorem{Lem}{\bf Lemma}
\newtheorem{Cor}{\bf Corollary}

\theoremstyle{definition}

\newtheorem{Def}{\bf Definition}

\newtheorem{Alg}{\bf ALGORITHM}
\newtheorem{Rk}{\bf Remark}

\newtheorem*{Ack}{\bf Acknowledgements}

\begin{document}

\maketitle

\begin{abstract}
We introduce the Incipient Infinite Cluster ($\IIC$) in the critical Bernoulli site percolation model on the Uniform Infinite Half-Planar Triangulation ($\UIHPT$), which is the local limit of large random triangulations with a boundary. The $\IIC$ is defined from the $\UIHPT$ by conditioning the open percolation cluster of the origin to be infinite. We prove that the $\IIC$ can be obtained by adding within the $\UIHPT$ an infinite triangulation with a boundary whose distribution is explicit.
\end{abstract}

\section{Introduction}\label{sec:Intro} The purpose of this work is to describe the geometry of a large critical percolation cluster in the (type 2) Uniform Infinite Half-Planar Triangulation ($\UIHPT$ for short), which is the \textit{local limit} of random triangulations with a boundary, upon letting first the volume and then the perimeter tend to infinity. Roughly speaking, rooted planar graphs, or maps, are close in the local sense if they have the same ball of a large radius around the root. The study of local limits of large planar maps goes back to Angel \& Schramm, who introduced in~\cite{angel_uniform_2003} the Uniform Infinite Planar Triangulation ($\UIPT$), while the half-plane model was defined later on by Angel in~\cite{angel_scaling_2004}. Given a planar map, the Bernoulli site percolation model consists in declaring independently every site open with probability $p$ and closed otherwise.

Local limits of large planar maps equipped with a percolation model have been studied extensively. Critical thresholds were provided for the $\UIPT$~\cite{angel_growth_2003} and the $\UIHPT$~\cite{angel_scaling_2004,angel_percolations_2015} as well as for their quadrangular equivalents~\cite{menard_percolation_2014,angel_percolations_2015,richier_universal_2015}. The central idea of these papers is a Markovian exploration of the maps introduced by Angel called the \textit{peeling process}, which turns out to be much simpler in half-plane models: in this setting, various critical exponents~\cite{angel_percolations_2015} and scaling limits of crossing probabilities~\cite{richier_universal_2015} can also be derived.

\medskip

A natural goal in percolation theory is the description of the geometry of percolation clusters at criticality.  In the $\UIPT$, such a description has been achieved by Curien \& Kortchemski in~\cite{curien_percolation_2014}. They identified the \textit{scaling limit} of the boundary of a critical percolation cluster conditioned to be large as a random \textit{stable looptree} with parameter $3/2$, previously introduced in~\cite{curien_random_2014}. Here, our aim is to understand not only the local limit of a percolation cluster conditioned to be large, but also the local limit of the whole $\UIHPT$ under this conditioning. This is inspired by the work of Kesten~\cite{kesten_incipient_1986} in the two-dimensional square lattice.

\smallskip

Precisely, we consider a random map distributed as the $\UIHPT$, equipped with a site percolation model with parameter $p\in[0,1]$, and denote the resulting probability measure by $\Pb_p$ (details are postponed to Section \ref{sec:Definitions}). Angel proved in~\cite{angel_scaling_2004} that the critical threshold $p_c$ equals $1/2$, and that there is no infinite connected component at the critical point almost surely. We also work conditionally on a ``White-Black-White" boundary condition, meaning that all the vertices on the infinite simple boundary of the map are closed, except the origin which is open. We denote by $\C$ the open cluster of the origin, and by $\vert\C \vert$ its number of vertices or \textit{volume}. The exploration of the percolation interface between the origin and its left neighbour on the boundary reveals a closed path in the $\UIHPT$. The maximal length of the \textit{loop-erasure} of this path throughout the exploration is interpreted as the \textit{height} $h(\C)$ of the cluster $\C$. Theorem \ref{thm:IICTheorem} states that 
\[\Pb_p(\cdot \mid \vert \C \vert =\infty) \underset{p \downarrow p_c}{\Longrightarrow}  \Piic\quad \text{and} \quad \Pb_{p_c}(\cdot \mid h(\C) \geq n) \underset{n \rightarrow \infty}{\Longrightarrow}  \Piic\] in the sense of weak convergence, for the local topology. The probability measure $\Piic$ is called (the law of) the Incipient Infinite Cluster of the $\UIHPT$ ($\IIC$ for short) and is supported on triangulations of the half-plane. As in \cite{kesten_incipient_1986}, the limit is universal in the sense that it arises under at least two distinct and natural ways of conditioning $\C$ to be large.

The proof of Theorem \ref{thm:IICTheorem} unveils a decomposition of the $\IIC$ into independent sub-maps with an explicit distribution. We first consider the percolation clusters of the origin and its neighbours on the boundary. By filling in their finite holes, we obtain the associated percolation \textit{hulls}. The boundaries of the percolation hulls are random infinite \textit{looptrees}, that is, a collection of cycles glued along a tree structure introduced in~\cite{curien_random_2014}. The percolation hulls are rebuilt from their boundaries by filling in the cycles with independent \textit{Boltzmann triangulations} with a simple boundary. Finally, the $\IIC$ is recovered by gluing the percolation hulls along \textit{uniform infinite necklaces}, which are random triangulations of a semi-infinite strip first introduced in~\cite{borot_recursive_2012}.

In Theorem \ref{thm:DecompositionTheorem}, we decompose the $\UIHPT$ into two infinite sub-maps distributed as the closed percolation hulls of the $\IIC$, and glued along a uniform necklace. The idea of such a decomposition goes back to \cite{duplantier_liouville_2014}. Together with Theorem \ref{thm:IICTheorem}, this describes how the geometry of the $\UIHPT$ is altered by the conditioning to have an infinite open percolation cluster. The $\IIC$ is obtained by cutting the $\UIHPT$ along the uniform necklace, and gluing inside, \textit{ex-nihilo}, the infinite open percolation hull.

\section{Definitions and results}\label{sec:Definitions}

\noindent\textbf{Notation.} In the following, we use the notation 
\[\N:=\{1,2,\ldots\}, \quad  \Z_+:=\N\cup\{0\}, \quad \Z_-:=\{0,-1,\ldots\} \quad \text{and} \quad \Z^*:=\Z\backslash\{0\}.\]

\subsection{Random planar maps and percolation}\label{sec:RPM}

\noindent\textbf{Maps.} A planar map is the proper embedding of a finite connected graph in the two-dimensional sphere, up to orientation-preserving homeomorphisms. For technical reasons, the planar maps we consider are always \textit{rooted}, meaning that an oriented edge called the \textit{root} is distinguished. The \textit{origin} is the tail vertex of the root. The faces of a planar map are the connected components of the complement of the embedding of the edges. The degree of a face is the number of its incident oriented edges (with the convention that the face incident to an oriented edge lies on its left). The face incident to the right of the root edge is called the \textit{root face}, and the other faces are called \textit{internal}. The set of all planar maps is denoted by $\M_f$, and a generic element of $\M_f$ is usually denoted by $\m$.

In this paper, we deal with \textit{triangulations}, which are planar maps whose faces all have degree three. We will also consider triangulations \textit{with a boundary}, in which all the faces are triangles except possibly the root face. This means that the embedding of the edges of the root face is interpreted as the boundary $\partial \m$ of $\m$. When the edges of the root face form a cycle without self-intersection, the triangulation is said to have a \textit{simple boundary}. The degree of the root face is then the \textit{perimeter} of the triangulation. Any vertex that does not belong to the root face is an \textit{inner} vertex. We make the technical assumption that triangulations are 2-connected (or type 2), meaning that multiple edges are allowed but self-loops are not.

\bigskip

\noindent\textbf{Local topology.} The \textit{local topology} on $\M_f$ is induced by the distance $\dloc$ defined by
\[\dloc(\m,\m'):=\left(1+\sup\left\lbrace R\geq 0 : \B_R(\m) \sim \B_R(\m')\right\rbrace \right)^{-1}, \quad \m,\m' \in \M_f.\] Here, $\B_R(\m)$ is the ball of radius $R$ in $\m$ for the graph distance, centered at the origin vertex. Precisely, $\B_0(\m)$ is the origin of the map, and for every $R>0$, $\B_{R}(\m)$ contains vertices at graph distance less than $R$ from the origin, and all the edges whose endpoints are in this set.

Equipped with the distance $\dloc$, $\M_f$ is a metric space whose completion is denoted by $\M$. The elements of $\M_{\infty}:=\M \setminus \M_f$ can be considered as infinite planar maps, built as the proper embedding of an infinite but locally finite graph into a non-compact surface, dissecting this surface into a collection of simply connected domains (see~\cite[Appendix]{curien_view_2013} for details). The boundary of an infinite planar map is the embedding of edges and vertices of its root face. When the root face is infinite, its vertices and edges on the left (resp.\ right) of the origin form the left (resp.\ right) boundary of the map. We use the notation $\M^\triangle$ for the set of (possibly infinite) triangulations with a boundary, and $\Ms^\triangle$ for the subset of triangulations with a simple boundary.

\bigskip

\noindent\textbf{The uniform infinite half-planar triangulation.} The study of the convergence of random planar triangulations in the local topology goes back to Angel and Schramm \cite[Theorem 1.8]{angel_uniform_2003}, whose result states as follows. For $n\in \N$, let $\P_n$ be the uniform measure on the set of rooted triangulations of the sphere with $n$ vertices. Then, in the sense of weak convergence for the local topology,
\[\P_{n} \underset{n \rightarrow \infty}{\Longrightarrow}  \P_{\infty}.\] The probability measure $\P_{\infty}$ is called (the law of) the Uniform Infinite Planar Triangulation ($\UIPT$) and is supported on infinite triangulations of the plane. An analogous result has been proved in the quadrangular case by Krikun in~\cite{krikun_local_2005}. In this setting, there exists an alternative construction using bijective techniques for which we refer to~\cite{chassaing_local_2006,menard_two_2010,curien_view_2013}.

Later on, Angel introduced in~\cite{angel_scaling_2004} a model of infinite triangulation with an infinite boundary that has nicer properties. For $n\geq 1$ and $m\geq 2$, let $\Ms^\triangle_{n,m}$ be the set of rooted triangulations of the $m$-gon (i.e.\ with a simple boundary of perimeter $m$) having $n$ inner vertices. Let $\P_{n,m}$ be the uniform probability measure on $\Ms^\triangle_{n,m}$. Then, first by \cite[Theorem 5.1]{angel_uniform_2003} and then by \cite[Theorem 2.1]{angel_scaling_2004}, in the sense of weak convergence for the local topology,
\[\P_{n,m} \underset{n \rightarrow \infty}{\Longrightarrow}  \P_{\infty,m} \quad \text{and} \quad \P_{\infty,m} \underset{m \rightarrow \infty}{\Longrightarrow} \P_{\infty,\infty}.\] The probability measure $\P_{\infty,m}$ is called (the law of) the $\UIPT$ of the $m$-gon, while $\P_{\infty,\infty}$ is (the law of) the Uniform Infinite Half-Planar Triangulation ($\UIHPT$) and is supported on infinite triangulations of the upper half-plane (as illustrated in Figure \ref{fig:UIHPTView}). A half-planar infinite triangulation should be understood as the proper embedding of an infinite but locally finite connected graph in the upper half-plane $\mathbb{H}$ such that all the faces are finite and have degree three, while the boundary is isomorphic to $\Z$. The probability measure $\P_{\infty,\infty}$ enjoys a re-rooting invariance property, in the sense that it is preserved under the natural shift operation for the root edge along the boundary. This result extends to the quadrangular case, for which an alternative construction is also provided in~\cite[Section 6.1]{curien_uniform_2015}. 

\begin{figure}[!ht]
\centering
\includegraphics[scale=1.3]{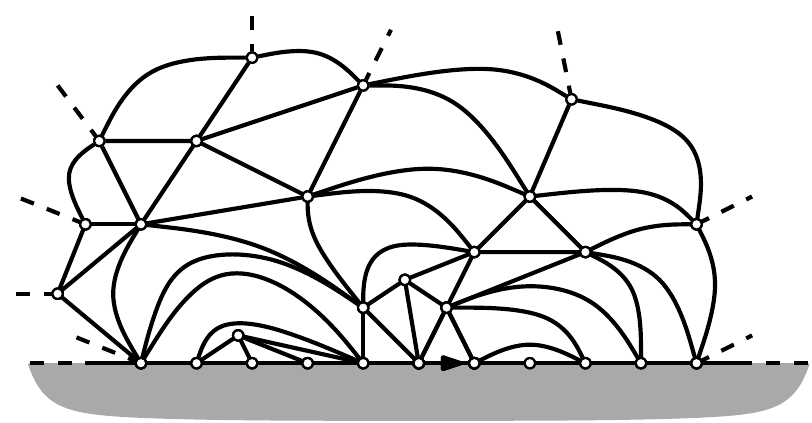}
\caption{An embedding of the $\UIHPT$ in the upper half-plane.}
\label{fig:UIHPTView}
\end{figure}

\smallskip

The properties of the $\UIHPT$ are best understood using a probability measure supported on triangulations with fixed perimeter called the \textit{Boltzmann measure} (or \textit{free measure} in~\cite{angel_growth_2003}). Let $m\geq 2$ and introduce the \textit{partition function}
\begin{equation}\label{partfct}
  Z_m:=\sum_{n\in\Z_+}{\#\Ms^\triangle_{n,m}\left(\frac{2}{27}\right)^{n}}.
\end{equation} This is the generating function of triangulations with a simple boundary of perimeter $m$ and (critical) weight $2/27$ per inner vertex. For further use, recall the asymptotics~\cite[Section 2.2]{angel_percolations_2015} \begin{equation}\label{asymptZ}
	Z_m \underset{m \rightarrow \infty}{\sim} \kappa m^{-5/2}9^m, \quad (\kappa >0).
\end{equation} The Boltzmann measure on the set $\Ms^\triangle_m$ of triangulations with a simple boundary of perimeter $m$ is defined by
\begin{equation}
\mathbb{W}_m(\m):=\frac{1}{Z_m}\left(\frac{2}{27}\right)^{n}, \quad \m\in \Ms^\triangle_{n,m}. 
\end{equation}This object is of particular importance because it satisfies a branching property, that we will identify on the $\UIHPT$ as the \textit{spatial} (or \textit{domain}) \textit{Markov property}. The tight relations between Boltzmann triangulations and the $\UIHPT$ are no coincidence, since the latter can be obtained as a limit of the first when the perimeter goes to infinity. Precisely,~\cite[Theorem 2.1]{angel_scaling_2004} states that \[\mathbb{W}_m \underset{m \rightarrow \infty}{\Longrightarrow}  \P_{\infty,\infty}\] in the sense of weak convergence, for the local topology.

\bigskip

\noindent\textbf{The spatial Markov property.} In this paragraph, we detail the so-called \textit{peeling} technique introduced by Angel. The general idea is to suppose the whole map unknown and to reveal its faces one after another. To do so, consider a map $M$ distributed as the $\UIHPT$ and the face $\A$ of $M$ incident to the root. To \textit{reveal} or \textit{peel} the face $\A$ means that we suppose the whole map unknown and work conditionally on the \textit{configuration} of this face (see the definition below). We now consider the map $M\setminus\A$, obtained by removing the root edge of $M$ (in that sense, we also say that we peel the root edge). This map has at most one cut-vertex on the boundary, which defines sub-maps that we call the (connected) components of $M\setminus\A$.

The spatial Markov property has been introduced in~\cite[Theorem 2.2]{angel_scaling_2004} and states as follows: $M\setminus\A$ has a unique infinite component $M'$ with law $\P_{\infty,\infty}$, and at most one finite component $\tilde{M}$ with law $\mathbb{W}_m$ (and perimeter $m\geq 2$ given by the configuration of the face $\A$). Moreover, $\tilde{M}$ is independent of $M'$. This is illustrated in Figure \ref{fig:SpatialMP}.

\begin{figure}[!ht]
\centering
\includegraphics[scale=1.3]{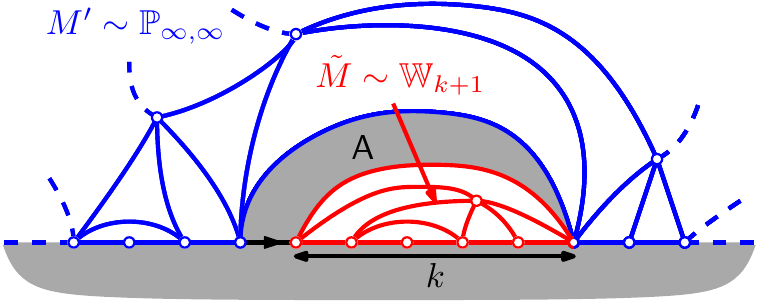}
\caption{The spatial Markov property.}
\label{fig:SpatialMP}
\end{figure}

The peeling technique is extended to a \textit{peeling process} by successively revealing a new face in the unique infinite component of the map deprived of the discovered face. The spatial Markov property ensures that the configuration of the revealed face has the same distribution at each step, while the re-rooting invariance of the $\UIHPT$ allows a complete freedom on the choice of the next edge to peel on the boundary (as long as it does not depend on the unrevealed part of the map). This is the cornerstone to study percolation on uniform infinite half-planar maps, see~\cite{angel_scaling_2004,angel_percolations_2015,richier_universal_2015}. Note that random half-planar triangulations satisfying the spatial Markov property and translation invariance have been classified in~\cite{angel_classification_2015}.

\smallskip

The peeling technique enlightens the crucial role played by the possible configurations for the face $\A$ incident to the root in the $\UIHPT$ and their probabilities. Let us introduce some notation. On the one hand, some edges of $\A$, called \textit{exposed}, belong to the boundary of the infinite component of $M\setminus\A$. On the other hand, some edges of the boundary, called \textit{swallowed}, may be enclosed in a finite component of $M\setminus\A$. The number of exposed and swallowed edges are denoted by $\Ec$ and $\Rc$. We may use the notations $\Rc_l$ and $\Rc_r$ for the number of swallowed edges on the left and on the right of the root edge. The probabilities of the two possible configurations for the face incident to the root edge in the $\UIHPT$ are provided in~\cite[Section 2.3.1]{angel_percolations_2015}:

\begin{enumerate}

\item The third vertex of $\A$ is an inner vertex $(\Ec=2,\Rc=0)$ with probability $q_{-1}=2/3$.

\item The third vertex of $\A$ is on the boundary of the map, $k\in\N$ edges on the left (or right) of the root $(\Ec=1,\Rc=k)$ with probability $q_{k}=Z_{k+1}9^{-k}$. 

\end{enumerate}

\begin{figure}[!ht]
\centering
\includegraphics[scale=1.3]{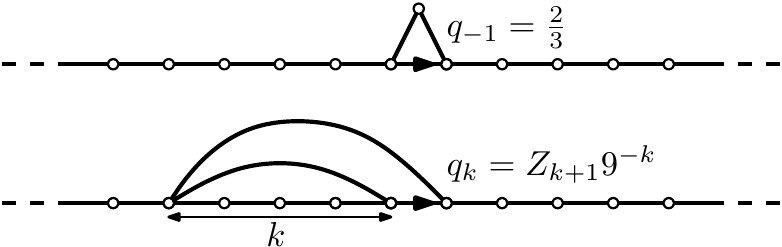}
 \caption{The configurations of the triangle incident to the root (up to symmetries).}
\label{fig:configT}
\end{figure} Note that we have $\sum_{k\in\N}{q_k}=1/6$ and $\sum_{k\in\N}{kq_k}=1/3$. By convention, we set $q_0=0$.

\bigskip

\noindent\textbf{Percolation.} We now equip the $\UIHPT$ with a Bernoulli site percolation model, meaning that every site is open (coloured black, taking value 1) with probability $p$ and closed (coloured white, taking value 0) otherwise, independently of every other site. This colouring convention is identical to that of~\cite{angel_growth_2003}, but opposed to that of~\cite{angel_percolations_2015}. Let us define the probability measure $\Pb_p$ induced by this model. For a given map $\m\in \M$, we define a measure on the set of colourings of $\m$ by
\[\mathcal{P}_p^{V(\m)}:=(p\delta_1+(1-p)\delta_0)^{\otimes {V(\m)}},\] where $V(\m)$ is the set of vertices of $\m$. Then, $\Pb_p$ is the measure on the set of coloured (or percolated) maps $\left\lbrace(\m,\mathrm{c}) : \m\in \M, \mathrm{c}\in\{0,1\}^{V(\m)}\right\rbrace$ defined by
\[\Pb_p(\mathrm{d}\m\mathrm{d}\mathrm{c}):=\P_{\infty,\infty}(\mathrm{d}\m)\mathcal{P}_p^{V(\m)}(\mathrm{d}\mathrm{c}).\] In other words, the map has the law of the $\UIHPT$ and conditionally on it, the colouring is a Bernoulli percolation with parameter $p$. We emphasize that this probability measure is annealed, so that conditioning on events depending only on the colouring may still affect the geometry of the underlying random lattice. We implicitly extend the definition of the local topology to coloured maps. In what follows, we will work conditionally on the colouring of the boundary of the map, which we call the \textit{boundary condition}. 

The (open) \textit{percolation cluster} of a vertex $v$ of the map is the set of open vertices connected to $v$ by an open path, together with the edges connecting them. If $\C$ is the open percolation cluster of the origin and $\vert \C \vert$ its number of vertices, the percolation probability is
\[\Theta(p):=\Pb_p(\vert \C \vert=\infty), \quad p\in[0,1].\] A coupling argument shows that $\Theta$ is nondecreasing, so that there exists a critical point $p_c$, called the percolation threshold, such that $\Theta(p)>0$ if $p>p_c$ and $\Theta(p)=0$ if $p<p_c$. Under the natural boundary condition that all the vertices of the boundary are closed except the origin vertex which is open, Angel proved in~\cite{angel_scaling_2004} (see also~\cite[Theorem 5]{angel_percolations_2015}) that
\[p_c=\frac{1}{2}.\] We will regularly work at criticality and use the notation $\Pb$ instead of $\Pb_{p_c}$. We slightly abuse notation here and use $\Pb$ for several boundary conditions. For every $m\geq 2$, we also denote by $\mathbf{W}_m$ the measure induced by the Bernoulli site percolation model with parameter $1/2$ on a Boltzmann triangulation with distribution $\mathbb{W}_m$ (and a boundary condition to be defined).

We end with some definition. The \textit{hull} $\Hc$ of a percolation cluster $\C$ is the coloured triangulation with a boundary obtained by filling in the finite holes of $\C$. In other words, $\Hc$ is the union of $\C$ and the finite connected components of its complement in the whole map, see Figure \ref{fig:Hull} for an example. The unique infinite connected component of the map deprived of $\C$ is called the exterior. The boundary of $\Hc$ is formed by the vertices and edges of the hull that are adjacent to the exterior or to the boundary of the map. The root edge of $\Hc$ is the rightmost edge of $\Hc$ whose origin is $v$.

\begin{figure}[!ht]
\centering
\includegraphics[scale=1.3]{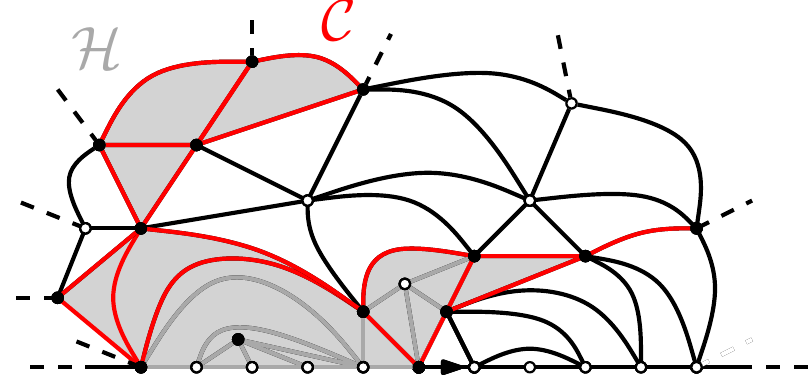}
\caption{The percolation cluster of the origin and its hull.}
\label{fig:Hull}
\end{figure}

\subsection{Random trees and looptrees}\label{sec:TreesAndLooptrees}

\noindent\textbf{Plane trees.} We use the formalism of~\cite{neveu_arbres_1986}. A finite plane tree $\tr$ as a finite subset of \[\mathcal{U}:=\bigcup_{n=0}^{\infty}{\mathbb{N}^n}\] satisfying the following properties. First, the empty word $\emptyset$ is an element of $\tr$ ($\emptyset$ is the root of $\tr$). Next, for every $n\in\N$, if $v=(v_1,\ldots,v_n)\in \tr$, then $(v_1,\ldots v_{n-1})\in \tr$ ($(v_1,\ldots v_{n-1})$ is the parent of $v$ in $\tr$). Finally, for every $v=(v_1,\ldots,v_n)\in \tr$, there exists $k_v(\tr)\in\Z_+$ such that $(v_1,\ldots,v_n,j)\in \tr$ iff $1\leq j \leq k_v(\tr)$ ($k_v(\tr)$ is the number of children of $v$ in $\tr$).

For every $v=(v_1,\ldots,v_n)\in \tr$, $\vert v \vert=n$ is the height of $v$ in $\tr$. Vertices of $\tr$ at even height are called white, and those at odd height are called black. We let $\tr_{\circ}$ and $\tr_{\bullet}$ denote the corresponding sets of vertices. The set of finite plane trees is denoted by $\T_f$. 

We will also deal with the set $\T_\textup{loc}$ of \textit{locally finite} plane trees which is the completion of $\T_f$ with respect to the local topology. Equivalently, the set $\T_\textup{loc}$ is obtained by extending the definition of $\T_f$ to infinite trees whose vertices have finite degree ($k_v(\tr)<\infty$ for every $v\in\tr$). A \textit{spine} in a tree $\tr$ is an infinite sequence $\{s_k : k\in\Z_+\}$ of vertices of $\tr$ such that $s_0=\emptyset$ and for every $k\in\Z_+$, $s_k$ is the parent of $s_{k+1}$.  

\bigskip

\noindent\textbf{Looptrees.} Looptrees have been introduced in~\cite{curien_random_2014,curien_percolation_2014} in order to study the boundary of percolation hulls in the $\UIPT$. Here, we closely follow the presentation of~\cite{curien_percolation_2014}. Let us start with a formal definition. A (finite) \textit{looptree} is a finite planar map whose edges are incident to two distinct faces, one of them being the root face (such a map is also called \textit{edge-outerplanar}). The set of finite looptrees is denoted by $\LT_f$. Informally, a looptree is a collection of simple cycles glued along a tree structure. Consistently, there is a way to construct looptrees from trees and conversely, that we now describe.

To every plane tree $\tr\in\T_f$ we associate a looptree $\lt:=\Loop(\tr)$ as follows. Vertices of $\lt$ are vertices of $\tr_\circ$, and around each vertex $u\in\tr_\bullet$, we connect the incident (white) vertices with edges in cyclic order. The looptree $\lt$ is the planar map obtained by discarding the edges of $\tr$ and its black vertices. The root edge of $\lt$ connects the origin of $\tr$ to the last child of its first offspring in $\tr$. The inverse mapping associates to a looptree $\lt\in\LT_f$ the plane tree $\tr:=\Tree(\lt)$ called the \textit{tree of components} in~\cite{curien_percolation_2014}. It is obtained by first adding an extra vertex into each inner face (or \textit{loop}) of $\lt$, and then connecting this vertex by an edge to all the vertices of the corresponding face (the edges of $\lt$ are discarded). The plane tree $\tr$ is rooted at the oriented edge between the origin of $\lt$ and the vertex lying inside the face on the left of the root edge. Our definition of looptree as well as the mappings $\Tree$ and $\Loop$ slightly differ from \cite{curien_random_2014,curien_percolation_2014}. In particular, we allow several loops to be glued at the same vertex. See Figure \ref{fig:LoopAndTree} for an illustration.

\begin{figure}[!ht]
\centering
\includegraphics[scale=1.1]{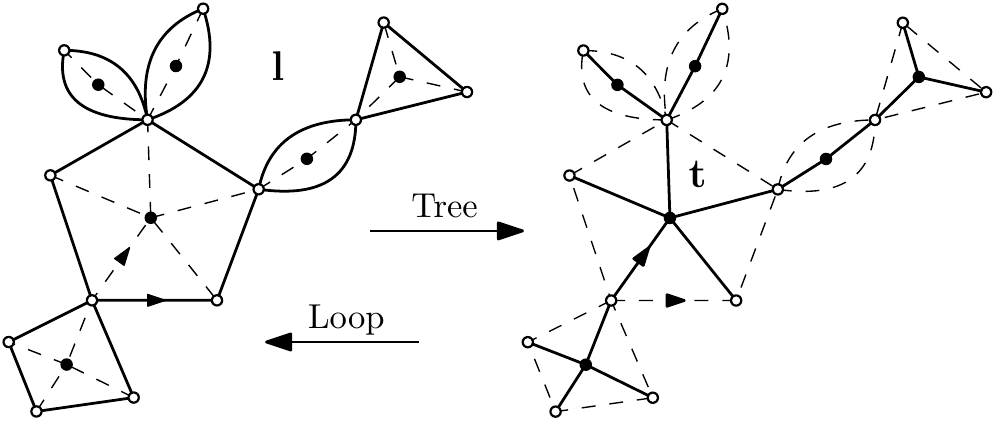}
\caption{The mappings $\Tree$ and $\Loop$.}
\label{fig:LoopAndTree}
\end{figure}

\smallskip

We now extend our definition to infinite looptrees. Formally, an infinite looptree is an edge-outerplanar map with a unique infinite face which is the root face. The set of finite and infinite looptrees is denoted by $\LT$. The application $\Loop$ extends to any locally finite plane tree $\tr\in\T_\textup{loc}$ by using the consistent sequence of planar maps $\{\Loop(\B_{2R}(\tr)) : R\in \Z_+\}$. When $\tr$ is infinite and one-ended (i.e., with a unique spine), $\Loop(\tr)$ is an infinite looptree. The inverse mapping $\Tree$ also extends to any infinite looptree $\lt\in \LT$ by using the consistent sequence of planar maps $\{\Tree(\B'_{R}(\lt)) : R \in \Z_+\}$, where $\B'_R(\lt)$ is the finite looptree made of all the internal faces of $\lt$ having a vertex at distance less than $R$ from the origin. Note that the mappings $\Tree$ and $\Loop$ are both continuous with respect to the local topology. 

\begin{Rk}\label{rk:GluingMap}
	Every internal face of a looptree $\lt$ inherits a rooting from the branching structure. Namely, the root of a loop is the edge whose origin is the closest to the origin of $\lt$, and such that the external face lies on its right. As a consequence, for every loop $l$ of perimeter $k$ in $\lt$ and every triangulation with a simple boundary $\m \in \Ms^\triangle_k$, the gluing of $\m$ in the loop $l$ is the operation which consists in identifying the boundary of $\m$ with the edges of $l$ (with the convention that the root edges are identified).
\end{Rk}

Let us now make use of these definitions to describe the branching structure of triangulations with a boundary. Following \cite[Section 2.2]{curien_uniform_2015}, we decompose any (possibly infinite) triangulation $\m$ with a (general) boundary into its \textit{irreducible components}, that is triangulations with a simple boundary attached through cut-vertices (or pinch-points) of $\partial \m$. We also define the so-called \textit{scooped-out triangulation} $\Scoop(\m)$, which is the planar map obtained by taking the boundary $\partial\m$ of $\m$ and duplicating the edges whose sides both belong to the root face of $\m$. When $\m$ has no irreducible component with an infinite boundary, $\Scoop(\m)$ is an infinite looptree and we call \textit{tree of components} of $\m$ the (locally finite) plane tree $\Treeb(\m):=\Tree(\Scoop(\m))$. See Figure \ref{fig:ScoopTreeToLoop} for an example.

\begin{figure}[!ht]
\centering
\includegraphics[scale=1.2]{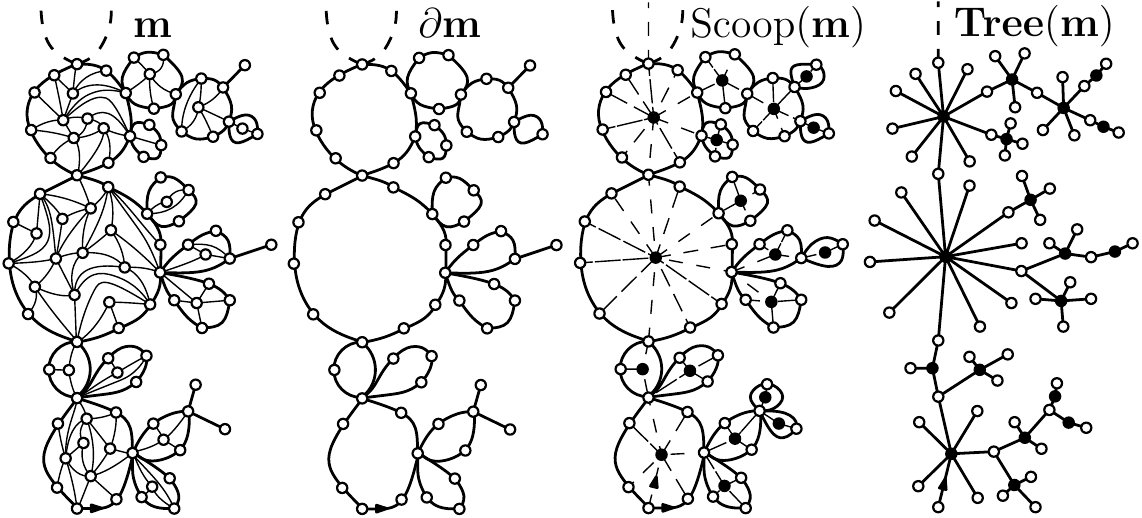}
\caption{A triangulation with a boundary $\m$, its scooped-out map $\Scoop(\m)$ and the associated tree of components $\Treeb(\m)$.}
\label{fig:ScoopTreeToLoop}
\end{figure}

\begin{Rk}The tree of components $\Treeb(\m)$ is also obtained by adding a vertex into each internal face of $\Scoop(\m)$, and connecting this vertex to all the vertices of the associated face (the edges of $\Scoop(\m)$ are erased). This definition extends to any triangulation with a boundary $\m\in\M^\triangle$. However, $\Treeb(\m)$ is not a locally finite plane tree in general, but an acyclic connected graph (with possibly vertices of infinite degree). In what follows, we only deal with triangulations $\m$ such that $\Treeb(\m)$ is locally finite and one-ended.
\end{Rk}

Let $\m\in\M^\triangle$. By construction, to every vertex $v$ at odd height in $\Treeb(\m)$ with degree $k\geq 2$ corresponds an internal face of $\Scoop(\m)$ with the same degree. Moreover, to this face is associated a triangulation $\m_v\in\Ms^\triangle_k$ with a simple boundary of perimeter $k$, which is the irreducible component of $\m$ delimited by the face. By convention, the root edge of $\m_v$ is the edge whose origin is the closest to that of $\m$, and such that the root face of $\m$ lies on its right. Then, $\m$ is recovered from $\Scoop(\m)$ by gluing the triangulation $\m_v$ in the associated face of $\Scoop(\m)$, as explained in Remark \ref{rk:GluingMap}. This results in an application
\[\Phi: \m  \mapsto \left( \tr=\Treeb(\m), \{\m_v : v\in \tr_\bullet\}  \right)\] that associates to every triangulation with a general boundary $\m\in\M^\triangle$ its tree of components $\tr=\Treeb(\m)$ together with a collection $\{\m_v : v\in \tr_\bullet\}$ of triangulations with a simple boundary having respective perimeter $\deg(v)$ attached to vertices at odd height of $\tr$, which are the irreducible components of $\m$. Note that for $\tr\in\T_\textup{loc}$, the inverse mapping $\Phi^{-1}$ that consists in filling in the loops of $\Loop(\tr)$ with the collection $\{\m_v : v\in \tr_\bullet\}$ is continuous with respect to the natural topology.

\bigskip

\noindent\textbf{Multi-type Galton-Watson trees.} Let $\nuw$ and $\nub$ be probability measures on $\Z_+$. A random plane tree is an (alternated two-type) Galton-Watson tree with offspring distribution $(\nuw,\nub)$ if all the vertices at even (resp.\ odd) height have offspring distribution $\nuw$ (resp.\ $\nub$) all independently of each other. From now on, we assume that the pair $(\nuw,\nub)$ is critical (i.e.\ its mean vector $(m_\circ,m_\bullet)$ satisfies $m_\circ m_\bullet =1$). Then, the law $\GWnn$ of such a tree is characterized by
\[\GWnn(\tr)=\prod_{v\in \tr_{\circ}}{\nuw\left(k_v(\tr)\right)}\prod_{v\in \tr_{\bullet}}{\nub\left(k_v(\tr)\right)}, \quad \tr\in \T_f.\] The construction of \textit{Kesten's tree}~\cite{kesten_subdiffusive_1986,lyons_probability_2016} has been generalized in~\cite[Theorem 3.1]{stephenson_local_2016} to multi-type Galton-Watson trees conditioned to survive as follows. Assume that the critical pair $(\nuw,\nub)$ satisfies $\GWnn(\{\vert \tr \vert =n\})>0$ for every $n\in\Z_+$. Let $T_n$ be a plane tree with distribution $\GWnn$ conditioned to have $n$ vertices. Then, in the sense of weak convergence, for the local topology 
\[T_n \underset{n \rightarrow \infty}{\overset{(d)}{\longrightarrow}}  \Tinf.\] The random infinite plane tree $\Tinf=\Tinf(\nuw,\nub)$ is a multi-type version of Kesten's tree, whose law is denoted by $\GWnn^{(\infty)}$. Let us describe the alternative construction of $\Tinf$ as explained in~\cite{stephenson_local_2016}. For every probability measure $\nu$ on $\Z_+$ with mean $m\in(0,\infty)$, the size-biased distribution $\bar{\nu}$ reads 
\[\bar{\nu}(k):=\frac{k\nu(k)}{m}, \quad k\in\Z_+.\] The tree $\Tinf$ has a.s. a unique spine, in which white vertices have offspring distribution $\bar{\nu}_\circ$ while black vertices have offspring distribution $\bar{\nu}_{\bullet}$. Each vertex of the spine has a unique child in the spine, chosen uniformly at random among the offspring. Out of the spine, white and black vertices have offspring distribution $\nuw$ and $\nub$ respectively, and the number of offspring are all independent. 

We will use two variants of $\Tinf$, which are obtained by discarding all the vertices and edges on the left (resp.\ right) of the spine, excluding the children of black vertices of the spine. Their distributions are denoted by $\GWnn^{(\infty,l)}$ and $\GWnn^{(\infty,r)}$ respectively. The infinite looptree $\Linf(\nuw,\nub):=\Loop(\Tinf)$ plays a special role in the following, and may be called \textit{Kesten's looptree} with offspring distribution $(\nuw,\nub)$. It has a unique spine of finite loops (a collection of (incident) internal faces that all disconnect the root from infinity), associated to black vertices of the spine of $\Tinf$.

\subsection{Statement of the results}\label{sec:StatementResults}

\noindent\textbf{Uniform infinite necklace.} A necklace is a map that was first introduced in~\cite{borot_recursive_2012} for studying the $O(n)$ model on random maps, see also~\cite{curien_percolation_2014}. Formally, an infinite necklace is a locally finite triangulation of the upper half-plane $\mathbb{H}$ with no inner vertex.

Consider the graph of $\Z$ embedded in the plane, and rooted at the oriented edge $(0,1)$. Let $(z_i : i\in\N)$ be a sequence of independent random variables with Bernoulli distribution of parameter $1/2$, and define the simple random walk
\[S_k:=\sum_{i=1}^{k}{z_i}, \quad k\in\N.\] The uniform infinite necklace is the random rooted map obtained from $\Z$ by adding the set of edges $\left\lbrace \left( -S_k,k+1-S_k \right) : k\in\N \right\rbrace$ in a non-crossing manner.  It is a.s.\ an infinite necklace in the aforementioned sense, and can also be interpreted as a gluing of triangles along their sides, with the tip oriented to the left or to the right equiprobably and independently. Its distribution is denoted by $\mathsf{UN}(\infty,\infty)$. See Figure \ref{fig:Necklace} for an illustration.

\begin{figure}[ht]
\centering
\includegraphics[scale=1.3]{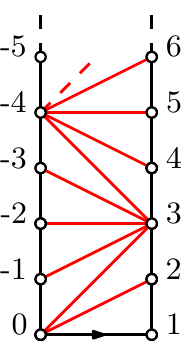}
\caption{The uniform infinite necklace.}
\label{fig:Necklace}
\end{figure}

\smallskip

In the next part, we will perform gluing operations of triangulations with a boundary along infinite necklaces. Let $\m$ and $\m'$ be triangulations with an infinite boundary. Let $\{e_i : i\in\N\}$ be the sequence of half-edges of the root face of $\m$ on the right of the origin vertex, listed in contour order. Similarly, the left boundary of $\m'$ defines the sequence of half-edges $\{e'_i : i\in\N\}$. Let $\mathbf{n}$ be an infinite necklace, with a boundary identified to $\Z$. The gluing $\Psi_\mathbf{n}(\m,\m')$ of $\m$ and $\m'$ along $\mathbf{n}$ is the map defined as follows. For every $i\in\N$, we identify the half-edge $(i,i+1)$ of $\mathbf{n}$ with $e'_i$, and the half-edge $(-i,-i+1)$ of $\mathbf{n}$ with $e_i$. The root edge of $\Psi_\mathbf{n}(\m,\m')$ is the root edge of $\mathbf{n}$. An example is given in Figure \ref{fig:DecompositionUIHPT}. 

Note that $\Psi_\mathbf{n}(\m,\m')$ is still a triangulation with an infinite boundary. In particular, our construction extends to the gluing $\Psi_{(\mathbf{n},\mathbf{n}')}(\m,\m',\m'')$ of three rooted triangulations with an infinite boundary $\m$, $\m'$ and $\m''$ along the pair of infinite necklaces $(\mathbf{n},\mathbf{n}')$. To do so, first define the triangulation with an infinite boundary $\m^*:=\Psi_\mathbf{n}(\m,\m')$, but keep the root edge of $\m'$ as the root edge of $\m^*$. Then, set $\Psi_{(\mathbf{n},\mathbf{n}')}(\m,\m',\m''):=\Psi_{\mathbf{n}'}(\m^*,\m'')$. See Figure \ref{fig:DecompositionIIC} for an example. These gluing operations are continuous with respect to the local topology.

\bigskip

\noindent\textbf{Decomposition of the $\normalfont{\UIHPT}$.} We consider the $\UIHPT$ decorated with a critical percolation model, and work conditionally on the ``Black-White" boundary condition of Figure \ref{fig:InitialColouring}. We let $\Hb$ and $\Hw$ be the hulls of the percolation clusters of the origin and the target of the root. We denote by $\Treeb(\Hb)$ and $\Treeb(\Hw)$ their respective tree of components, and by 
\[\left\lbrace M_v^b : v\in \Treeb(\Hb)_\bullet \right\rbrace \quad \text{and} \quad \left\lbrace M_v^w : v\in \Treeb(\Hw)_\bullet \right\rbrace\] their irreducible components (i.e.\ the second components of $\Phi(\Hb)$ and $\Phi(\Hw)$). The boundary conditions of the irreducible components are determined by the hull. We define the probability measures $\muw$ and $\mub$ by
\begin{equation}\label{eqn:DefinitionMuwMub}
	\muw(k):=\frac{2}{3}\left(\frac{1}{3}\right)^k \quad \text{and} \quad \mub(k):=6q_k, \quad k\in\Z_+.  
\end{equation}

\begin{figure}[!ht]
\centering
\includegraphics[scale=1.6]{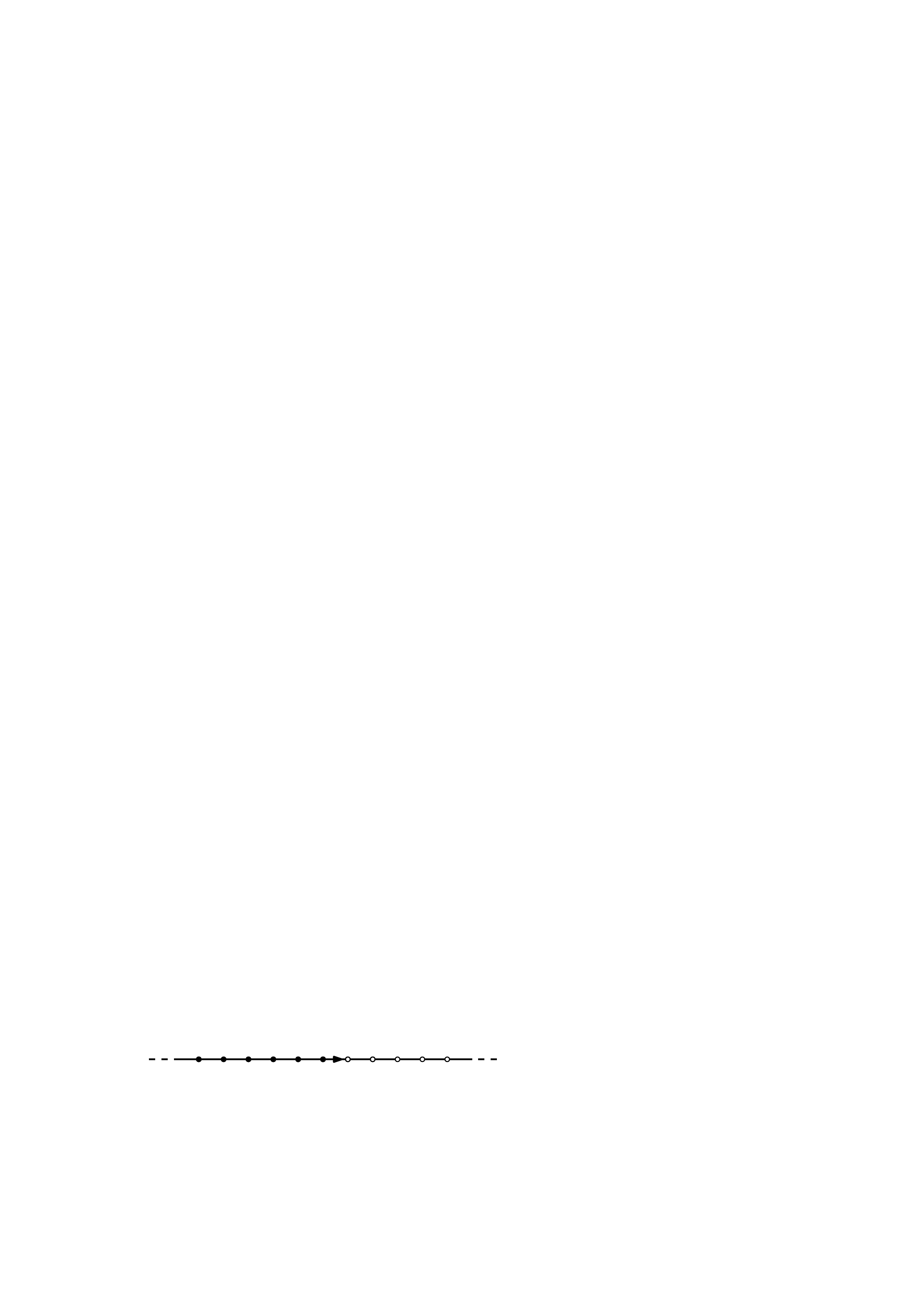}
\caption{The ``Black-White" boundary condition.}
\label{fig:InitialColouring}
\end{figure}

\begin{Th}\label{thm:DecompositionTheorem}In the critical Bernoulli percolation model on the $\UIHPT$ with ``Black-White" boundary condition:

\begin{itemize}
	\item The trees of components $\Treeb(\Hb)$ and $\Treeb(\Hw)$ are independent with respective distribution $\mathsf{GW}^{(\infty,l)}_{\muw,\mub}$ and $\mathsf{GW}^{(\infty,r)}_{\muw,\mub}$.
	\item Conditionally on $\Treeb(\Hb)$ and $\Treeb(\Hw)$, the irreducible components $\{M^b_v : v \in \Treeb(\Hb)_\bullet\}$ and $\{M^w_v : v \in \Treeb(\Hw)_\bullet\}$ are independent critically percolated Boltzmann triangulations with a simple boundary and respective distribution $\mathbf{W}_{\deg(v)}$.
	
\end{itemize} Finally, the $\UIHPT$ is recovered as the gluing $\Psi_\mathbf{N}(\Hb,\Hw)$ of $\Hb$ and $\Hw$ along a uniform infinite necklace $\mathbf{N}$ with distribution $\mathsf{UN}(\infty,\infty)$ independent of $(\Hb,\Hw)$.
\end{Th}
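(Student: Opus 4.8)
The plan is to run a peeling exploration of the $\UIHPT$ along the percolation interface. Starting from the root edge, which separates the black vertex~$0$ from the white vertex~$1$, at each step peel the edge of the unexplored region whose two endpoints are the currently last revealed black and white vertices. By re-rooting invariance and the spatial Markov property, conditionally on the past the revealed triangle has the standard configuration of Figure~\ref{fig:configT}: its apex is a fresh inner vertex with probability $q_{-1}=2/3$, or it lies on the left (black) side $k\ge 1$ edges away with probability $q_k$, or on the right (white) side $k$ edges away with probability $q_k$; in the latter two cases the enclosed finite component has perimeter $k+1$ and, by the spatial Markov property, is a Boltzmann triangulation $\mathbb{W}_{k+1}$ independent of everything else, which is absorbed into the black hull $\Hb$ (left) or the white hull $\Hw$ (right). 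Since the underlying map is independent of the colouring and the peeled edge is chosen measurably with respect to the already revealed (coloured) region, the colour of each fresh apex is an independent fair coin. Thus the exploration is encoded by an i.i.d.\ sequence of steps with values in $\{\text{inner-black},\text{inner-white},\text{swallow-left-}k,\text{swallow-right-}k\}$ and respective probabilities $1/3$, $1/3$, $q_k$, $q_k$ (these sum to~$1$ since $\sum_{k\ge1}q_k=1/6$), decorated with the independent Boltzmann fillings.

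Next I would read off the three objects of the statement and their laws. Conditioning a single step on producing a black component gives perimeter $k+1$ with probability $q_k/\sum_j q_j=6q_k=\mub(k)$, the offspring law of black vertices of $\Treeb(\Hb)$; the geometric law $\muw(k)=\tfrac23(\tfrac13)^k$ for white vertices emerges from the number of swallowing events charged to a fixed boundary vertex of $\partial\Hb$ before the interface leaves it for good (among the steps affecting the black side, a swallowing occurs with conditional probability $\tfrac13$). This is exactly the spine-and-bushes description of the multi-type Kesten tree $\Tinf(\muw,\mub)$: because the interface runs forever along the infinite black boundary ray, the nested boundary vertices and loops it winds around form a semi-infinite spine carrying size-biased offspring $\bar\muw,\bar\mub$ (the size-biasing being forced by conditioning the exploration to pass beyond the vertex), while the sub-maps branching off are fresh and unconditioned; keeping only the part on the $\Hb$-side yields the variant $\GWmm^{(\infty,l)}$, and symmetrically $\GWmm^{(\infty,r)}$ for $\Treeb(\Hw)$. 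One can alternatively obtain this identification by running the same interface exploration in a critically percolated Boltzmann triangulation $\mathbf{W}_m$, where $\Treeb(\Hb)$ is a genuine two-type Galton--Watson tree conditioned on its size, and then letting $m\to\infty$ using $\mathbb{W}_m\Rightarrow\P_{\infty,\infty}$ and the convergence of conditioned multi-type Galton--Watson trees to $\Tinf$ recalled above. For the necklace, the key observation is that \emph{every} step produces exactly one triangle of the corridor between the two hulls, and that this triangle points towards $\Hb$ precisely when the step is inner-black or swallow-right, an event of probability $\tfrac13+\tfrac16=\tfrac12$; hence the tips are i.i.d.\ fair, i.e.\ $\mathbf{N}$ has law $\mathsf{UN}(\infty,\infty)$.

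I would then make the decomposition rigorous by a finite-step analysis and a limit. After $n$ peeling steps the revealed region splits canonically as a finite sub-looptree of $\partial\Hb$ with its Boltzmann fillings, a finite sub-looptree of $\partial\Hw$ with its fillings, the segment of $n$ corridor triangles produced so far, and a single remaining unexplored component which, by the spatial Markov property, is a $\UIHPT$ with Black--White boundary independent of all the revealed data. Because the Black--White boundary condition makes both the black cluster of~$0$ and the white cluster of~$1$ infinite, the exploration never terminates and the revealed pieces exhaust $\Hb$, $\Hw$ and the whole necklace $\mathbf{N}$; continuity of the maps $\Phi^{-1}$, $\Loop$ and $\Psi$ with respect to the local topology then upgrades the step-by-step identities into the distributional identity $\UIHPT=\Psi_{\mathbf{N}}(\Hb,\Hw)$. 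The mutual independence of $(\Hb,\Hw,\mathbf{N})$ follows from the conditional-independence structure of the peeling: conditionally on the sequence of tip directions (i.e.\ on $\mathbf{N}$), the loops charged to $\Hb$ and those charged to $\Hw$ are drawn independently, and their Boltzmann fillings are independent by the spatial Markov property, so unwinding the recursion along the spine exhibits $\Hb$, $\Hw$ and $\mathbf{N}$ as functions of disjoint, independent portions of the exploration data.

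The main obstacle is the precise identification of the laws of $\Treeb(\Hb)$ and $\Treeb(\Hw)$: one must check that the interface exploration genuinely reproduces the size-biased spine construction of $\Tinf(\muw,\mub)$ --- that the ``spine'' read off from the interface (the nested boundary vertices and swallowed loops separating the root from infinity) carries exactly the offspring laws $\bar\muw$, $\bar\mub$, and that off the spine the sub-explorations are fresh instances of $\GWmm$ --- and that this spine decomposition is compatible with the disjointness needed for the independence statement. Carrying the bookkeeping of which corridor triangle is incident to which edge of $\partial\Hb$ and of $\partial\Hw$, so that both the gluing identity and the decoupling become visible, is where the care lies; the remaining ingredients are one-step computations with the probabilities $q_{-1}$, $q_k$ and standard continuity arguments for the local topology.
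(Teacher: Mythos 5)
Your proposal follows the same line as the paper: peel along the percolation interface, read off the two hull boundaries from the exploration process, and identify the corridor of triangles as a uniform necklace from the i.i.d.\ tip directions. The one-step probabilities and the necklace law are correctly computed. But there are two genuine gaps, and they are exactly the parts of the argument the paper devotes its machinery to.

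First, the identification of $\Treeb(\Hb)$ as $\GWmm^{(\infty,l)}$ is not proved; you flag it yourself as ``the main obstacle'' and describe what would have to be checked, but the sketch (``the size-biasing being forced by conditioning the exploration to pass beyond the vertex'') is not an argument. In the paper this is Proposition~\ref{prp:BoundaryPercolationHull}, which matches the revealed region step-by-step to the quotient map $\Lt_k$ of a random-walk contour process and then invokes Proposition~\ref{prp:ComponentTree}; the key technical input is Lemma~\ref{lem:RandomWalkLastJumpTimeReverse}, which computes the overshoot distribution at the first passage into $(-\infty,0)$ and shows that the size of a swallowed loop on the spine has law $\bar{\nu}_\bullet$ with the spine child uniformly placed, while the white spine offspring come out as a shifted geometric that must be recognised as a uniform on $\{1,\dots,X\}$ with $X\sim\bar{\nu}_\circ$. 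None of this is in your proposal, and your heuristic that $\muw$ is ``the number of swallowings charged to a fixed boundary vertex'' ignores that the spine vertices carry the size-biased, not the plain, offspring law. The alternative you mention --- running the exploration in $\mathbf{W}_m$ and letting $m\to\infty$ --- is a different route from the paper and would itself require establishing the GW structure of the scooped-out boundary in the finite Boltzmann setting, which is not done. Second, the claim that the exploration reveals the whole map (so that $\Psi_{\mathbf{N}}(\Hb,\Hw)$ really is the $\UIHPT$ in the local topology) is asserted but not proved: an unbounded exploration can still leave balls of bounded radius around the origin incomplete. The paper's proof of Theorem~\ref{thm:DecompositionTheorem} is devoted to exactly this, building the quotient maps $M'_{\phi(n)}$ from the cut-points along the two spines and showing $d_n(\rho,\partial M_{\phi(n)})\to\infty$ via recurrence of $B$ and $W$; your appeal to ``continuity of $\Phi^{-1}$, $\Loop$ and $\Psi$'' does not replace this.
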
 

\begin{Rk} The result of Theorem \ref{thm:DecompositionTheorem} can be seen as a discrete counterpart to~\cite[Theorem 1.16-1.17]{duplantier_liouville_2014}, as we will discuss in Section \ref{sec:ScalingLimits}. It could also be stated without reference to percolation: By discarding the colouring of the vertices, we obtain a decomposition of the $\UIHPT$ into two independent looptrees filled in with Boltzmann triangulations and glued along a uniform necklace. An illustration is provided in Figure \ref{fig:DecompositionUIHPT}.
 \end{Rk}

\begin{figure}[!ht]
\centering
\includegraphics[scale=1.2]{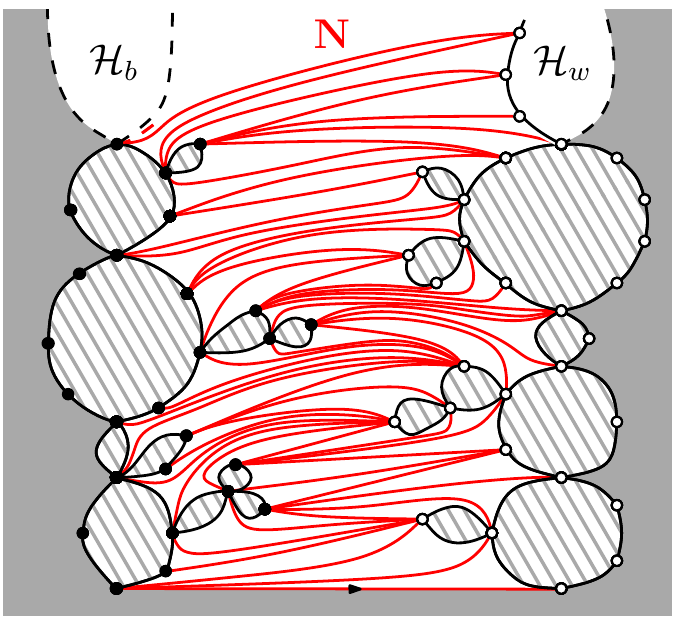}
\caption{The decomposition of the $\UIHPT$ into its percolation hulls and the uniform infinite necklace. The hatched areas are filled-in with independent critically percolated Boltzmann triangulations of the given boundary length.}
\label{fig:DecompositionUIHPT}
\end{figure}

\noindent\textbf{The incipient infinite cluster.} We now consider the $\UIHPT$ decorated with a Bernoulli percolation model with parameter $p$ conditionally on the ``White-Black-White" boundary condition of Figure \ref{fig:InitialColouringIIC}. In a map with such a boundary condition, we let $\Hc$, $\Hl$ and $\Hr$ be the hulls of the clusters of the origin and its left and right neighbours on the boundary. We let $\Treeb(\Hc)$, $\Treeb(\Hl)$ and $\Treeb(\Hr)$ be their respective tree of components, and denote by 
\[\left\lbrace M_v : v\in \Treeb(\Hc)_\bullet \right\rbrace, \quad \left\lbrace M_v^l : v\in \Treeb(\Hl)_\bullet \right\rbrace \quad \text{and} \quad \left\lbrace M_v^r : v\in \Treeb(\Hr)_\bullet \right\rbrace\] their irreducible components (i.e.\ the second components of $\Phi(\Hc)$, $\Phi(\Hl)$ and $\Phi(\Hr)$). Again, the boundary conditions of these components are determined by the hulls.

The \textit{height} $h(\C)$ of the open percolation cluster of the origin $\C$ will be defined in Section \ref{sec:ExplorationProcessIIC}. It corresponds to the maximal length of the open segment revealed when exploring the percolation interface between the origin and its left neighbour on the boundary.

\begin{figure}[!ht]
\centering
\includegraphics[scale=1.6]{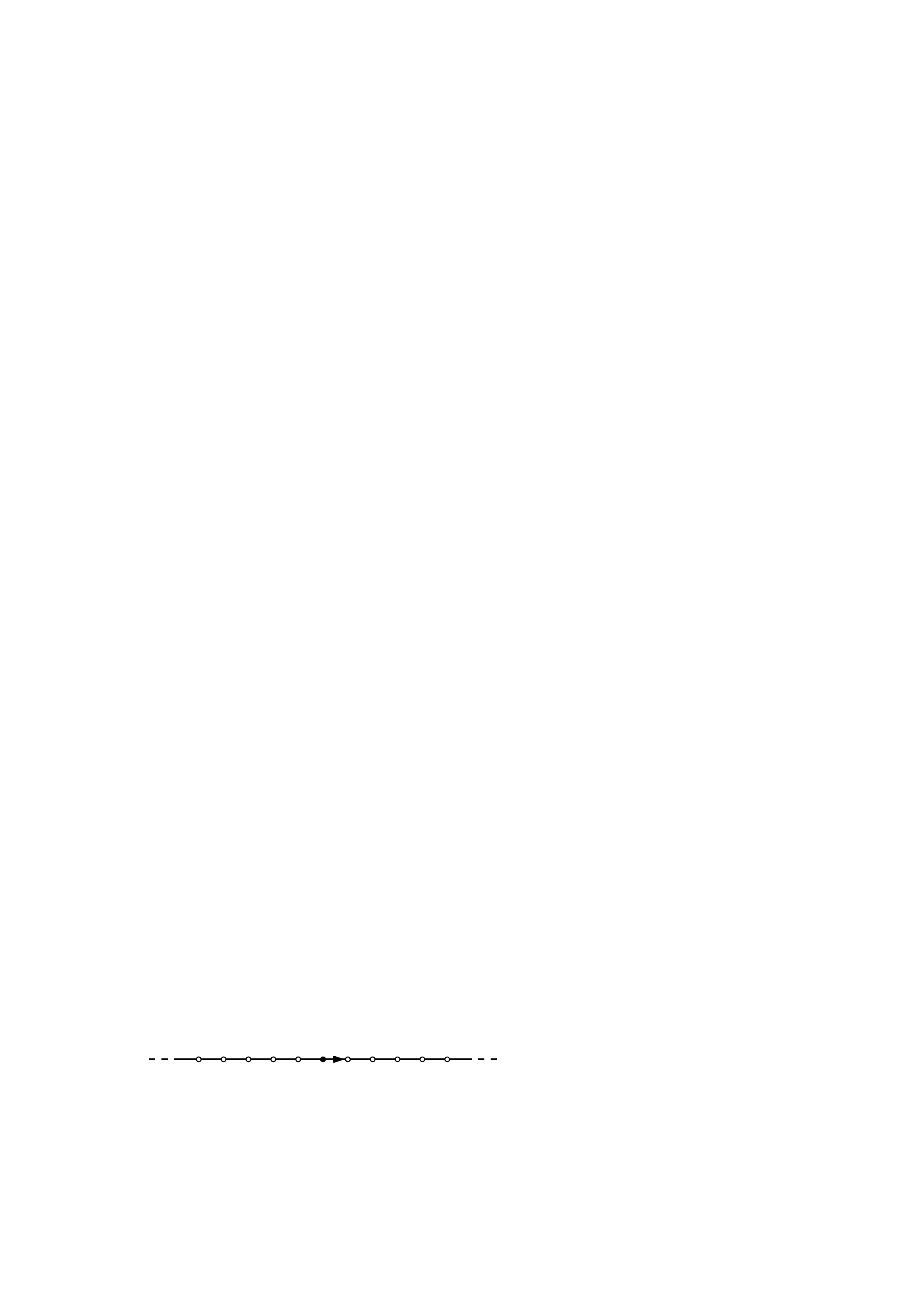}
\caption{The ``White-Black-White" boundary condition.}
\label{fig:InitialColouringIIC}
\end{figure}

\begin{Th}\label{thm:IICTheorem} Let $\Pb_p$ be the law of the $\UIHPT$ with ``White-Black-White" boundary condition equipped with a Bernoulli percolation model with parameter $p$. Then, in the sense of weak convergence for the local topology
\[\Pb_p(\cdot \mid \vert \C \vert =\infty) \underset{p \downarrow p_c}{\Longrightarrow}  \Piic\quad \text{and} \quad \Pb_{p_c}(\cdot \mid h(\C) \geq n) \underset{n \rightarrow \infty}{\Longrightarrow}  \Piic.\] The probability measure $\Piic$ is called (the law of) the Incipient Infinite Cluster of the $\UIHPT$ or $\IIC$.
The $\IIC$ is a.s. a percolated triangulation of the half-plane with ``White-Black-White" boundary condition. Moreover, in the $\IIC$:

\begin{itemize}
	\item The trees of components $\Treeb(\Hc)$, $\Treeb(\Hl)$ and $\Treeb(\Hr)$ are independent with respective distribution $\mathsf{GW}^{(\infty)}_{\muw,\mub}$, $\mathsf{GW}^{(\infty,l)}_{\muw,\mub}$ and $\mathsf{GW}^{(\infty,r)}_{\muw,\mub}$.
	
	\item Conditionally on $\Treeb(\Hc)$, $\Treeb(\Hl)$ and $\Treeb(\Hr)$, the irreducible components $\{M_v : v \in \Treeb(\Hc)_\bullet\}$, $\{M_v^l : v \in \Treeb(\Hl)_\bullet\}$ and $\{M^r_v : v \in \Treeb(\Hr)_\bullet\}$ are independent critically percolated Boltzmann triangulations with a simple boundary and respective distribution $\mathbf{W}_{\deg(v)}$.

\end{itemize} 

Finally, the $\IIC$ is recovered as the gluing $\Psi_{(\mathbf{N}_l,\mathbf{N}_r)}(\Hl,\Hc,\Hr)$ of $\Hl$, $\Hc$ and $\Hr$ along an pair of independent uniform infinite necklaces $(\mathbf{N}_l,\mathbf{N}_r)$ with distribution $\mathsf{UN}(\infty,\infty)$, also independent of $(\Hl,\Hc,\Hr)$. (The root edge of the $\IIC$ connects the origin of $\Hc$ to that of $\Hr$.)\end{Th}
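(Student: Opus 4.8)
The plan is to set up a peeling exploration that reveals the three hulls in turn, to track a real-valued ``peeling walk'' driving the exploration of $\Hc$, and to identify the law of this walk under each of the two conditionings.

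First I would explore the hull $\Hc$ of the origin's cluster $\C$. Starting from the root edge, at each step one peels an edge lying on one of the two percolation interfaces issued from the origin: the interface on the left of the origin (separating $\C$ from the cluster of the left neighbour) and the one on its right. By the re-rooting invariance of the $\UIHPT$ the edge to peel may be chosen as any measurable function of the already revealed map, and by the spatial Markov property the revealed triangle has the law prescribed by $q_{-1}=2/3$ and $q_k=Z_{k+1}9^{-k}$, its apex being open or closed according to an independent coin (fair when $p=p_c$, biased towards open when $p>p_c$). Each peeling step that swallows $k\ge 1$ edges on the $\C$-side of the frontier encloses a finite region which, by the spatial Markov property, is an independent critically percolated Boltzmann triangulation of perimeter $k+1$; these are the irreducible components $M_v$, so the boundary of $\Hc$ is an infinite looptree. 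I would record the \emph{peeling walk} $(\mathcal{X}_t)$, the length of the open segment of the current frontier, which is absorbed at $0$ exactly when $\C$ is entirely revealed. The key computation is that, under $\Pb_{p_c}$ and before absorption, $(\mathcal{X}_t)$ evolves as a centred random walk with an explicit step law obtained from the $q_k$'s, and that $h(\C)$ is the running maximum of its loop-erasure; hence $\{|\C|=\infty\}$ is the event that $(\mathcal{X}_t)$ is never absorbed, while $\{h(\C)\ge n\}$ is, up to the loop-erasure bookkeeping, the event that $(\mathcal{X}_t)$ reaches level $n$ before $0$.

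Next I would identify the two conditioned limits. Conditioning $\Pb_{p_c}$ on $\{h(\C)\ge n\}$ amounts to conditioning the centred walk $(\mathcal{X}_t)$ to reach height $n$ before hitting $0$, and as $n\to\infty$ this converges, by the classical theory of random walks conditioned to stay positive (the Doob transform by the renewal function of the walk), to the walk conditioned to stay positive forever. Conditioning $\Pb_p$ on $\{|\C|=\infty\}$ for $p>p_c$ amounts to conditioning the corresponding $p$-peeling walk, which now has positive drift, not to be absorbed; using the explicit dependence of the step law on $p$ together with $\Theta(p)\to 0$, an absolute-continuity argument shows this conditioned law converges, as $p\downarrow p_c$, to the same conditioned-to-stay-positive walk. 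Transferring these convergences from the peeling walk to the decorated map for the local topology — which uses that a bounded number of peeling steps eventually reveals a ball of arbitrarily large radius around the root, a consequence of the transience of the limiting walk — yields the two weak limits and the existence of $\Piic$.

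Finally I would read off the structure of $\Piic$. Under the conditioned exploration, one translates the peeling history of $\Hc$ into its tree of components: a step swallowing $k\ge 1$ edges on the $\C$-side becomes a black vertex with $k$ children, the size-biasing of the weights $q_k$ along the spine produced by the conditioning yielding exactly the probabilities $\mub(k)=6q_k$; and the number of ``inward'' steps (apex new and open, probability $q_{-1}=2/3$) performed between two consecutive swallowing events becomes a white vertex whose number of children is geometric, a direct count giving $\muw(k)=\tfrac{2}{3}\bigl(\tfrac{1}{3}\bigr)^{k}$. Since $m_\circ m_\bullet=\tfrac{1}{2}\cdot 2=1$ this is the critical pair, the spine of loops of $\Loop(\Treeb(\Hc))$ separating the origin from infinity matches the spine of Kesten's tree, and one concludes $\Treeb(\Hc)\sim\mathsf{GW}^{(\infty)}_{\muw,\mub}$, the components being independent critically percolated Boltzmann triangulations $\mathbf{W}_{\deg(v)}$ by the spatial Markov property. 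Conditionally on $\Hc$ (now infinite), the spatial Markov property identifies the region cut off on the left (resp.\ right) of $\C$ with a half-plane triangulation carrying a ``Black--White'' boundary condition, so Theorem~\ref{thm:DecompositionTheorem} applies and gives $\Treeb(\Hl)\sim\mathsf{GW}^{(\infty,l)}_{\muw,\mub}$, $\Treeb(\Hr)\sim\mathsf{GW}^{(\infty,r)}_{\muw,\mub}$ with the stated Boltzmann components; the successive applications of the spatial Markov property also give the independence of the three families. The frontier triangles along each interface — each glued to $\Hc$ along one side and to $\Hl$ (resp.\ $\Hr$) along another, with apex coloured by an independent fair coin, hence with its tip pointing left or right with probability $\tfrac{1}{2}$ — assemble into two independent uniform infinite necklaces $\mathbf{N}_l,\mathbf{N}_r$ of law $\mathsf{UN}(\infty,\infty)$, and reversing the exploration reconstructs the $\IIC$ as $\Psi_{(\mathbf{N}_l,\mathbf{N}_r)}(\Hl,\Hc,\Hr)$ with the stated rooting.

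\textbf{Main obstacle.} I expect the delicate point to be the second limit identification: showing that conditioning $\Pb_p$ on the infinite cluster converges, as $p\downarrow p_c$, to the same object as the critical height conditioning, rather than to something that remembers the rate of approach. This requires uniform control of the $p$-peeling step distribution near $p_c$, a tilting/absolute-continuity estimate comparing the $p$-walk conditioned on survival with the critical walk conditioned to stay positive, and a careful passage from convergence of the exploration to convergence of the full percolated map for the local topology.
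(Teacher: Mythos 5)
Your overall route is the same as the paper's: set up a peeling exploration tracking the open-segment length, recognise the two conditionings as conditioning the peeling walk to stay positive (via a Doob $h$-transform / Bertoin--Doney limit) and as conditioning the drifted walk on survival, show these produce the same limit as $p\downarrow p_c$, transfer to the local topology, and read off the looptree/necklace structure. But two of the steps you treat as routine would not go through as stated.

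First, the claim that ``conditionally on $\Hc$\ldots the region cut off on the left of $\C$ [is] a half-plane triangulation carrying a `Black--White' boundary condition, so Theorem \ref{thm:DecompositionTheorem} applies'' is not correct. Theorem \ref{thm:DecompositionTheorem} says that a Black--White $\UIHPT$ decomposes into \emph{two} infinite hulls (one open, one closed) glued along a necklace, and in particular its open hull is a.s.\ infinite. In the $\IIC$, after you condition on the infinite hull $\Hc$ and remove its interior, the boundary vertices of the left piece that came from $\partial\Hc$ are open but, precisely because $\Hc$ is a hull (it contains all of $\C$ and the finite holes), those vertices have no open neighbours left in the piece; the left piece contains a single infinite hull $\Hl$, not two. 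So the left piece is \emph{not} distributed as a Black--White $\UIHPT$, and Theorem \ref{thm:DecompositionTheorem} cannot be applied to it. The paper instead runs a coordinated left-then-right peeling (Algorithm \ref{alg:AlgorithmIIC}), identifies the laws of the closed-side encoding walks $W^{(l)}, W^{(r)}$ directly from the spatial Markov property (Lemmas \ref{lem:LawBWUnderP}--\ref{lem:LawBWUnderPp}), and deduces the $\GWmm^{(\infty,l)}$ / $\GWmm^{(\infty,r)}$ descriptions from the coding results (Propositions \ref{prp:ComponentTree} and \ref{prp:ComponentTreeBiInfinite}) rather than re-using Theorem \ref{thm:DecompositionTheorem}.

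Second, ``a bounded number of peeling steps eventually reveals a ball of arbitrarily large radius around the root, a consequence of the transience of the limiting walk'' is not an adequate justification for the passage to the local topology, and the transience of the walk alone does not give it. The boundary of the revealed map follows a ragged looptree shape and there is no a priori relation between the walk value and the graph distance from the origin to this boundary. The paper handles this with a genuinely non-trivial argument: it identifies cut-points along the spines of the looptrees, passes to a quotient map in which consecutive pieces between cut-points are collapsed, and uses that the number of cut-points discovered tends to infinity to conclude that $d(\rho,\partial M_{n,m})\to\infty$ (see the proofs of Theorems \ref{thm:DecompositionTheorem} and \ref{thm:IICTheorem}, building on Propositions \ref{prp:Submap} and \ref{prp:Coupling}, which couple the finite revealed map under the three measures in total variation and show it is a sub-map of the infinite limit). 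Your plan also needs the paper's construction of $\Piic$ as an explicit glued object from independent conditioned walks (Section \ref{sec:IICMeasure}); without it there is no candidate limit to which the finite explorations can be coupled.

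A smaller point: $\mub(k)=6q_k$ is the \emph{normalisation} of the weights $q_k$ (since $\sum_k q_k = 1/6$), not their size-biasing; the size-biased law $\bar{\mub}(k)=k\mub(k)/m_\bullet$ is what appears on the spine, and the identification $m_\circ m_\bullet = 1$ is what makes the pair critical. The conclusion you draw is right, but the phrasing conflates the on-spine and off-spine offspring laws.
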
 

\begin{Rk}In the work of Kesten~\cite{kesten_incipient_1986}, the $\IIC$ is defined for bond percolation on $\Z^2$ by considering a supercritical open percolation cluster, and letting $p$ decrease towards the critical point $p_c$. Equivalently, the $\IIC$ arises directly in the critical model when conditioning the open cluster of the origin to reach the boundary of $[-n,n]^2$, and letting $n$ go to infinity. Theorem \ref{thm:IICTheorem} is the analogous result for site percolation on the $\UIHPT$; however, we use a slightly different conditioning in the critical setting, which is more adapted to the use of the peeling techniques.
\end{Rk}

\begin{Rk}Theorem \ref{thm:IICTheorem} should be seen as a counterpart to Theorem \ref{thm:DecompositionTheorem}. Indeed, the decomposition of the $\IIC$ shows that when conditioning the open cluster of the origin to be infinite, one adds ex-nihilo an infinite looptree in the $\UIHPT$, as shown in Figure \ref{fig:DecompositionIIC}. This describes how the zero measure event we condition on twists the geometry of the initial random half-planar triangulation.\end{Rk}

\begin{figure}[!ht]
\centering
\includegraphics[scale=1.2]{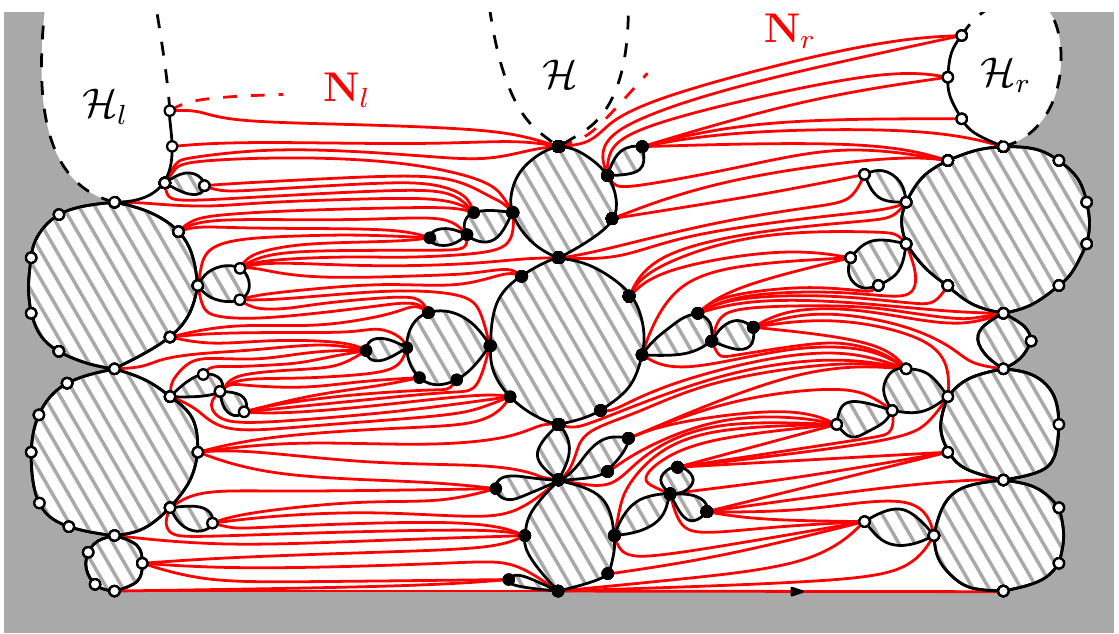}
\caption{The decomposition of the $\IIC$ into its percolation hulls and the uniform necklaces. The hatched areas are filled-in with independent critically percolated Boltzmann triangulations of the given boundary length.}
\label{fig:DecompositionIIC}
\end{figure}

\section{Coding of looptrees}\label{sec:CodingLooptrees}

\subsection{The contour function}\label{sec:ContourFunction}

We first describe the encoding of looptrees via an analogue of the contour function for trees~\cite{aldous_continuum_1993,le_gall_random_2005}. This bears similarities with the coding of continuum random looptrees of~\cite{curien_percolation_2014}.

\bigskip

\noindent\textbf{Finite looptrees.} Let $n\in\Z_+$ and $C=\{C_k : 0\leq k \leq n\}$ a discrete \textit{excursion} with no positive jumps and no constant step, that is $C_0=C_n=0$, and for every $0\leq k < n$, $C_k\in\Z_+$, $C_{k+1}-C_k\leq 1$ and $C_k\neq C_{k+1}$. 
The equivalence relation $\sim$ on $\{0,\ldots,n\}$ is defined by
\begin{equation}\label{eqn:EquivC}
	i \sim j \quad \text{iff} \quad C_i=C_j=\inf_{i \wedge j \leq k \leq i \vee j}{C_k}.
\end{equation} The quotient space $\{0,\ldots,n\}/\sim$ inherits the graph structure of the chain $\{0,\ldots,n\}$, and can be embedded in the plane as follows. Consider the graph of $C$ (with linear interpolation), together with the set of edges $E$ containing all the pairs $\{(i,C_i),(j,C_j)\}$ such that $ i \sim j$. This defines a planar map $\m_C$, whose vertices are identified to $\{0,\ldots,n\}$. The root edge of $\m_C$ connects the vertex $n$ to $n-1$. The embedding of $\{0,\ldots,n\}/\sim$ is obtained by contracting the edges of $E$ as in Figure \ref{fig:FiniteLooptree}. We obtain a looptree denoted by $\Lt_C$. Let us describe the tree of components $\Tr_C:=\Tree(\Lt_C)$. The black vertices of $\Tr_C$ are the internal faces of $\m_C$, and the white vertices of $\Tr_C$ are the equivalence classes of $\sim$. For every black vertex $f$, the white vertices incident to $f$ are the classes that have a representative $j$ incident to the face $f$ in $\m_C$, with the natural cyclic order. The root edge of $\Tr_C$ connects the class of $0$ to the the face on the left of the root of $\m_C$. See Figure \ref{fig:FiniteLooptree} for an example. This construction extends to the case where $C_n>0$, but the resulting map in not a looptree in general.

\begin{figure}[!ht]
\centering
\includegraphics[scale=1.5]{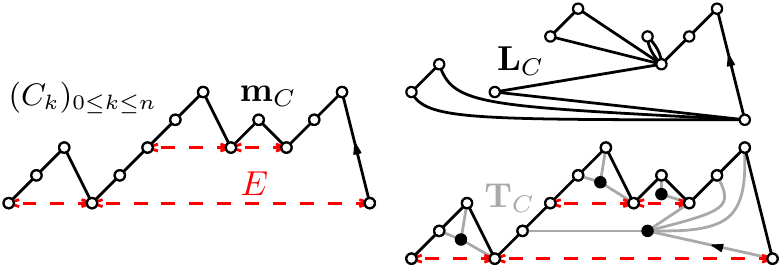}
\caption{The construction of $\Lt_C$ and $\Tr_C$ from the excursion $C$.}
\label{fig:FiniteLooptree}
\end{figure}

\bigskip

\noindent\textbf{Infinite looptrees.} Let us extend the construction to infinite looptrees. Let $C:\Z_+ \rightarrow \Z$ be a function such that $C_0=0$, with no positive jump, no constant step and such that
\begin{equation}\label{eqn:LimInf}
	\liminf_{k \rightarrow \infty}C_k = -\infty.
\end{equation} The function $C$ is extended to $\Z$ by setting $C_k=k$ for $k\in\Z_-$. We define an equivalence relation $\sim$ on $\Z$ by applying (\ref{eqn:EquivC}) with the function $C$. The graph of $-C$ and the set of edges $E$ containing all the pairs $\{(i,-C_i),(j,-C_j)\}$ such that $ i \sim j$ define an infinite map $\m_C$, whose vertices are identified to $\Z$ (the root connects $0$ to $1$). By contracting the edges of $E$, we obtain the infinite looptree $\Lt_C$ (which is an embedding of $\Z/\sim$). The tree $\Tr_C:=\Tree(\Lt_C)$ is defined as in the finite setting. By the assumption (\ref{eqn:LimInf}), internal faces of $\m_C$ (the black vertices) and equivalence classes of $\Z/\sim$ (the white vertices) are finite. Thus, $\Tr_C$ is locally finite. We let
\begin{equation}\label{eqn:ExcursionIntervals}
	\tau_0=0 \quad \text{and} \quad \tau_{k+1}:=\inf \left\lbrace i\geq \tau_k : C_i<C_{\tau_k} \right\rbrace, \quad k\in\Z_+.
\end{equation} For every $k\in\N$, the white vertex of $\Tr_C$ associated to $\tau_k$ disconnects the root from infinity, as well as its (black) parent in $\Tr_C$. This exhibits the unique spine of $\Tr_C$ (and a spine of faces in $\Lt_C$). Since $C_k=k$ for negative $k$, there is no vertex on the left of the spine of $\Lt_C$.

\begin{figure}[ht]
\centering
\includegraphics[scale=1.5]{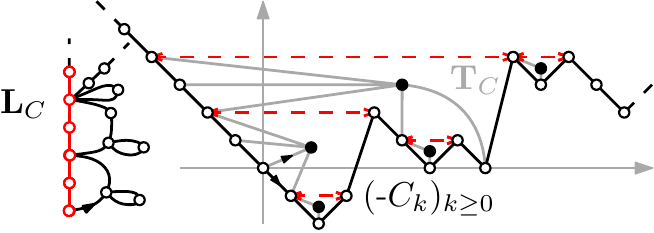}
\caption{The construction of $\Lt_C$ and $\Tr_C$ from $C$.}
\label{fig:HalfInfiniteLooptree}
\end{figure}

\medskip

We finally define a looptree out of a pair of functions $C,C':\Z_+ \rightarrow \Z$, with $C_0=C'_0=0$, no positive jumps, no constant steps and so that $C$ satisfies (\ref{eqn:LimInf}) and $C'$ is nonnegative. First define a looptree $\Lt_C$ as above, and let $\{e_i : i\in \Z_+\}$ be the half-edges of the left boundary of $\Lt_C$ in contour order. Then, we define an equivalence relation $\sim$ on $\Z_+$ by applying (\ref{eqn:EquivC}) with the function $C'$. Let $R_0=-1$ and for every $k\in\N$, $ R_k:=\sup\{i\in\Z_+ : C'_i = k-1\}$. For every $k\in\Z_+$, the excursion $\{C'_{R_{k}+i+1}-k : 0\leq i \leq R_{k+1}-R_k-1\}$ of $C'$ above its future infimum defines a looptree $\Lt_k$. We now consider the graph of $\Z_+$ embedded in the plane and for every $k\in\Z_+$, attach the looptree $\Lt_k$ on the left of the vertex $k\in\Z_+$ (so that the origin of $\Lt_k$ matches the vertex $k$ and its root edge follows $(k,k+1)$ in counterclockwise order). We obtain a \textit{forest} of looptrees $\mathbf{F}_{C'}$, isomorphic to $\Z_+/\sim$. The infinite looptree $\Lt_{C,C'}$ is obtained by gluing the left boundary of $\Lt_C$ to the right boundary of $\mathbf{F}_{C'}$ (i.e., by identifying the half-edge $e_i$ with the half-edge $(i+1,i)$ of $\N$ for every $i\in\N$). The root edge of $\Lt_{C,C'}$ is the root edge of $\Lt_{C}$. The tree of components $\Tr_{C,C'}:=\Tree(\Lt_{C,C'})$ has a unique spine inherited from $\Tr_C$. We can also define a function $C^*:\Z\rightarrow\Z$ by
\begin{equation}\label{eqn:ProcessCStar}
C^*_k=\left\lbrace
\begin{array}{ccc}
-C_{k} & \mbox{if} & k\in \Z_+\\
C'_{-k} & \mbox{if} & k\in \Z_-\\
\end{array}\right..
\end{equation} and an equivalence relation $\sim$ on $\Z$ by
\begin{equation}\label{eqn:EquivCStar}
  i \sim j \quad \text{iff} \quad
\left\lbrace\begin{array}{ccc}
i \vee j > 0 & \mbox{and} & \inf_{k \leq i \wedge j }{C^*_k} \geq C^*_i=C^*_j \geq \sup_{(i \wedge j)\vee 0 \leq k \leq i \vee j}{C^*_k}\\
\mbox{or} & & \\
i \vee j \leq 0 & \mbox{and} & \inf_{i \wedge j \leq k \leq i \vee j}{C^*_k} \geq C^*_i=C^*_j \\
\end{array}\right.
\end{equation} (with $\inf {\emptyset} = +\infty $ and $\sup {\emptyset} = -\infty $). Then, $\Lt_{C,C'}$ is isomorphic to $\Z/\sim$ (see Figure \ref{fig:InfiniteLooptree}). In the next part, we let $p_C$ and $p_{C,C'}$ denote the canonical projection on $\Lt_{C}$ and $\Lt_{C,C'}$.

\begin{figure}[!ht]
\centering
\includegraphics[scale=1.5]{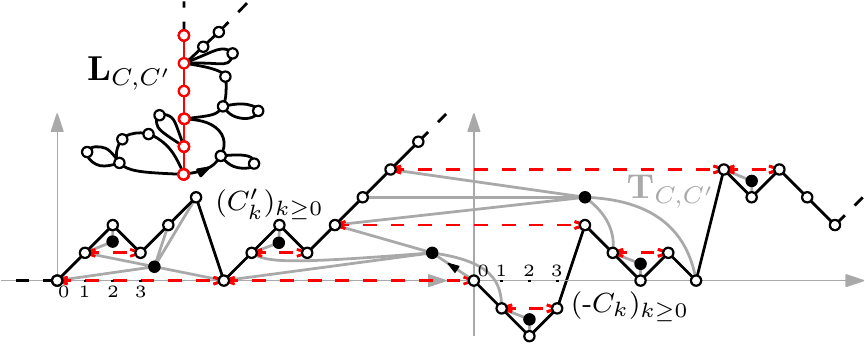}
\caption{The construction of $\Lt_{C,C'}$ and $\Tr_{C,C'}$ from $C$ and $C'$.}
\label{fig:InfiniteLooptree}
\end{figure}

\subsection{Random walks}\label{sec:RandomWalk}

We now gather results on random walks. For every probability measure $\nu$ on $\Z$ and every $x\in \Z$, let $P^{\nu}_x$ be the law of the simple random walk started at $x$ with step distribution $\nu$ (we may omit the exponent $\nu$). Let $Z=\{Z_k : k\in\Z_+\}$ be the canonical process and $T:=\inf \left\lbrace k\in\Z_+ : Z_k<0\right\rbrace$. We assume that $\nu$ is centered and $\nu((1,+\infty))=0$ (the random walk is called upwards-skip-free or with no positive jumps). For every $k\in \Z$, we let $\widehat{\nu}(k)=\nu(-k)$. 

\bigskip

\noindent\textbf{Overshoot.} We start with a result on the \textit{overshoot} at the first entrance in $(-\infty,0)$.

\begin{Lem}\label{lem:RandomWalkLastJumpTimeReverse} We have
	\[P^{\nu}_0\left( Z_{T-1}-Z_T=k \right)=\frac{k\nu(-k)}{\nu(1)}, \quad k\in\Z_+.\] Conditionally on $Z_{T-1}-Z_T$, $-Z_{T}$ is uniform on $\left\lbrace 1, \ldots, Z_{T-1}-Z_T \right\rbrace$. Moreover, under $P^{\nu}_0$ and conditionally on $Z_{T-1}$, the reversed process $\{\widehat{Z}_k :0\leq k < T\}:=\{Z_{T-1-k} :0\leq k < T\}$ has the same law as $\{Z_{k} : 0\leq k <T\}$ under $P^{\widehat{\nu}}_{Z_{T-1}}$. 
\end{Lem}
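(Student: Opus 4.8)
The plan is to exploit the no-positive-jumps hypothesis: because $Z$ can only decrease by jumps of size $\geq 1$ and increase by steps of size exactly $1$, the walk reaches every nonnegative level before entering $(-\infty,0)$, and the entrance into $(-\infty,0)$ happens by a single downward jump from some level $Z_{T-1}\in\Z_+$ to $Z_T<0$. First I would set up the duality/time-reversal. Fix the trajectory length $n$ and condition on $\{T=n\}$; a path $(z_0=0,z_1,\dots,z_n)$ contributing to this event is one that stays in $\Z_+$ for indices $0,\dots,n-1$ and has $z_n<0$. Reversing time, consider $\widehat z_k:=z_{n-1-k}$ for $0\le k\le n-1$, which starts at $\widehat z_0=z_{n-1}=:h$ and whose increments are $\widehat z_{k+1}-\widehat z_k=-(z_{n-k}-z_{n-1-k})$, i.e.\ the negatives of the reversed increments. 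Under $P_0^\nu$ the increments are i.i.d.\ with law $\nu$, so the reversed-and-negated increments are i.i.d.\ with law $\widehat\nu$. The key combinatorial point is the \emph{cycle lemma}/Otter--Dwass type observation: a path from $0$ of length $n$ that stays $\ge 0$ on $[0,n-1]$ and is $<0$ at time $n$ corresponds bijectively, via reversal, to a path from $h$ that stays $\ge 0$ on $[0,n-1]$ — here one uses that upward steps are unit, so ``first passage below $0$'' for the forward walk translates exactly into ``staying nonnegative'' for the reversed walk started at $h$, the level $z_{n-1}$. This gives the last displayed assertion: conditionally on $Z_{T-1}=h$, the law of $\{Z_{T-1-k}:0\le k<T\}$ is that of $\{Z_k:0\le k<T\}$ under $P^{\widehat\nu}_h$.

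Next I would read off the overshoot law. Decompose on the value $h=Z_{T-1}$ and on the size $k=Z_{T-1}-Z_T$ of the fatal jump. By the strong Markov property at time $T-1$,
\[
P_0^\nu\big(Z_{T-1}=h,\ Z_{T-1}-Z_T=k\big)=P_0^\nu\big(Z_{T-1}=h,\ \text{walk stays }\ge0\text{ up to }T-1\big)\cdot\nu(-k)\,\mathbf 1_{\{k\ge h+1\}},
\]
since from level $h$ a jump of $-k$ lands below $0$ iff $k>h$, i.e.\ $h\in\{0,\dots,k-1\}$. Summing over $h\in\{0,\dots,k-1\}$,
\[
P_0^\nu\big(Z_{T-1}-Z_T=k\big)=\nu(-k)\sum_{h=0}^{k-1}P_0^\nu\big(Z_{T-1}=h,\ \text{stays }\ge 0\text{ on }[0,T-1]\big).
\]
The inner sum is the probability that the walk is at one of the levels $0,\dots,k-1$ the step before first going negative; but by the no-positive-jumps property combined with the time-reversal above, $\sum_{h\ge 0}P_0^\nu(Z_{T-1}=h,\ \text{stays}\ge0)=1$ and, more precisely, $P_0^\nu(Z_{T-1}\in\{0,\dots,k-1\})$ can be evaluated. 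I would instead argue directly: by reversal, $P_0^\nu(Z_{T-1}=h)$ equals the $P_h^{\widehat\nu}$-probability of staying nonnegative forever-until-absorption in a way that makes $\sum_{h=0}^{k-1}P_0^\nu(Z_{T-1}=h)$ telescope to $k\,\nu(1)^{-1}$ times a constant; matching the required normalization $\sum_k P_0^\nu(Z_{T-1}-Z_T=k)=1$ forces the inner sum to equal $k/\nu(1)$, giving $P_0^\nu(Z_{T-1}-Z_T=k)=k\nu(-k)/\nu(1)$. Finally, the conditional uniformity of $-Z_T$ on $\{1,\dots,Z_{T-1}-Z_T\}$ is immediate from the display above: conditionally on $Z_{T-1}-Z_T=k$, the pre-jump level $Z_{T-1}=h$ is equally likely to be any of $0,1,\dots,k-1$ (each weighted by the same inner-sum term, which I will have shown is $h$-independent in the relevant sense), and then $-Z_T=k-h$ is uniform on $\{1,\dots,k\}$.

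The main obstacle I anticipate is making the time-reversal rigorous at the level of the \emph{stopped} walk — i.e.\ verifying that the reversal of the path killed at its first entrance into $(-\infty,0)$ is exactly the path killed at its first return to $(-\infty,0)$ for the reflected step law, including the bookkeeping of the endpoints $Z_{T-1}$ and $Z_T$. The cleanest route is to fix $n$, condition on $\{T=n,\ Z_{n-1}=h,\ Z_n=h-k\}$, write the probability of a specific trajectory as a product of $\nu$-weights, and check that the reversal map is a measure-preserving bijection onto trajectories counted by $P_h^{\widehat\nu}(Z_j\ge 0,\ 0\le j\le n-1;\ Z_n=h-k)$ — this is pure combinatorics on finite paths and uses the unit-upward-step property only to guarantee there is no ambiguity about which step is the fatal one. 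Once the identity in law of the reversed stopped path is established, everything else is elementary summation and the normalization check $\sum_{k\ge1}k\nu(-k)=\nu(1)$ coming from $\nu$ being centered with no positive jumps (so $\sum_{k\ge1}k\nu(-k)=\sum_{j\le-1}(-j)\nu(j)=\nu(1)$).
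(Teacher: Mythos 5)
Your overall strategy — a pathwise time reversal $\{Z_{T-1-k}\}$ comparing $P_0^\nu$ with $P_{Z_{T-1}}^{\widehat\nu}$, then reading off the overshoot and uniformity — is exactly the paper's. But the execution of the overshoot computation has a genuine error.

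The problematic step is your ``strong Markov at time $T-1$'' display. The time $T-1$ is not a stopping time, so that identity cannot be invoked directly, and more importantly the formula as written is false: the event $\{Z_{T-1}=h\}$ already forces the increment at time $T$ to be $\le -(h+1)$, so the conditional law of the step given $Z_{T-1}=h$ is $\nu(-k)/\nu((-\infty,-h))$, not $\nu(-k)$. The correct version of your decomposition replaces $P_0^\nu(Z_{T-1}=h)$ by the Green's function $G_h := \sum_{n\ge 0} P_0^\nu(T>n,\,Z_n=h)$, giving
\[
P_0^\nu\bigl(Z_{T-1}=h,\ Z_{T-1}-Z_T=k\bigr)=G_h\,\nu(-k)\,\mathbf{1}_{\{k>h\}},
\]
and what must be shown is that $G_h=1/\nu(1)$ for all $h\in\Z_+$ — an $h$-independence statement. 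You assert that ``$\sum_{h=0}^{k-1}P_0^\nu(Z_{T-1}=h)$ telescopes to $k/\nu(1)$'' and that normalization then fixes the constant, but both halves of this are off: the marginal is $P_0^\nu(Z_{T-1}=h)=\nu((-\infty,-h))/\nu(1)$ (so its partial sums are not $k/\nu(1)$), and normalization alone can never supply $h$-independence of $G_h$ — it only fixes a global constant after the $h$-dependence is already understood.

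The paper fills the gap by pushing the time-reversal one step further: it computes the probability of an entire finite path, observes that under $P_{x_0}^{\widehat\nu}$ the reversed walk satisfies $Z_{T-1}=0$ a.s.\ (because $\widehat\nu$ has no negative jumps other than $-1$) so that $P_{x_0}^{\widehat\nu}(Z_{T-1}=0)=1$, and deduces the clean joint law $P_0^\nu(Z_{T-1}=i,\,Z_T=j)=\nu(j-i)/\nu(1)$ for $i\in\Z_+$, $j<0$. Summing over $i$ gives the overshoot law and the conditional uniformity in one stroke, and summing over $j$ recovers $P_0^\nu(Z_{T-1}=i)=\nu((-\infty,-i))/\nu(1)$, equivalently $G_i=1/\nu(1)$. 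You should replace the ``telescoping plus normalization'' step by this computation of the joint law $P_0^\nu(Z_{T-1}=i,\,Z_T=j)$; the rest of your plan is fine.
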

 
\begin{proof} Let $n\in\N$ and $x_0,\ldots,x_{n-1} \geq 0$. On the one hand
\begin{align*}
	P^{\nu}_0(T=n,\widehat{Z}_0=x_0,\ldots,\widehat{Z}_{n-1}=x_{n-1})&=P^{\nu}_0\left(Z_0=x_{n-1},\ldots,Z_{n-1}=x_0,Z_n<0\right)\\
	&=\mathbf{1}_{\left\lbrace x_{n-1}=0 \right \rbrace}\nu(x_{n-2}-x_{n-1})\cdots \nu(x_0-x_1)\nu((-\infty,-x_0))
\end{align*} and on the other hand since $\nu((1,+\infty))=0$,
\begin{align*}
	P^{\widehat{\nu}}_{x_0}&\left(T=n, Z_0=x_0,\ldots,Z_{n-1}=x_{n-1}\right)=\mathbf{1}_{\left\lbrace x_{n-1}=0 \right \rbrace}\nu(-(x_1-x_0))\cdots \nu(-(x_{n-1}-x_{n-2}))\nu(1).
\end{align*}Now, while computing the probability $P^{\nu}_0\left(Z_{T-1}=x_{0} \right)$ one gets 
\begin{align*}P^{\nu}_0\left(Z_{T-1}=x_{0} \right)=\nu((-\infty,-x_0))\sum_{n\in\N}{\sum_{x_1,\ldots,x_{n-1}\geq 0}{\mathbf{1}_{\left\lbrace x_{n-1}=0 \right \rbrace}\nu(x_{n-2}-x_{n-1})\cdots \nu(x_0-x_1)}},\end{align*}and still using $\nu((1,+\infty))=0$, we have

\begin{align*}1=P^{\widehat{\nu}}_{x_0}\left(Z_{T-1}=0 \right)=\nu(1)\sum_{n\in\N}{\sum_{x_1,\ldots,x_{n-1}\geq 0}{\mathbf{1}_{\left\lbrace x_{n-1}=0 \right \rbrace}\nu(x_{n-2}-x_{n-1})\cdots \nu(x_0-x_1)}}.\end{align*} The last assertion follows, as well as $P^{\nu}_0\left(Z_{T-1}=i \right)=\nu((-\infty,-i))/\nu(1)$ for every $i\in\Z_+$. By a direct computation, for every $i\in\Z_+$ and $j\in \Z_-\backslash\{0\}$,
\[P^{\nu}_0\left(Z_{T}=j \mid Z_{T-1}=i \right)=\frac{\nu(j-i)}{\nu((-\infty,-i))} \quad \text{and} \quad  P^{\nu}_0\left(Z_{T-1}=i,Z_{T}=j\right)=\frac{\nu(j-i)}{\nu(1)}.\] Then, for every $k\in\Z_+$ and $l\in\{1,\ldots,k\}$,
\[P^{\nu}_0\left(Z_{T-1}-Z_{T}=k\right)=\frac{k\nu(-k)}{\nu(1)} \quad \text{and} \quad P^{\nu}_0\left(-Z_{T}=l\mid Z_{T-1}-Z_{T}=k\right)=\frac{1}{k},\] which ends the proof.\end{proof}

\begin{Rk}Since $\nu((1,+\infty))=0$, by putting $\widehat{Z}_T:=\widehat{Z}_{T-1}-1$ we have that under $P^{\nu}_0$ and conditionally on $Z_{T-1}$, $\{\widehat{Z}_k : 0\leq k \leq T\}$ is distributed as $\{Z_k : 0\leq k \leq T\}$ under $P^{\widehat{\nu}}_{Z_{T-1}}$.
\end{Rk}

\bigskip

\noindent\textbf{Random walk conditioned to stay nonnegative.} We now recall the construction of the so-called \textit{random walk conditioned to stay nonnegative} of \cite{bertoin_conditioning_1994} (see also~\cite{feller_introduction_1971,spitzer_principles_2001}). We let $T_n:=\inf \left\lbrace k\in\N : Z_k\geq n \right\rbrace$ for every $n\in\Z_+$. Let $H:=\{H_k : k\in\Z_+\}$ be the strict ascending ladder height process of $-Z$. Namely, let $L_0=0$ and 
\[H_k=-Z_{L_k}, \quad L_{k+1}=\inf\{j>L_k : -Z_j>H_k\}, \quad k\in\Z_+.\] Then, the \textit{renewal function} associated with $H$ is defined by 
\begin{align}\label{eqn:RenewalFunction}
	V(x):=\sum_{k=0}^\infty{P_0(H_k\leq x)}=E_0\left(\sum_{k=0}^{T_0-1}{\mathbf{1}_{\{Z_k\geq -x\}}}\right), \quad x\geq 0,
\end{align} where the equality follows from the duality lemma. For every $x\geq 0$, we denote by $P_x^{\uparrow}$ the (Doob) h-transform of $P_x$ by $V$. That is, for every $k\in\Z_+$ and every $F:\Z^{k+1}\rightarrow \R$, 
\begin{align}\label{eqn:HTransform}
	E_x^\uparrow(F(Z_0,\ldots,Z_k))=\frac{1}{V(x)}E_x\left(V(Z_k)F(Z_0,\ldots,Z_k)\mathbf{1}_{k<T}\right).
\end{align} 

\begin{Thbis}{\textup{\cite[Theorem 1]{bertoin_conditioning_1994}}} For every $x\geq 0$, in the sense of weak convergence of finite-dimensional distributions, \[P_x\left( \cdot \mid T_n<T \right) \underset{n \rightarrow +\infty}{\Longrightarrow} P_x^{\uparrow} \quad \text{ and } \quad P_x\left( \cdot \mid T\geq n \right) \underset{n \rightarrow +\infty}{\Longrightarrow} P_x^{\uparrow}.\] The probability measure $P_x^{\uparrow}$ is the law of the random walk conditioned to stay nonnegative.	
\end{Thbis}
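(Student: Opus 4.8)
This is the theorem of Bertoin and Doney, and the plan is to reduce both conditioned laws to the Doob transform $P_x^{\uparrow}$ by soft arguments, the only genuine input being the classical \emph{harmonicity of the renewal function $V$ of \textup{(\ref{eqn:RenewalFunction})} for the walk killed at $T$}: extending $V$ by $0$ to the negative integers, $E_x(V(Z_1)\mathbf{1}_{Z_1\ge 0})=V(x)$ for every $x\in\Z_+$. This is a standard consequence of the Wiener--Hopf factorisation and of duality for upwards-skip-free walks. Equivalently, $(V(Z_{k\wedge T}))_{k\ge 0}$ is a nonnegative $P_x$-martingale, so that $E_x(V(Z_k)\mathbf{1}_{k<T})=V(x)$ for all $k$, which is precisely what makes the right-hand side of \textup{(\ref{eqn:HTransform})} a consistent family and hence $P_x^{\uparrow}$ a bona fide probability measure.

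For the conditioning on $\{T_n<T\}$ I would argue that the convergence is in fact an equality for all large $n$. Since $\nu$ has no positive jump, $Z_{T_n}=n$; since $\nu$ is centred the walk is recurrent, so $T\wedge T_n<\infty$ $P_x$-a.s.; and the martingale $V(Z_{k\wedge T\wedge T_n})$ is bounded by $V(n)$ because $V$ is non-decreasing. Optional stopping then gives $V(y)=V(n)\,P_y(T_n<T)$, i.e.\ $P_y(T_n<T)=V(y)/V(n)$ for $0\le y\le n$. Now fix $k$ and a bounded $F\colon\Z^{k+1}\to\R$. For $n>x+k$ the walk started at $x$ cannot reach level $n$ within its first $k$ steps (again because $\nu((1,+\infty))=0$), so $\{T_n<T\}\subset\{k<T\}$; conditioning on $\mathcal{F}_k$, the Markov property together with the identity above yields
\[E_x(F(Z_0,\ldots,Z_k)\mathbf{1}_{T_n<T})=\frac{1}{V(n)}\,E_x(F(Z_0,\ldots,Z_k)\,V(Z_k)\,\mathbf{1}_{k<T}).\]
Dividing by $P_x(T_n<T)=V(x)/V(n)$ and comparing with \textup{(\ref{eqn:HTransform})} gives $E_x(F(Z_0,\ldots,Z_k)\mid T_n<T)=E_x^{\uparrow}(F(Z_0,\ldots,Z_k))$ for every $n>x+k$, which is the first claim.

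For the conditioning on $\{T\ge n\}$ the Markov property at time $k$ gives, for $n>k$ and with $\varphi_m(y):=P_y(T\ge m)$,
\[E_x(F(Z_0,\ldots,Z_k)\mid T\ge n)=\frac{E_x(F(Z_0,\ldots,Z_k)\,\mathbf{1}_{k<T}\,\varphi_{n-k}(Z_k))}{\varphi_n(x)}.\]
The crux is the ratio asymptotics $\varphi_m(y)/\varphi_m(0)\to V(y)/V(0)$ as $m\to\infty$. Granting this, $\varphi_{n-k}(Z_k)/\varphi_n(x)\to V(Z_k)/V(x)$ pointwise, the integrand is dominated uniformly in $n$ by a constant times $V(Z_k)$ (using the same asymptotics, together with the regular variation of the norming sequence to control $\varphi_{n-k}(0)/\varphi_n(0)$), and $E_x(V(Z_k)\mathbf{1}_{k<T})=V(x)<\infty$; dominated convergence then gives the limit $\frac{1}{V(x)}E_x(F(Z_0,\ldots,Z_k)V(Z_k)\mathbf{1}_{k<T})=E_x^{\uparrow}(F(Z_0,\ldots,Z_k))$. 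To obtain the ratio asymptotics I would use the identity $E_y(V(Z_m)\mathbf{1}_{m<T})=V(y)$ --- valid for \emph{all} $m$, unlike in the previous part --- together with a conditional limit theorem: under $P_y(\cdot\mid m<T)$ the rescaled position $Z_m/b_m$ converges in law to the marginal of the corresponding stable meander, with $b_m$ the norming sequence of $\nu$ (regularly varying of index $2/3$ in the setting of this paper, where $\nu$ lies in the domain of attraction of the spectrally negative $3/2$-stable law), the convergence holding with uniform integrability of $V(Z_m)/b_m$; feeding this into $E_y(V(Z_m)\mathbf{1}_{m<T})=V(y)$ yields $\varphi_m(y)\sim V(y)/(c\,b_m)$ for some $c>0$ independent of $y$, which gives both the ratio statement and the domination. (In full generality, with no domain-of-attraction assumption, the same ratio can instead be extracted from a renewal-type equation for $\varphi_m$ and the renewal theorem.)

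The main obstacle is exactly this last tail estimate $\varphi_m(y)\sim V(y)\,g(m)$ with $g$ independent of $y$, equivalently the meander-type conditional limit theorem for the walk killed at $T$; this is where the fine structure of $\nu$ is used, whereas everything else --- the harmonicity of $V$ apart --- is optional stopping, the Markov property and dominated convergence. It is worth noting that the conditioning on $\{T_n<T\}$ bypasses this obstacle entirely, being an equality for all $n>x+k$.
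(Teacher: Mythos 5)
The paper does not prove this statement; it is quoted from Bertoin and Doney (\cite[Theorem 1]{bertoin_conditioning_1994}) and used as a black box (notably in the proof of Lemma~\ref{lem:RWCSPkilled}), so there is no in-paper proof to compare against. Your treatment of the conditioning on $\{T_n<T\}$ is complete and correct once harmonicity of $V$ for the walk killed at $T$ is granted, and you rightly identify that harmonicity as the only genuine input: $V(Z_{k\wedge T\wedge T_n})$ is a bounded nonnegative martingale, $T\wedge T_n<\infty$ a.s.\ by recurrence, and skip-freeness forces $Z_{T_n}=n$ exactly, so optional stopping yields $P_y(T_n<T)=V(y)/V(n)$ for $0\le y\le n$; combined with the inclusion $\{T_n<T\}\subset\{k<T\}$ for $n>x+k$ (again skip-freeness) and the Markov property at time $k$, this reproduces the right-hand side of \eqref{eqn:HTransform} on the nose, so the conditional law coincides with $P_x^{\uparrow}$ on the first $k$ steps for \emph{every} $n>x+k$. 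That is strictly stronger than weak convergence and is special to the skip-free setting; it is also, up to a change of starting point, the same mechanism as the paper's Lemma~\ref{lem:RWCSPkilled}, except that you derive the hitting probability from $V$ directly rather than invoking the cited theorem to identify the limit.

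For the conditioning on $\{T\ge n\}$, you correctly isolate the crux --- the ratio asymptotic $P_y(T\ge m)\sim V(y)\,g(m)$ with $g$ independent of $y$, together with enough uniformity to dominate $\varphi_{n-k}(Z_k)/\varphi_n(x)$ by a constant multiple of $V(Z_k)/V(x)$ --- but that part remains a sketch. The uniform integrability of $V(Z_m)/b_m$ under $P_y(\cdot\mid m<T)$, the regular-variation control of $\varphi_{n-k}(0)/\varphi_n(0)$, and the two-sided estimates required to make the domination uniform in $n$ are asserted rather than proved, and they carry essentially all of the analytic work in Bertoin and Doney's proof of this half of the theorem (and of the meander-type conditional limit theorems you invoke as an alternative). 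You flag the gap yourself, so this is incompleteness rather than error; a full proof would have to carry out the renewal-theoretic argument, or the meander limit theorem together with a genuine uniform-integrability estimate, in detail.
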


We now recall Tanaka's pathwise construction, and let $T_+:=\inf \left \lbrace k\in\Z_+ : Z_k>0 \right \rbrace$.

\begin{Thbis}{\textup{\cite[Theorem 1]{tanaka_time_1989}}} Let $\{w_k : k\in\Z_+\}$ be independent copies of the reversed excursion
\[\left(0,Z_{T_+}-Z_{T_+-1},\ldots,Z_{T_+}-Z_{1},Z_{T_+}\right)\] under $P_0$, with $w_k=(w_k(0),\ldots,w_k(s_k))$. Let for every $k\in\Z_+$
\[Y_k':= \sum_{j=0}^{i-1}{w_j(s_j)}+w_i\left(k-\sum_{j=0}^{i-1}{w_j(s_j)}\right) \quad \text{for} \quad \sum_{j=0}^{i-1}{w_j(s_j)}<k\leq \sum_{j=0}^{i}{w_j(s_j)},\] and $Y_k:=Y'_{k+1}-1$. Then, the process $\{Y_k : k\in\Z_+\}$ has law $P_0^{\uparrow}$.
	
\end{Thbis}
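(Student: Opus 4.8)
This is Tanaka's classical pathwise construction, so I will only indicate the strategy. Since the law of a random path in $\Z^{\Z_+}$ is determined by its finite-dimensional marginals, the plan is to check that $\{Y_k : k\in\Z_+\}$ has the marginals prescribed by the $h$-transform \eqref{eqn:HTransform} defining $P^{\uparrow}_0$. Because $V(0)=\sum_{k\geq 0}P_0(H_k\leq 0)=1$ (only $H_0=0$ contributes), this amounts to proving that for every $n\in\Z_+$ and every path $0=y_0,y_1,\ldots,y_n$ with $y_k\in\Z_+$,
\[
  P\left(Y_0=y_0,\ldots,Y_n=y_n\right)\;=\;V(y_n)\prod_{k=1}^{n}\nu(y_k-y_{k-1}),
\]
where $P$ denotes the law under which the excursions $\{w_j\}$ are independent; indeed the right-hand side is $V(0)^{-1}E_0\big(V(Z_n)\mathbf{1}_{\{Z_k=y_k,\,k\leq n\}}\mathbf{1}_{\{n<T\}}\big)$, which is the value of $P^{\uparrow}_0$ on the cylinder $\{Z_k=y_k,\,k\leq n\}$ by \eqref{eqn:HTransform} (here $\{n<T\}\supseteq\{Z_k=y_k\leq 0\ \text{fails for some }k\}^c$ is automatic).

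First I would unfold the construction. Writing $Y_k=Y'_{k+1}-1$ and recalling that $Y'$ is, by the displayed formula, the concatenation of the i.i.d.\ reversed excursions $w_j$, the $i$-th one sitting at height $\sum_{j<i}w_j(s_j)$, the event $\{Y_k=y_k:0\leq k\leq n\}$ decomposes the corresponding path of $Y'$ into an initial run of \emph{complete} reversed excursions $w_0,\ldots,w_{m-1}$ followed by a (possibly empty) prefix of $w_m$. Now a complete reversed excursion $w_j=(0,Z_{T_+}-Z_{T_+-1},\ldots,Z_{T_+})$ is the time-reversal of the forward excursion $(Z_0,\ldots,Z_{T_+})$ under $P_0$, and reversing a lattice path merely reverses the sequence of its increments, so $P(w_j=\gamma)$ equals the product of the $\nu$-weights of the increments of $\gamma$. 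Hence the joint contribution of $w_0,\ldots,w_{m-1}$ is a product of $\nu$-factors which, after re-indexing along the $Y$-path, telescopes into $\prod_{k=1}^{n'}\nu(y_k-y_{k-1})$ for the appropriate initial segment $y_0,\ldots,y_{n'}$.

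The delicate point — and the main obstacle — is that the decomposition above is \emph{not} measurable with respect to the observed path: if $Y'$ returns below its terminal level after time $n+1$, then at time $n$ the current reversed excursion $w_m$ is still incomplete, so $P(Y_0=y_0,\ldots,Y_n=y_n)$ is really a sum, over all admissible ways of prolonging the incomplete final excursion, of the corresponding $\nu$-weights. The claim to establish is that this sum equals exactly $V(y_n)$. This is where the overshoot and time-reversal Lemma~\ref{lem:RandomWalkLastJumpTimeReverse}, together with the definition \eqref{eqn:RenewalFunction} of $V$ through the strict ascending ladder heights $\{H_k\}$ of $-Z$, enter: reversing the incomplete excursion turns ``ways of extending $w_m$ once it has overshot level $y_n$'' into ``ladder heights of $-Z$ that are $\leq y_n$'', and the total weight of these is $\sum_{k\geq 0}P_0(H_k\leq y_n)=V(y_n)$. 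Equivalently, one verifies that gluing the reversed excursions and then reversing the whole picture reconstructs the forward walk $Z$ cut along its descending ladder epochs, with $V$ appearing as the Green's function of the ladder-height renewal process. Assembling the three contributions and summing yields the displayed identity for all $n$ and all admissible paths, i.e.\ $\{Y_k\}\sim P^{\uparrow}_0$.

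An alternative, lighter route avoids the renewal bookkeeping by invoking the theorem of \cite{bertoin_conditioning_1994} recalled above. Reversing time at the first-passage time $T_n$ under $P_0(\cdot\mid T_n<T)$, and applying the strong Markov property at the first-passage time of level $n-j$ before letting $n\to\infty$, one sees that on any fixed finite window the reversed walk splits into i.i.d.\ reversed-excursion blocks, hence agrees in the limit with Tanaka's process $Y'$, while Bertoin's convergence identifies the law of the reversed walk with a spatial-and-temporal shift of $P^{\uparrow}_0$, so that $Y_k=Y'_{k+1}-1$ has law $P^{\uparrow}_0$. Here the work shifts to justifying the reversal at the random time $T_n$ and controlling the ``boundary'' excursion near level $n$, which can be handled by a finite-horizon duality computation before passing to the limit; in either approach the combinatorics of the last, incomplete reversed excursion is the crux.
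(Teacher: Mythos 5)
The paper does not prove this statement: it is Theorem~1 of Tanaka~\cite{tanaka_time_1989}, imported as a citation, and the only surrounding material is Remark~\ref{rk:RWCSN}, which records that Tanaka's construction produces a walk conditioned to stay \emph{positive} and that the shift $Y_k:=Y'_{k+1}-1$, together with the upwards-skip-free hypothesis, yields a walk conditioned to stay \emph{nonnegative} whose $h$-function is (up to a constant) the $V$ of~\eqref{eqn:RenewalFunction}. So there is no in-paper proof to compare against; what you have written is a reconstruction of Tanaka's original argument, not of anything in the paper.

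As a reconstruction, your first route has the right skeleton: fix a finite cylinder, use $V(0)=1$ (valid because $H$ is the strict ladder height process, so only $H_0=0$ contributes), read each complete reversed excursion as a $\nu$-weighted lattice path, and attribute the factor $V(y_n)$ to the incomplete tail. But the step you flag as ``the crux'' --- that the total $\nu$-weight of all admissible completions of the last excursion is exactly $V(y_n)$ --- is essentially the whole content of the theorem, and you leave it as an assertion with a pointer to Lemma~\ref{lem:RandomWalkLastJumpTimeReverse}. There is also a subtlety you underplay: a reversed excursion can revisit its terminal level several times, so the finite observation window does not determine how many of the $w_j$ have already completed; the ``initial run of complete excursions $w_0,\ldots,w_{m-1}$'' is itself not observable, and the sum you must carry out is over the excursion-count $m$ jointly with the possible tails of $w_m$, not merely over prolongations of a fixed $w_m$. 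The telescoping of $\nu$-factors therefore cannot be separated from the $V(y_n)$ identity; they must be established together. Finally, the ``lighter'' alternative via \cite[Theorem~1]{bertoin_conditioning_1994} is not obviously lighter: Bertoin's convergence is an input of comparable depth, and the proposed time-reversal at $T_n$ followed by $n\to\infty$ conceals exchange-of-limits and boundary-excursion estimates of the same order of difficulty as the renewal bookkeeping you were trying to avoid. In short, a reasonable outline of the cited theorem, but the central renewal computation and the measurability issue around the excursion decomposition remain to be done.
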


\begin{Rk}\label{rk:RWCSN}In \cite{tanaka_time_1989}, $\{Y_k' : k\in\Z_+\}$ is the h-transform of $P_0$ by a suitable renewal function $V'$. This function differs from the function $V$ of \eqref{eqn:RenewalFunction} and rather defines a random walk conditioned to stay positive. However, when the random walk is upwards-skip-free and we remove its first step (which gives $\{Y_k : k\in\Z_+\}$), the associated renewal function equals $V$ up to a multiplicative constant. This ensures that $\{Y_k : k\in\Z_+\}$ has law $P_0^{\uparrow}$.\end{Rk}

\noindent Let us rephrase this theorem. Let $R_0=-1$ and $R_k:=\sup \left \lbrace i\in\Z_+ : Z_i \leq k-1 \right \rbrace$ for $k\in\N$.

\begin{Cor}\label{cor:DecompositionRWCSFutureInfimum} Under $P^{\uparrow}_0$, the reversed excursions
\[\left\lbrace  Z^{(k)}_i, \ 0\leq i < R_{k+1}-R_k \right\rbrace:=\left\lbrace Z_{R_{k+1}-i}-k,  \ 0\leq i < R_{k+1}-R_k \right\rbrace, \quad k\in\Z_+\] are independent and distributed as $\{Z_{k} : 0\leq k < T\}$ under $P^{\widehat{\nu}}_{0}$.
	
\end{Cor}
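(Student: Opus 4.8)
The plan is to read off the statement from Tanaka's pathwise construction (the second quoted theorem) by carefully matching the two decompositions of the trajectory: the one coming from the i.i.d.\ reversed excursions $\{w_k\}$ used to build $Y'$, and the one coming from the future infimum process encoded by the times $R_k$. First I would work with the process $\{Y'_k : k \in \Z_+\}$ rather than $\{Y_k\}$, since the shift by one step only changes the initial condition and does not affect the excursion structure away from the origin; in fact, once I establish the result for $Y'$ under the ``conditioned to stay positive'' renewal function, Remark \ref{rk:RWCSN} guarantees that removing the first step yields exactly $P^\uparrow_0$, so the statement transfers verbatim. The key observation is that in Tanaka's construction the partial sums $\sigma_i := \sum_{j=0}^{i-1} w_j(s_j)$ are precisely the successive times at which $Y'$ reaches a new running \emph{minimum from the left is trivial}; more to the point, because each block $w_i$ is a reversed excursion of $-Z$ away from its infimum, on the block $(\sigma_{i-1}, \sigma_i]$ the process $Y'$ stays strictly above the level $\sigma_{i-1}$ attained at time $\sigma_{i-1}$ and returns to (one unit above) that level increment only at the end. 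Hence the future-infimum record times $R_k$ of $Y$ coincide with the block endpoints $\sigma_i$ of $Y'$ (up to the unit shift), and the $k$-th reversed excursion $Z^{(k)}$ in the statement is, up to translation by $k$, exactly the block $w_k$ read in the correct direction.

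The second step is to identify the law of a single block. By definition $w_k$ is a copy of the reversed vector $(0, Z_{T_+} - Z_{T_+-1}, \ldots, Z_{T_+} - Z_1, Z_{T_+})$ under $P_0$, where $T_+ = \inf\{k : Z_k > 0\}$. Since the walk is upwards-skip-free, $Z_{T_+} = 1$, so $T_+ = T_1$ in the notation above and this reversed excursion has length $s_k = T_+$. I would now invoke Lemma \ref{lem:RandomWalkLastJumpTimeReverse} (or rather its time-reversal content): under $P^\nu_0$, conditionally on the value at time $T-1$, the reversed trajectory $\{Z_{T-1-k} : 0 \le k < T\}$ has the law of $\{Z_k : 0 \le k < T\}$ under $P^{\widehat\nu}$ started from $Z_{T-1}$. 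Applying this with the roles of ``staying nonnegative'' dualized — i.e.\ reading an excursion of $-Z$ below $0$ for $P^{\widehat\nu}$ as an excursion of $Z$ above $0$ under $P^\nu$, or equivalently transcribing the Lemma for the first-passage-above-zero excursion — identifies the reversed block $w_k$ with a copy of $\{Z_k : 0 \le k < T\}$ under $P^{\widehat\nu}_0$. Stitching the blocks back together with the translations by $k$ gives exactly the claimed decomposition, and the independence of the $Z^{(k)}$ is inherited directly from the independence of the $w_k$ in Tanaka's construction.

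The main obstacle, and the step I would spend the most care on, is the bookkeeping that reconciles the \emph{unit shift} $Y_k = Y'_{k+1} - 1$ with the definition of the record times $R_k = \sup\{i : Z_i \le k-1\}$ and the excursion intervals $[R_k+1, R_{k+1})$. One has to check that deleting the first step of $Y'$ moves the block boundaries by exactly one index in a way that makes $R_k$ land on the correct endpoints, and that the ``$-k$'' centering in $Z^{(k)}_i := Z_{R_{k+1}-i} - k$ matches the ``$-\sigma_{i-1}$'' implicit recentering of each $w_i$ in Tanaka's formula. This is purely combinatorial — a careful induction on $k$ over the block structure — but it is where an off-by-one error would creep in, so I would write it out explicitly; the probabilistic content is entirely contained in Tanaka's theorem and Lemma \ref{lem:RandomWalkLastJumpTimeReverse}, together with Remark \ref{rk:RWCSN} to pass from the ``stay positive'' to the ``stay nonnegative'' normalization.
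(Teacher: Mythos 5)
The paper offers the corollary with no separate argument, only the phrase ``Let us rephrase this theorem'' following Tanaka's construction; so your route — matching the future-infimum record times $R_k$ of the conditioned walk with the block endpoints in Tanaka's concatenation of i.i.d.\ reversed excursions — is exactly the intended one, and you correctly identify the unit shift $Y_k=Y'_{k+1}-1$ as the main bookkeeping to settle. (A clean way to record it: with $\Sigma_{i}:=\sum_{j\le i}s_j$ and using $w_j(s_j)=1$, one has $Y'_{\Sigma_{k-1}}=k$ and $Y'_m\ge k+1$ for $m>\Sigma_{k-1}$, whence the last time $Y\le k-1$ is $R_k=\Sigma_{k-1}-1$; the intervals $(R_k,R_{k+1}]$ then have lengths $s_k$ and sit inside the $k$-th block.)

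One step in your plan is off: you do not need Lemma~\ref{lem:RandomWalkLastJumpTimeReverse}, nor any ``dualized'' reversal argument, to identify the law of a single block. By skip-freeness $Z_{T_+}=1$, so Tanaka's $w_k$ satisfies $w_k(l)=1-Z_{T_+-l}$ for $0\le l\le s_k=T_+$ under $P^{\nu}_0$. The corollary's $Z^{(k)}_i=Z_{R_{k+1}-i}-k$ re-reverses $w_k$, and a direct substitution gives $Z^{(k)}_i=w_k(s_k-i)-1=-Z_i$ for $0\le i<T_+$. Negating a $P^{\nu}_0$ walk stopped at $T_+=\inf\{j:Z_j>0\}$ produces, by definition of $\widehat\nu$, exactly a $P^{\widehat\nu}_0$ walk stopped at $T=\inf\{j:Z_j<0\}$. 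So the identity in law you need is an algebraic change of sign — the time reversal built into $w_k$ and the time reversal in the definition of $Z^{(k)}$ cancel — and no overshoot/time-reversal lemma is invoked. Independence of the $Z^{(k)}$'s then is literal independence of the $w_k$'s. With this correction, and after actually carrying out the $R_k=\Sigma_{k-1}-1$ computation sketched above, your proof closes.
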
 The law of the conditioned random walk stopped at a first hitting time is explicit.

\begin{Lem}\label{lem:RWCSPkilled} Let $n\in\Z_+$. Under $P^{\uparrow}_0$, $\{Z_k : 0\leq k \leq T_n\}$ has distribution $P_0\left( \cdot \mid T_n<T \right)$. \end{Lem}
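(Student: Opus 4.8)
The plan is to compare the two laws path by path on finite sequences $(z_0,\ldots,z_m)$ with $z_0=0$, $z_i\in\{0,\ldots,n-1\}$ for $0\le i<m$, and $z_m=n$. Since the walk is upwards-skip-free, such a sequence is exactly a generic realization of the stopped path $\{Z_k:0\le k\le T_n\}$ on the event $\{T_n=m\}$. Under $P_0(\,\cdot\mid T_n<T)$ the stopped path is of this form by definition; under $P_0^{\uparrow}$ it is as well, because the random walk conditioned to stay nonnegative drifts to $+\infty$ (see \cite{bertoin_conditioning_1994,tanaka_time_1989}), so $T_n<T=+\infty$ almost surely. Hence it suffices to show that both measures give the same mass to each such path. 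The case $n=0$ is trivial ($T_0=0$, and $P_0(T_0<T)=1=1/V(0)$), so assume $n\ge 1$.

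\textbf{Step 1: the normalizing identity $P_0(T_n<T)=1/V(n)$.} Recall that the renewal function $V$ of \eqref{eqn:RenewalFunction} is harmonic for the random walk killed upon entering $(-\infty,0)$, i.e.\ $E_x\big(V(Z_1)\mathbf 1_{\{Z_1\ge 0\}}\big)=V(x)$ for all $x\ge 0$ (this is the renewal property of the strict ascending ladder height process of $-Z$; cf.\ \cite{bertoin_conditioning_1994}). Consequently $M_k:=V(Z_k)\mathbf 1_{\{k<T\}}$ is a $P_0$-martingale, with $M_0=V(0)=1$ (as $H_0=0$). Before time $T_n\wedge T$ the walk stays in $\{0,\ldots,n-1\}$, and $M$ vanishes from time $T$ on, so the stopped process $M_{k\wedge T_n\wedge T}$ is bounded by $V(n)<\infty$. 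As the step law is centered the walk is recurrent, hence $T<\infty$ and a fortiori $T_n\wedge T<\infty$ almost surely. Optional stopping (via bounded convergence) then gives $1=M_0=E_0\big(M_{T_n\wedge T}\big)=V(n)\,P_0(T_n<T)$, i.e.\ $P_0(T_n<T)=1/V(n)$.

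\textbf{Step 2: path comparison.} Fix a path $(z_0,\ldots,z_m)$ as above and apply the $h$-transform formula \eqref{eqn:HTransform} with $k=m$ and $F=\mathbf 1_{\{Z_0=z_0,\ldots,Z_m=z_m\}}$. Since all $z_i\ge 0$, the event $\{Z_0=z_0,\ldots,Z_m=z_m\}$ is contained in $\{m<T\}$, and since $z_m=n$ we get
\[
P_0^{\uparrow}\big(Z_0=z_0,\ldots,Z_m=z_m\big)=\frac{V(n)}{V(0)}\,P_0\big(Z_0=z_0,\ldots,Z_m=z_m\big)=V(n)\,P_0\big(Z_0=z_0,\ldots,Z_m=z_m\big).
\]
On the other hand this same event lies in $\{T_n<T\}$ (indeed $T_n=m<T$ there), so by Step 1
\[
P_0\big(Z_0=z_0,\ldots,Z_m=z_m\mid T_n<T\big)=\frac{P_0\big(Z_0=z_0,\ldots,Z_m=z_m\big)}{P_0(T_n<T)}=V(n)\,P_0\big(Z_0=z_0,\ldots,Z_m=z_m\big).
\]
Finally, on the event $\{Z_0=z_0,\ldots,Z_m=z_m\}$ one has $T_n=m$, so this event coincides with $\{\{Z_k:0\le k\le T_n\}=(z_0,\ldots,z_m)\}$; the two displays therefore show that the stopped path has the same law under $P_0^{\uparrow}$ and under $P_0(\,\cdot\mid T_n<T)$, which is the claim.

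\textbf{Main obstacle.} The only delicate point is Step 1: justifying the harmonicity of $V$ for the killed walk and the use of optional stopping (the integrability/domination bookkeeping for $M_k$). Here $V$ is nondecreasing with at most linear growth and the stopped martingale is bounded by $V(n)$, so bounded convergence applies; alternatively, one may simply invoke the classical identity $P_x(T_n<T)=V(x)/V(n)$ for $0\le x\le n$ from \cite{bertoin_conditioning_1994} and skip directly to Step 2.
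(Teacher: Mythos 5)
Your proof is correct, and it takes a genuinely different route from the paper's. The paper invokes the weak convergence $P_0(\,\cdot \mid T_m<T)\Rightarrow P_0^{\uparrow}$ from \cite[Theorem 1]{bertoin_conditioning_1994} and then observes, via the strong Markov property at $T_n$ (using that the walk is upwards-skip-free so $Z_{T_n}=n$ a.s.) together with the factorization $P_0(T_m<T)=P_0(T_n<T)P_n(T_m<T)$, that for $m>n$ the law of the stopped path $\{Z_k:0\le k\le T_n\}$ under $P_0(\,\cdot\mid T_m<T)$ is already independent of $m$ and equal to $P_0(\,\cdot\mid T_n<T)$; the identity then follows by passing to the limit. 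You instead compute both sides directly: you establish the hitting identity $P_0(T_n<T)=1/V(n)$ by optional stopping applied to the bounded martingale $V(Z_k)\mathbf 1_{\{k<T\}}$, and then compare path probabilities via the $h$-transform formula \eqref{eqn:HTransform}, exploiting that upwards-skip-freeness forces $V(Z_{T_n})=V(n)$ to be a constant. Both proofs lean on the same underlying renewal structure (harmonicity of $V$ for the killed walk, equivalently the multiplicativity of hitting probabilities); the paper's argument is shorter because it outsources the normalization to the convergence theorem, whereas yours is more self-contained and makes the constant $V(n)$ explicit, at the cost of the additional Step 1. Both are valid.

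One minor remark on your Step 1: it is worth noting explicitly that upwards-skip-freeness is what guarantees $Z_{T_n\wedge T}\in\{n\}\cup(-\infty,0)$, hence $M_{T_n\wedge T}=V(n)\mathbf 1_{\{T_n<T\}}$; without this the boundary value $V(Z_{T_n})$ would not be deterministic and the optional-stopping identity would only yield $E_0[V(Z_{T_n})\mathbf 1_{\{T_n<T\}}]=1$, not the clean product $V(n)P_0(T_n<T)=1$. You implicitly use this, but it deserves a word since the same feature is what makes Step 2 close (the factor $V(Z_m)=V(n)$ pulls out of the expectation).
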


\begin{proof}Let $k\in\Z_+$ and $x_0,\ldots,x_{k} \geq 0$. By \cite[Theorem 1]{bertoin_conditioning_1994}, 
\[P_0\left( T_n=k, Z_0=x_0,\ldots,Z_{k}=x_{k} \mid T_m<T \right) \underset{m \rightarrow \infty}{\longrightarrow} P_0^{\uparrow}\left(T_n=k, Z_0=x_0,\ldots,Z_{k}=x_{k}\right).\] It is thus sufficient to prove that for $m$ large enough, 
\[P_0\left( T_n=k, Z_0=x_0,\ldots,Z_{k}=x_{k} \mid T_m<T \right)=P_{0}\left(T_n=k, Z_0=x_0,\ldots,Z_{k}=x_{k} \mid T_n<T \right).\] We have $T_n<T_m$ whenever $m>n$, and $Z_{T_n}=n$ $P_0$-a.s.. The strong Markov property gives
\begin{align*}
  P_0&\left(T_n=k, Z_0=x_0,\ldots, T_m<T \right)=P_0\left(T_n=k, Z_0=x_0,\ldots, T_n<T \right)P_n\left(T_m<T \right).\end{align*} We conclude the proof by using the identity $P_0\left(T_m<T \right)=P_0\left(T_n<T \right)P_n\left(T_m<T \right)$.\end{proof}
  
We now deal with the conditioned random walk started at large values. For every $x\geq 0$, let $\phi_x$ be defined for every $y\in\R^\N$ by $\phi_x(y)=(y_i-x : i\in\N)$. We use the notation $f_\ast P$ for the pushforward measure of $P$ by the function $f$.
 
\begin{Lem}\label{lem:ConvergenceRWCSPLargeX}In the sense of weak convergence of finite-dimensional distributions,
\[(\phi_{x})_\ast P^\uparrow_x \underset{x \rightarrow \infty}{\Longrightarrow} P_0.\] 

\end{Lem}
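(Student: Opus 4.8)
The plan is to establish this convergence by combining Tanaka's pathwise construction (Corollary~\ref{cor:DecompositionRWCSFutureInfimum}) with a renewal-theoretic argument. Under $P^\uparrow_x$, the walk $\{Z_k\}$ decomposes via its future infimum into successive reversed excursions $\{Z^{(k)}\}$ that are i.i.d.\ copies of $\{Z_k : 0 \le k < T\}$ under $P^{\widehat\nu}_0$; since $\widehat\nu$ has no negative jumps, each such excursion begins at height $k$ and ends strictly below $k$, i.e.\ the walk passes through every integer level. Starting from $x$, the walk first traverses the (possibly non-full) reversed excursion straddling level $x$, then the i.i.d.\ blocks indexed by $k = x, x+1, \dots$. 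The key point is that the excursion straddling a large level $x$ is, up to the size-biasing inherent in stationary renewal sampling, negligible: one wants to show that the amount of ``extra'' path below level $x$ that the walk sees before reaching level $x$ for the last time (from below) vanishes in the finite-dimensional sense after recentering.

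First I would write the finite-dimensional law explicitly. Fix $n$ and a bounded continuous $F : \mathbb{R}^{n+1} \to \mathbb{R}$. Using \eqref{eqn:HTransform},
\[
E^\uparrow_x\big(F(\phi_x(Z_0,\dots,Z_n))\big) = \frac{1}{V(x)} E_x\Big(V(Z_n)\, F(Z_0 - x,\dots,Z_n - x)\, \mathbf{1}_{n < T}\Big).
\]
After the shift $Z_k \mapsto Z_k + x$ this becomes $\frac{1}{V(x)} E_0\big(V(Z_n + x)\, F(Z_0,\dots,Z_n)\, \mathbf{1}_{n < T_{-x}}\big)$ where $T_{-x}$ is the hitting time of $-x$. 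As $x \to \infty$, $\mathbf{1}_{n < T_{-x}} \uparrow 1$ pointwise (the walk started at $0$ almost surely stays above $-x$ for the first $n$ steps once $x$ is large), so by dominated convergence it suffices to prove
\[
\frac{V(Z_n + x)}{V(x)} \longrightarrow 1 \quad \text{in } P_0\text{-probability (and in } L^1 \text{ against the bounded } F).
\]

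Second, I would invoke the renewal theorem. The function $V$ of \eqref{eqn:RenewalFunction} is the renewal function of the strict ascending ladder height process $H$ of $-Z$; since $\nu$ is centered, $-Z$ oscillates and $H$ has finite mean ladder height (this is where the skip-free assumption helps: ladder heights are bounded and $E[H_1]<\infty$ follows from $\nu$ being centered with no positive jumps, via the standard ladder-height moment identity). The renewal theorem then gives $V(x) \sim x / E[H_1]$ as $x \to \infty$, in particular $V(x+h)/V(x) \to 1$ for any fixed $h$, uniformly for $h$ in compact sets. Since $Z_n$ under $P_0$ is a fixed (tight) random variable, $V(Z_n + x)/V(x) \to 1$ in $P_0$-probability, and boundedness of the ratio (which follows from subadditivity of $V$: $V(a+b) \le V(a)+V(b)$, and $V$ nondecreasing, giving $V(Z_n+x) \le V(x) + V((Z_n)_+)$ with $V((Z_n)_+)$ integrable by Wald-type bounds since $V$ grows linearly) lets us upgrade to $L^1$ convergence. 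Assembling the pieces yields $E^\uparrow_x(F \circ \phi_x) \to E_0(F)$, which is the claim.

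The main obstacle I anticipate is the uniform integrability needed to pass from convergence in probability of $V(Z_n+x)/V(x)$ to convergence of the expectation: one must control the ratio $V(Z_n+x)/V(x)$ uniformly in $x$, which requires both a lower bound $V(x) \ge c x$ for large $x$ and the integrability of $V((Z_n)_+)$. The linear lower bound on $V$ is again the renewal theorem (or Kesten's renewal bound), and integrability of $V((Z_n)_+)$ follows because $V$ is eventually linear and $Z_n$ has a finite first moment under $P_0$ (each step has mean zero, so $E_0|Z_n| \le \sum E|\text{step}| < \infty$ provided $\nu$ has a first absolute moment --- which, if not assumed outright, holds in all the concrete applications in this paper where $\nu$ has exponential tails). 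A cleaner alternative avoiding moment assumptions on $\nu$ is to replace dominated convergence by a direct coupling: realize $P^\uparrow_x$ and $P^\uparrow_{x'}$ on the same probability space via Tanaka's construction so that for $x' > x$ they agree above level $x'$ after a coupling time that is tight in $x$; then $(\phi_x)_\ast P^\uparrow_x$ is Cauchy in the finite-dimensional topology and its limit is identified by letting $x \to \infty$ along the subsequence where the straddling excursion is empty, on which the law is exactly $P_0$ restricted to $\{n < T_{-x}\} \uparrow$ everything.
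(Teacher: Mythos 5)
The overall strategy is the same as the paper's: apply the h-transform identity \eqref{eqn:HTransform}, observe that the survival indicator becomes trivial for large $x$, and reduce to showing that $V(\cdot + h)/V(\cdot) \to 1$. However, your justification of this last step has a genuine gap. You argue via the renewal theorem that $V(x) \sim x/E[H_1]$, asserting that the ladder heights $H_1$ are bounded so that $E[H_1]<\infty$. This is wrong on both counts. The process $H$ is the strict ascending ladder height process of $-Z$; since $Z$ is upwards-skip-free, $-Z$ has bounded \emph{negative} jumps but unbounded \emph{positive} jumps, so its ascending ladder heights are unbounded. (It is the ascending ladder heights of $Z$ itself that are bounded by $1$.) Worse, in the paper's application $\nu=\mu$ has tails $\mu(-k)\asymp k^{-5/2}$, so the walk is in the domain of attraction of a $3/2$-stable law and $E[H_1]=\infty$; the asymptotic $V(x)\sim x/E[H_1]$ then simply fails. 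The conclusion $V(x+h)/V(x)\to 1$ is nonetheless true, but the correct renewal-theoretic input in the infinite-mean lattice case is the Erd\H{o}s--Feller--Pollard theorem ($V(x+1)-V(x)\to 0$), combined with $V(x)\to\infty$, which you never invoke.

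The rest of your plan (uniform integrability, subadditivity of $V$, etc.) is also burdened by the bounded-continuous-$F$ framing. Since we are proving finite-dimensional convergence of laws on $\Z^{k+1}$, it suffices to track point masses, as the paper does: then $x_k$ is a \emph{fixed} integer in the formula
\[
(\phi_x)_\ast P^\uparrow_x(Z_0=x_0,\ldots,Z_k=x_k)=\frac{V(x+x_k)}{V(x)}\,P_0(Z_0=x_0,\ldots,Z_k=x_k),
\]
valid for $x$ large enough that all $x_i+x\ge 0$, and the ratio $V(x+x_k)/V(x)$ is a deterministic quantity with no $L^1$ or UI issue at all. The paper then finishes elementarily from the duality representation \eqref{eqn:RenewalFunction}, writing $V(x+x_k)-V(x)$ as an expected occupation count in a sliding window and showing it vanishes while $V(x)\to\infty$ (from $E_0(T_0)=\infty$). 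You should restructure the argument along those lines; the renewal theorem as you state it is not available here, and your claim that ``$V$ grows linearly'' is also false in this infinite-ladder-mean regime ($V(x)=o(x)$).
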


\begin{proof}Let $k\in\Z_+$ and $x_0,\cdots,x_{k} \in \Z$. From \eqref{eqn:HTransform}, we have \[(\phi_{x})_\ast P^\uparrow_x(Z_0=x_0,\ldots,Z_k=x_k)=\frac{V(x+x_k)}{V(x)}P_x(k>T,Z_0=x+x_0,\ldots,Z_k=x+x_k).\] Up to choosing $x$ large enough, we can assume that $x_i+x\geq 0$ for $i\in\{ 0,\ldots,k \}$ and get  \[(\phi_{x})_\ast P^\uparrow_x(Z_0=x_0,\ldots,Z_k=x_k)=\frac{V(x+x_k)}{V(x)}P_0(Z_0=x_0,\ldots,Z_k=x_k).\] We now assume that $x_k\in\Z_+$ (the case $x_k\in\Z_-$ can be treated similarly). We have \[\frac{V(x+x_k)}{V(x)}=1+\frac{1}{V(x)}E_0\left(\sum_{k=0}^{T_0-1}{\mathbf{1}_{\{-x-x_k \leq Z_k< -x\}}}\right).\] By monotone convergence, since $T_0<\infty$ $P_0$-a.s., \[ E_0\left(\sum_{k=0}^{T_0-1}{\mathbf{1}_{\{-x-x_k \leq Z_k< -x\}}}\right)\leq E_0\left(\sum_{k=0}^{T_0-1}{\mathbf{1}_{\{Z_k< -x\}}}\right)\underset{x\rightarrow \infty}{\longrightarrow}0.\] By \eqref{eqn:RenewalFunction} and monotone convergence once again, since $E_0(T_0)=\infty$, $V(x)\rightarrow \infty$ as $x\rightarrow \infty$, which concludes the proof.\end{proof}

\bigskip

\noindent\textbf{Random walk with positive drift.} We consider random walks with positive drift conditioned to stay nonnegative. Let $\nu$ and $\{\nu_p : p\in \Z_+\}$ be upwards-skip-free probability measure, such that $\nu$ is centered, $m_p:=E^{\nu_p}_0(Z_1)>0$ and $\nu_p \Rightarrow \nu$ weakly as $p\rightarrow \infty$. The random walk conditioned to stay nonnegative is well defined in the usual sense, since $P^{\nu_p}_0(T=\infty)>0$ for $p\in \Z_+$. We denote its law by $P_x^{\nu_p \uparrow}$. It is also the h-transform of $P^{\nu_p}_x$ by the renewal function $V_p$ associated to the strict ascending ladder height process of $-Z$, which satisfies 
\[V_p(x)=\frac{P^{\nu_p}_x(T=\infty)}{P^{\nu_p}_0(T=\infty)}, \quad x\geq 0, \ p\in \Z_+.\] We let $T_{-n}:=\inf \left\lbrace k\in\N : Z_k\leq -n \right\rbrace$ for every $n\in\N$.

\begin{Lem}\label{lem:ConvergenceRWCSPDrift} For $x\in \Z_+$, in the sense of weak convergence of finite-dimensional distributions,
\[P^{\nu_p \uparrow}_x \underset{p \rightarrow +\infty}{\Longrightarrow} P^{\nu \uparrow}_x.\] 

\end{Lem}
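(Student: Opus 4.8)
The plan is to prove convergence of the finite-dimensional marginals directly from the $h$-transform formula \eqref{eqn:HTransform}, reducing the statement to the pointwise convergence $V_p\to V$ of the two renewal functions. Fix $k\in\Z_+$ and integers $x_0,\ldots,x_k$. If some $x_i$ is negative then both $P^{\nu_p\uparrow}_x(Z_0=x_0,\ldots,Z_k=x_k)$ and $P^{\nu\uparrow}_x(Z_0=x_0,\ldots,Z_k=x_k)$ vanish, since the indicator $\mathbf{1}_{k<T}$ in \eqref{eqn:HTransform} forces the trajectory to stay nonnegative. Otherwise \eqref{eqn:HTransform} gives
\[
P^{\nu_p\uparrow}_x(Z_0=x_0,\ldots,Z_k=x_k)=\mathbf{1}_{\{x_0=x\}}\,\frac{V_p(x_k)}{V_p(x)}\prod_{i=1}^{k}\nu_p(x_i-x_{i-1}),
\]
and the same identity holds with $(\nu,V)$ replacing $(\nu_p,V_p)$. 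Since $\nu_p\Rightarrow\nu$ and all these measures are supported on $\Z$, we have $\nu_p(m)\to\nu(m)$ for every $m\in\Z$, so every factor $\nu_p(x_i-x_{i-1})$ converges; it therefore suffices to prove $V_p(y)\to V(y)$ for each fixed $y\in\Z_+$, which also takes care of the denominator because $V(0)=1>0$.

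To handle $V_p\to V$, I would use the ladder-height series defining both renewal functions. Let $G$ (resp.\ $G_p$) be the law of the first strict descending ladder height $-Z_T$ of the walk under $P^{\nu}_0$ (resp.\ its restriction to $\{T<\infty\}$ under $P^{\nu_p}_0$): then $V(y)=\sum_{j\geq0}G^{*j}([0,y])$ and $V_p(y)=\sum_{j\geq0}G_p^{*j}([0,y])$, where $G$ is a probability measure on $\N$ (the $\nu$-walk being recurrent) while $G_p$ is sub-probability of total mass $P^{\nu_p}_0(T<\infty)<1$, which is harmless. The point is that, because ladder heights are at least $1$, only $j\leq y$ contributes, and each $G^{*j}([0,y])$, $G_p^{*j}([0,y])$ is a \emph{finite} sum of $j$-fold products of the numbers $G(i)$, $G_p(i)$, $1\leq i\leq y$. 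So it is enough to prove $G_p(i)\to G(i)$ for every $i\in\N$.

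For this I would re-run the path-reversal argument of Lemma \ref{lem:RandomWalkLastJumpTimeReverse} for $\nu_p$. That argument uses only that the reversed step law $\widehat{\nu_p}$ is downwards-skip-free (true since $\nu_p$ is upwards-skip-free) and that the $\widehat{\nu_p}$-walk a.s.\ reaches $(-\infty,0)$ (true since $\widehat{\nu_p}$ has mean $-m_p<0$, the negative drift playing the role of recurrence), and it forces $Z_{T-1}=0$ on $\{T<\infty\}$ by the skip-free property. One obtains
\[
G_p(i)=P^{\nu_p}_0(-Z_T=i,\,T<\infty)=\frac{\nu_p((-\infty,-i])}{\nu_p(1)},\qquad i\in\N,
\]
and likewise $G(i)=\nu((-\infty,-i])/\nu(1)$. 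Since $-i-\tfrac12$ is a continuity point of every distribution function on $\Z$, $\nu_p\Rightarrow\nu$ gives $\nu_p((-\infty,-i])\to\nu((-\infty,-i])$, together with $\nu_p(1)\to\nu(1)>0$; hence $G_p(i)\to G(i)$. Passing this through the finite sums of the previous paragraph yields $G_p^{*j}([0,y])\to G^{*j}([0,y])$ and thus $V_p(y)\to V(y)$; plugging into the first paragraph gives
\[
P^{\nu_p\uparrow}_x(Z_0=x_0,\ldots,Z_k=x_k)\longrightarrow\mathbf{1}_{\{x_0=x\}}\,\frac{V(x_k)}{V(x)}\prod_{i=1}^{k}\nu(x_i-x_{i-1})=P^{\nu\uparrow}_x(Z_0=x_0,\ldots,Z_k=x_k).
\]

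The step I expect to require the most care is the transfer of Lemma \ref{lem:RandomWalkLastJumpTimeReverse} to the transient regime: one must check that the overshoot identity genuinely holds with the restriction $\{T<\infty\}$ inserted, and that the defective descending-ladder series is still exactly the renewal function $V_p$ entering \eqref{eqn:HTransform} (i.e.\ the one attached to the strict ascending ladder heights of $-Z$ under $\nu_p$). Everything else — convergence of finitely many mass functions, continuity points of lattice distribution functions, finiteness of the convolution sums — is routine.
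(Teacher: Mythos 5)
Your proof is correct, and it replaces the paper's key step by a genuinely different argument. Both proofs begin identically: apply the $h$-transform identity \eqref{eqn:HTransform}, note that the trajectory probabilities $P^{\nu_p}_x(k>T,Z_0=x_0,\ldots,Z_k=x_k)$ converge termwise by $\nu_p\Rightarrow\nu$ on $\Z$, and reduce everything to showing $V_p(y)\to V(y)$ for each fixed $y\in\Z_+$. Where you diverge is in establishing $V_p\to V$. The paper keeps the expectation form $V_p(y)=E^{\nu_p}_0\bigl(\sum_{k\ge0}\mathbf{1}_{\{H_k\le y\}}\bigr)$ and splits it using a time cutoff $K$: the part measurable with respect to the first $K$ steps converges by weak convergence, while the remainder is controlled by $(y+1)P^{\nu_p}_0(T_{-y}>K)$, which is driven to zero by first sending $p\to p_c$ and then $K\to\infty$ (using that $T_{-y}<\infty$ a.s.\ under the limit $\nu$). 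You instead expand $V_p$ as the renewal series $\sum_{j\ge0}G_p^{*j}([0,y])$ in the (defective) descending ladder-height law $G_p$, observe that ladder heights are $\ge1$ so only $j\le y$ contributes and each term is a finite polynomial in $G_p(1),\ldots,G_p(y)$, and then prove $G_p(i)\to G(i)$ via the explicit overshoot formula $G_p(i)=\nu_p((-\infty,-i])/\nu_p(1)$ obtained by rerunning the path-reversal argument of Lemma~\ref{lem:RandomWalkLastJumpTimeReverse} in the drifted, transient regime. The two extra checks you flag — that the reversal identity holds with the restriction to $\{T<\infty\}$, and that the defective ladder series really is the $V_p$ of \eqref{eqn:RenewalFunction} — do go through (the reversal computation in the paper never uses centering, only upward-skip-freeness and the a.s.\ finiteness of $T$ for the \emph{reversed} walk, which holds here because $\widehat{\nu_p}$ has negative drift; and $H_1=-Z_T$ so the series in \eqref{eqn:RenewalFunction} is precisely $\sum_j G_p^{*j}([0,y])$ by the regenerative structure). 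Your route avoids the paper's double limit in $K$ and $p$, at the cost of needing the closed-form overshoot law; the paper's truncation argument is more robust in that it never needs to compute $G_p$ explicitly. Both are sound.
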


\begin{proof}Let $x,k\in\Z_+$ and $x_0,\cdots,x_{k} \in \Z_+$. By \eqref{eqn:HTransform},
\begin{equation}\label{eqn:ConvergenceRWCSPDrift}
	P^{\nu_p \uparrow}_x(Z_0=x_0,\ldots,Z_k=x_k)=\frac{V_p(x_k)}{V_p(x)}P^{\nu_p}_x(k>T,Z_0=x_0,\ldots,Z_k=x_k).
\end{equation} Since $\nu_p \Rightarrow \nu$ weakly, we have
\[P^{\nu_p}_x(k>T,Z_0=x_0,\ldots,Z_k=x_k)\underset{p \rightarrow \infty}{\longrightarrow}P^{\nu}_x(k>T,Z_0=x_0,\ldots,Z_k=x_k).\] By \eqref{eqn:RenewalFunction}, for every $x,k\in\Z_+$,
\[V_p(x)=E^{\nu_p}_0\left(\sum_{k=0}^\infty{\mathbf{1}_{\{H_k\leq x\}}}\right)=E^{\nu_p}_0\left(\mathbf{1}_{\{T_{-x}\leq K\}}\sum_{k=0}^\infty{\mathbf{1}_{\{H_k\leq x\}}}\right)+E^{\nu_p}_0\left(\mathbf{1}_{\{T_{-x}> K\}}\sum_{k=0}^\infty{\mathbf{1}_{\{H_k\leq x\}}}\right).\] The first variable being measurable with respect to the $K$ first steps of $Z$, we get \[E^{\nu_p}_0\left(\mathbf{1}_{\{T_{-x}\leq K\}}\sum_{k=0}^\infty{\mathbf{1}_{\{H_k\leq x\}}}\right) \underset{p \rightarrow \infty}{\longrightarrow} E^{\nu}_0\left(\mathbf{1}_{\{T_{-x}\leq K\}}\sum_{k=0}^\infty{\mathbf{1}_{\{H_k\leq x\}}}\right).\] Since $H$ is the \emph{strict} ascending ladder height process of $-Z$ and $Z$ takes only integer values, 
\[E^{\nu_p}_0\left(\mathbf{1}_{\{T_{-x}> K\}}\sum_{k=0}^\infty{\mathbf{1}_{\{H_k\leq x\}}}\right)\leq (x+1)P^{\nu_p}_0\left(T_{-x}> K\right)\underset{p \rightarrow \infty}{\longrightarrow}(x+1)P^{\nu}_0\left(T_{-x}> K\right).\] As a consequence, $\limsup_{p\rightarrow\infty}\vert V_p(x) - V(x) \vert \leq 2(x+1)P^{\nu}_0\left(T_{-x}> K\right)$. Furthermore, $T_{-x}$ is finite $P^{\nu}_0$-a.s., so that $V_p(x)\rightarrow V(x)$ as $p\rightarrow \infty$ for every $x\geq 0$. Applying this to \eqref{eqn:ConvergenceRWCSPDrift} together with \eqref{eqn:HTransform} yields the expected result.\end{proof}

\subsection{Contour functions of random looptrees.}\label{sec:ContourFunctionRandom} Let $\nu$ be a centered upwards-skip-free probability measure on $\Z$ such that $\nu(0)=0$. We define the probability measures $\nuw$ and $\nub$ (with means $m_\circ$ and $m_\bullet$) by
\begin{equation}\label{eqn:DefinitionNuwNub}
	\nuw(k):=\nu(1)\left(1-\nu(1)\right)^k \quad \text{and} \quad \nub(k):=\frac{\nu(-k)}{1-\nu(1)}, \quad k\in\Z_+.  
\end{equation} The fact that $\nu$ is centered entails $m_\circ m_\bullet=1$, i.e.\ the pair $(\nuw,\nub)$ is critical.
 
\bigskip

\noindent\textbf{Finite looptrees.} We consider a random walk $\{\widehat{C}_k : k\in\Z_+\}$ with law $P^{\widehat{\nu}}_0$ (and $\widehat{\nu}:=\nu(-\cdot)$). We let $T:=\inf \{ k\in\Z_+ : \widehat{C}_k<0 \}$, and $C=\{C_k : 0\leq k < T\}:=\{\widehat{C}_{T-1-k} : 0\leq k < T\}$. 

\begin{Lem}\label{lem:FiniteTree}
The tree of components $\Tr_{C}=\Tree(\Lt_C)$ of the looptree $\Lt_{C}$ has law $\GWnn$.\end{Lem}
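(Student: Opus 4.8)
The plan is to reverse-engineer the looptree/tree dictionary: I want to compute $\P(\Tr_C = \mathbf{t})$ for a fixed finite two-type plane tree $\mathbf{t}$ and check it equals $\prod_{v\in\mathbf{t}_\circ}\nuw(k_v(\mathbf{t}))\prod_{v\in\mathbf{t}_\bullet}\nub(k_v(\mathbf{t}))$. The first step is to recall from Section~\ref{sec:ContourFunction} how $C=\{C_k:0\le k<T\}$ (a discrete excursion with no positive jumps and no constant step, obtained by time-reversing $\{\widehat C_k\}$ up to time $T-1$ under $P^{\widehat\nu}_0$) encodes the looptree $\Lt_C$, and hence $\Tr_C=\Tree(\Lt_C)$. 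Concretely: the white vertices of $\Tr_C$ are the $\sim$-classes (local minima records of $C$), and the black vertices are the internal faces of $\m_C$. I would first translate "$\Tr_C=\mathbf{t}$" into a deterministic condition on the excursion path $C$: the excursion decomposes, via its successive returns to the running infimum, into sub-excursions corresponding to the subtrees hanging below the white vertices incident to the root black face, and recursively each sub-excursion is cut at the points where its own path achieves new infima. This is the standard Łukasiewicz-type decomposition, but adapted to the fact that here \emph{unit upward steps} of $C$ (i.e.\ unit downward steps of $\widehat C$) play the role of "the number of white children minus one" within a loop, while the \emph{size of a downward jump} of $C$ records the degree of a loop, i.e.\ the number of children of a black vertex.

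The second step is to reverse the time-reversal. By Lemma~\ref{lem:RandomWalkLastJumpTimeReverse} (more precisely the remark following it), under $P^{\widehat\nu}_0$ the reversed path $\{\widehat C_{T-1-k}:0\le k\le T\}$ — i.e.\ our $C$ extended by one final step to $-1$ — has the same law as $\{C_k:0\le k\le T\}$ under $P^\nu_0$ where now the walk has step law $\nu$ (no positive jumps in the \emph{original} orientation means no positive jumps for $C$, consistent with the excursion being of the required type). So it suffices to compute the probability, under $P^\nu_0$ killed at $T=\inf\{k:C_k<0\}$, that the excursion path $\{C_k:0\le k<T\}$ has the prescribed decomposition shape. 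Since each step of the walk is i.i.d.\ with law $\nu$, the probability of a fixed path of length $T$ is the product of $\nu$ evaluated at its increments, times nothing else (the killing is automatic once the path is prescribed). Now group the increments: an increment of $+1$ contributes $\nu(1)$, and there are, along the whole excursion, exactly $\sum_{v\in\mathbf{t}_\circ}(k_v(\mathbf{t}))$ such $+1$-steps coming from "inside loops" — wait, more carefully: each white vertex $v$ with $k_v$ black children sits inside... I would instead organize it as: each loop of perimeter $\ell$ (black vertex of degree $\ell$, so $\ell-1 = $ its number of white children... here degree conventions need care) is traversed by a run of $+1$-steps and closed by one down-jump; summing over all loops reconstitutes the full increment sequence. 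Matching the $+1$-step count to white-vertex offspring and the down-jump sizes to black-vertex offspring, the path probability factorizes as $\prod_{v\in\mathbf{t}_\bullet}\nu(-k_v(\mathbf{t}))\cdot \nu(1)^{\#\{+1\text{-steps}\}}$, and one checks $\#\{+1\text{-steps}\} = \sum_{v\in\mathbf{t}_\circ}k_v(\mathbf{t})$ and the number of loops visited with down-jump $k$ is exactly $\#\{v\in\mathbf{t}_\bullet: k_v(\mathbf{t})=k\}$, possibly up to a shift by the boundary/root conventions of $\Loop$ and $\Tree$.

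The third step is the bookkeeping that turns $\nu(1)^{\sum k_v(\mathbf{t})}$ and $\prod \nu(-k_v(\mathbf{t}))$ into $\prod_{\mathbf{t}_\circ}\nuw(k_v)\prod_{\mathbf{t}_\bullet}\nub(k_v)$ using the definitions \eqref{eqn:DefinitionNuwNub}: $\nuw(k)=\nu(1)(1-\nu(1))^k$ and $\nub(k)=\nu(-k)/(1-\nu(1))$. The factors $\nu(1)$ and $(1-\nu(1))$ must balance: there should be one factor $\nu(1)$ per white vertex and one factor $(1-\nu(1))^{-1}$ per black vertex, and since $\#\mathbf{t}_\circ = \#\mathbf{t}_\bullet + (\text{something fixed, like }1)$ by the alternating structure, and $\sum_{v\in\mathbf{t}_\bullet}k_v = \#\mathbf{t}_\circ - 1$ while $\sum_{v\in\mathbf{t}_\circ}k_v = \#\mathbf{t}_\bullet$, the powers of $(1-\nu(1))$ cancel exactly and the powers of $\nu(1)$ come out right. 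I expect the genuinely delicate point — the main obstacle — to be getting all the rooting conventions of $\Loop$, $\Tree$, $\m_C$, and the time-reversal to line up so that the excursion path of length $T$ is in \emph{exact} bijection with trees $\mathbf{t}$ of a given size (no off-by-one in the loop perimeters versus black-vertex degrees, and correctly handling the root loop and the white root vertex), rather than any probabilistic subtlety; once the bijection is pinned down, the probability computation is a one-line i.i.d.-product factorization followed by the algebraic identity above. I would therefore spend most of the write-up carefully stating the deterministic encoding bijection (perhaps isolating it as a sub-lemma) and then invoke Lemma~\ref{lem:RandomWalkLastJumpTimeReverse} and a direct computation to conclude.
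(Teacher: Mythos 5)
The paper proves this lemma \emph{recursively}: it applies the strong Markov property directly to $\widehat C$ to identify the offspring distribution of the root (geometric, i.e.\ $\nuw$) and of a generic black vertex (as $\widehat\nu$ conditioned to be positive, i.e.\ $\nub$), and then observes that the sub-excursions coding the descendant subtrees are i.i.d.\ copies of the whole. Your plan is instead \emph{global}: fix $\mathbf t$, identify the unique path of the coding walk giving $\Tr_C=\mathbf t$, and compute its probability as an i.i.d.\ product. That is a legitimate alternative strategy, but as written it contains two concrete errors that together break the computation.

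First, the time-reversal step is misapplied. Lemma~\ref{lem:RandomWalkLastJumpTimeReverse} and the remark after it say: if $Z$ has law $P^\nu_0$ with $\nu$ \emph{upward}-skip-free, then the reversed path has law $P^{\widehat\nu}_{Z_{T-1}}$ conditionally on $Z_{T-1}$. You want the converse, starting from $\widehat C\sim P^{\widehat\nu}_0$ and reversing; but $\widehat\nu$ is not upward-skip-free, so the hypothesis fails. What is actually true is that $C$ has the law of a walk under $P^\nu_0$ \emph{conditioned on $\{Z_{T-1}=0\}$}, not under $P^\nu_0$. The conditioning has probability $(1-\nu(1))/\nu(1)<1$ in general, so computing the unconditional path probability under $P^\nu_0$ gives an answer off by a constant (in particular it does not sum to $1$ over $\mathbf t$). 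The clean fix is to not reverse at all: compute the probability of the corresponding $\widehat C$-path directly under $P^{\widehat\nu}_0$, where, because $\widehat\nu$ is \emph{downward}-skip-free, $\widehat C_{T-1}=0$ and $\widehat C_T=-1$ are automatic and every path is an excursion closing at level $0$, in bijection with the finite trees.

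Second, the step count is wrong: the number of $+1$-steps of $C$ (equivalently, of $-1$-steps of $\widehat C$ before the final exit step) is $\sum_{v\in\mathbf t_\bullet}k_v(\mathbf t)=\#\mathbf t_\circ-1$, since the excursion returns to $0$ and the up-steps must balance the sizes of the down-jumps; it is not $\sum_{v\in\mathbf t_\circ}k_v(\mathbf t)=\#\mathbf t_\bullet$ as you claim. (The remark ``$\#\mathbf t_\circ=\#\mathbf t_\bullet+1$'' is likewise false in general.) With the correct count, the $\widehat C$-path probability under $P^{\widehat\nu}_0$ is $\nu(1)^{\#\mathbf t_\circ}\prod_{v\in\mathbf t_\bullet}\nu(-k_v)$ (the extra factor of $\nu(1)$ coming from the forced final step $\widehat C_{T-1}\to\widehat C_T=-1$), which does match $\prod_{\mathbf t_\circ}\nuw(k_v)\prod_{\mathbf t_\bullet}\nub(k_v)$ after the $(1-\nu(1))$'s cancel using $\sum_{\mathbf t_\circ}k_v=\#\mathbf t_\bullet$. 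So the overall strategy is salvageable, but as stated it does not go through.
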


\begin{proof}The excursion $C$ satisfies a.s.\ the assumptions of Section \ref{sec:ContourFunction} and defines a looptree $\Lt_C$. By construction, the number of offspring of the root vertex in $\Tr_{C}$ is the number of excursions of $\widehat{C}$ above zero before $T$: $k_{\emptyset}(\Tr_{C})=\inf \{ k\in\Z_+ : \widehat{C}_{\sigma_k+1}<0 \}$, where $\sigma_0=0$ and for every $k\in\Z_+$, $\sigma_{k+1}=\inf \{ i > \sigma_{k} : \widehat{C}_i\leq 0 \}$. By the strong Markov property, $k_{\emptyset}(\Tr_{C})$ has geometric distribution with parameter $\widehat{\nu}(-1)$, which is exactly $\nuw$. The descendants of the children of the root are coded by the excursions $\{\widehat{C}_{\sigma_{k+1}-i} :0\leq i \leq \sigma_{k+1}-\sigma_{k}\}$ for $0\leq k < k_{\emptyset}(\Tr_{C})$ and are i.i.d.. Thus, we focus on the child of the root $v$ coded by the first of these excursions (i.e., the first child if $k_{\emptyset}(\Tr_{C})=1$, the second otherwise).

The number of offspring of $v$ is $k_{v}(\Tr_{C})=\widehat{C}_1$ (conditionally on $\{T>1\}$, i.e., the root vertex has at least one child). Its law is $\widehat{\nu}$ conditioned to take positive values, which is $\nub$. The descendants of the children of $v$ are coded by the excursions $\{\widehat{C}_{\sigma'_{k+1}-i-1} : 0\leq i < \sigma'_{k+1}-\sigma'_{k}\}$ for $0\leq k < k_{v}(\Tr_C)$ (where $\sigma'_{0}=1$ and $\sigma'_{k+1}=\inf \{ i\geq \sigma'_{k} : \widehat{C}_i<\widehat{C}_{\sigma'_k} \}$). These excursions are independent with the same law as $\{C_k : 0\leq k < T\}$, which concludes the argument.\end{proof}

\begin{Rk}\label{rk:Forest} We can extend Lemma \ref{lem:FiniteTree} to $C=\{\widehat{C}_{T-1-k} : 0\leq k < T\}$ where $\widehat{C}$ has distribution $P^{\widehat{\nu}}_x$ with $x>0$ (using the construction of Section \ref{sec:ContourFunction}). By decomposing $\widehat{C}$ into its excursions above its infimum (as in \eqref{eqn:ExcursionIntervals}), we get a forest $\mathbf{F}_C$ of $x+1$ independent looptrees, whose trees of components have distribution $\GWnn$ (see Figure \ref{fig:FiniteForest} for an illustration).\end{Rk}

\begin{figure}[!ht]
\centering
\includegraphics[scale=1.5]{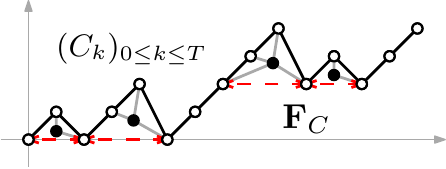}
\caption{The construction of the forest $\mathbf{F}_C$ from the excursion $C$ with $C_{T-1}=4$.}
\label{fig:FiniteForest}
\end{figure}

\bigskip

\noindent\textbf{Infinite looptrees.} We first consider a random walk $C=\{C_k : k\in\Z_+\}$ with law $P^\nu_0$. 

\begin{Prop}\label{prp:ComponentTree}
The tree of components $\Tr_C=\Tree(\Lt_C)$ of the looptree $\Lt_C$ has law $\GWnn^{(\infty,l)}$.\end{Prop}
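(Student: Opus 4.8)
The plan is to bootstrap the statement from the finite case (Lemma~\ref{lem:FiniteTree} together with the forest extension of Remark~\ref{rk:Forest}) by decomposing the unconditioned walk $C$ with law $P^\nu_0$ along its future infimum, exactly as in Corollary~\ref{cor:DecompositionRWCSFutureInfimum} but now for the \emph{unconditioned} (centered) walk rather than the $\uparrow$-conditioned one. First I would recall the construction of Section~\ref{sec:ContourFunction}: $C$ is extended to $\Z$ by $C_k=k$ on $\Z_-$, so that $\Tr_C$ has a unique spine, carried by the white vertices associated to the strict descending ladder epochs $\tau_k$ of \eqref{eqn:ExcursionIntervals}, and there is no vertex on the left of the spine. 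Since $\nu$ is centered and upwards-skip-free, $\liminf C_k=-\infty$ a.s., so \eqref{eqn:LimInf} holds and $\Tr_C$ is a.s.\ an infinite, one-ended, locally finite plane tree with a single spine, i.e.\ a candidate for $\GWnn^{(\infty,l)}$.

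The core computation is to identify, simultaneously and independently, (i) the offspring laws along the spine and (ii) the subtrees hanging off the spine. Between two consecutive ladder epochs $\tau_k$ and $\tau_{k+1}$, the walk makes an excursion above its running infimum starting at height $C_{\tau_k}$ and ending with a (strictly negative) downward jump at time $\tau_{k+1}$; the overshoot $C_{\tau_{k+1}-1}-C_{\tau_{k+1}}$ governs the degree of the black spine vertex sitting between the two white spine vertices. By Lemma~\ref{lem:RandomWalkLastJumpTimeReverse} this overshoot has law $k\mapsto k\nu(-k)/\nu(1)$, which by \eqref{eqn:DefinitionNuwNub} and \eqref{eqn:DefinitionNuwNub} is exactly the size-biased distribution $\bar\nu_\bullet$ (recall $\nub(k)=\nu(-k)/(1-\nu(1))$ has mean $m_\bullet$, and $m_\bullet\,\bar\nu_\bullet(k)=k\nub(k)$, while $\nu(1)=\nuw(0)$ makes the normalizations agree after the critical identity $m_\circ m_\bullet=1$). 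Moreover, conditionally on that overshoot being $k$, the position of the continuation of the spine among the $k$ white children is uniform — this is precisely the ``$-Z_T$ uniform on $\{1,\dots,k\}$'' clause of Lemma~\ref{lem:RandomWalkLastJumpTimeReverse}, once we translate the overshoot value $k$ into the number of white vertices on the loop and observe that which of them lies on the spine is read off from $-Z_T$. For the white spine vertices: the number of excursions of $\widehat C$ above $0$ before the first strict descent — equivalently the number of loops glued at a white spine vertex that do \emph{not} continue the spine, plus one — is geometric, and reversing (the reversed-excursion identity at the end of Lemma~\ref{lem:RandomWalkLastJumpTimeReverse}) shows the white spine vertex has offspring law $\bar\nu_\circ$. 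Finally, the subexcursions of $C$ strictly above the future infimum between ladder epochs are, by the Markov property and the reversal identity, i.i.d.\ copies of the finite excursion of Lemma~\ref{lem:FiniteTree}: feeding them through $\Tree\circ\Loop$ they produce i.i.d.\ $\GWnn$-trees hanging off the black spine vertices (and none hanging on the left, consistently with the $(\infty,l)$ truncation).

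Assembling these pieces: the spine vertices have offspring $\bar\nu_\circ$ (white) and $\bar\nu_\bullet$ (black) with a uniformly-chosen spine child, and off-spine the trees are independent $\GWnn$, with nothing on the left of the spine. Comparing with the explicit description of $\Tinf$ recalled after \cite[Theorem~3.1]{stephenson_local_2016} and its left-truncation $\GWnn^{(\infty,l)}$, this is exactly $\GWnn^{(\infty,l)}$. To make ``assembling'' rigorous I would argue finite-dimensionally: fix $R$ and compute $P^\nu_0(\B_{2R}(\Tr_C)=\tr)$ for a fixed finite tree-with-spine-stub $\tr$ by summing over the walk trajectories up to the $R$-th ladder epoch, using the strong Markov property at the $\tau_k$'s to factor the probability into the product of spine-increment contributions and off-spine excursion contributions, then recognize the product as $\GWnn^{(\infty,l)}(\B_{2R}(\cdot)=\tr)$. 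The main obstacle is purely bookkeeping: getting the combinatorial dictionary between the walk $C$ and the planar structure of $\Tr_C$ right at the spine — in particular matching the overshoot/last-jump data of Lemma~\ref{lem:RandomWalkLastJumpTimeReverse} to ``degree of black spine vertex'' and ``index of the spine child,'' and tracking the $\pm1$ offsets in the reversal (the Remark after Lemma~\ref{lem:RandomWalkLastJumpTimeReverse}) so that the geometric/size-biased constants line up exactly with \eqref{eqn:DefinitionNuwNub} — rather than any genuinely new probabilistic input.
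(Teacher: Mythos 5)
Your approach is essentially identical to the paper's: decompose $\Tr_C$ along the strict descending ladder epochs $\tau_k$ of $C$ (the between-ladder subtrees $\Tr_k$ are then i.i.d.\ by the strong Markov property), read the black spine offspring law $\bar\nu_\bullet$ and the uniform spine-child rank off the overshoot statements in Lemma~\ref{lem:RandomWalkLastJumpTimeReverse}, and use the time-reversal clause of that lemma together with Remark~\ref{rk:Forest} to graft independent $\GWnn$-forests on the right of each spine vertex. Two small corrections to your bookkeeping: this is a past-infimum (ladder) decomposition of the \emph{centered} walk, not the future-infimum decomposition of Corollary~\ref{cor:DecompositionRWCSFutureInfimum} (which is Tanaka's identity for the $\uparrow$-conditioned walk and is used for Proposition~\ref{prp:ComponentTreeBiInfinite}); and the white spine offspring count is obtained directly as $\nuw(\cdot-1)$ (one leftmost spine child $s_1$ plus a $\nuw$-geometric number of right-grafted children), which the paper then identifies with the left-pruned version of $\bar\nu_\circ$ with a uniformly chosen spine child, rather than having law $\bar\nu_\circ$ itself.
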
 

\begin{proof}The function $C$ satisfies a.s.\ the assumptions of Section \ref{sec:ContourFunction}. We denote by $\{s_k : k\in\Z_+\}$ the a.s.\ unique spine of $\Tr_C$. Recall from (\ref{eqn:ExcursionIntervals}) the definition of the excursions endpoints $\{\tau_k : k\in \Z_+\}$. The (white) vertices of the spine $\{s_{2k} : k\in\Z_+\}$ are also $\{p_C(\tau_k) : k\in\Z_+\}$. For every $k\in\Z_+$, let $\Tr_k$ be the sub-tree of $\Tr_C$ containing all the offspring of $s_{2k}$ that are not offspring of $s_{2k+2}$ (with the convention that $s_{2k}$ and $s_{2k+2}$ belong to $\Tr_k$). The tree $\Tr_k$ is coded by $\{C_{\tau_k + i}-C_{\tau_k} : 0\leq i \leq \tau_{k+1}-\tau_k \}$, see Figure \ref{fig:SpineLooptree}. By the strong Markov property, the trees $\{\Tr_k : k\in\Z_+\}$ are i.i.d.\ and it suffices to determine the law of $\Tr_0$.

The vertex $s_1$ is the unique black vertex of $\Tr_0$ that belongs to the spine of $\Tr_C$. Its number of offspring read $k_{s_{1}}=k_{s_{1}}(\Tr_C)=k_{s_{1}}(\Tr_0)=C_{\tau_{1}-1}-C_{\tau_{1}}$. Moreover, if $k^{(l)}_{s_{1}}$ (resp.\ $k^{(r)}_{s_{1}}$) is the number of offspring of $s_{1}$ on the left (resp.\ right) of the spine, we have $k^{(l)}_{s_{1}}=-C_{\tau_{1}}-1$ and $k^{(r)}_{s_{1}}=C_{\tau_{1}-1}$. Thus, the position of the child $s_{2}=p_C(\tau_{1})$ of $s_{1}$ that belongs to the spine among its $C_{\tau_{1}-1}-C_{\tau_{1}}$ children is $-C_{\tau_{1}}$. By Lemma \ref{lem:RandomWalkLastJumpTimeReverse},
\[P\left(k_{s_1}=j \right)=\frac{j\nu(-j)}{\nu(1)}=\bar{\nu}_{\bullet}(j), \quad j\in\N,\] and conditionally on $k_{s_1}$, the rank of $s_{2}$ is uniform among $\{1,\ldots,k_{s_1}\}$. We now work conditionally on $(C_{\tau_{1}}-1,C_{\tau_{1}})$, and let $\{v_j : 0\leq j \leq k_{s_{1}}\}$ be the neighbours of $s_{1}$ in counterclockwise order, $v_0$ being the root vertex of $\Tr_0$. Then, the descendants of $\{v_j : 0\leq j \leq k^{(r)}_{s_{1}}\}$ are the trees of components of the finite forest coded by $\{C_{i} : 0\leq i < \tau_{1}\}$ (see Figure \ref{fig:SpineLooptree}). By Lemma \ref{lem:RandomWalkLastJumpTimeReverse}, conditionally on $C_{\tau_1-1}$, the reversed excursion $\{C_{\tau_1-i-1} : 0\leq i < \tau_{1}\}$ has distribution $P^{\widehat{\nu}}_{C_{\tau_1-1}}$ and by Remark \ref{rk:Forest}, the trees of components form a forest of $C_{\tau_1-1}+1$ independent trees with distribution $\GWnn$, that are grafted on the right of the vertices $\{v_j : 0\leq j \leq k^{(r)}_{s_{1}}\}$. By construction, children of $s_1$ on the left of the spine have no offspring. It remains to identify the offspring distribution of the white vertex of the spine in $\Tr_0$, i.e.\ the root vertex. It has one (black) child $s_1$ on the spine, which is its leftmost offspring, and a tree with distribution $\GWnn$ grafted on the right of it. As a consequence, 
\[P\left( k_{\emptyset}(\Tr_C)=k \right)=\nuw(k-1), \quad k\in\N.\] By standard properties of the geometric distribution, this is the law of a uniform variable on $\{1,\ldots,X\}$, with $X$ of law $\bar{\nu}_\circ$. We get Kesten's multi-type tree with pruning on the left.\end{proof}

\begin{figure}[!ht]
\centering
\includegraphics[scale=1.]{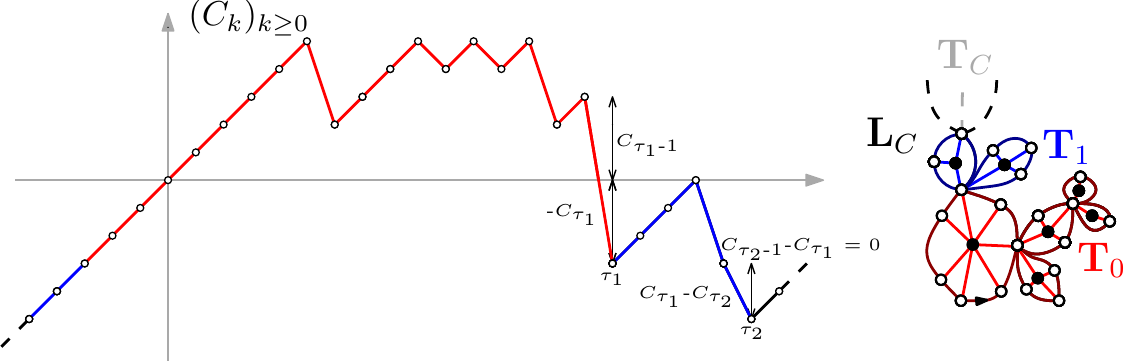}
\caption{The decomposition of the tree $\Tr_C$.}
\label{fig:SpineLooptree}
\end{figure}

Lastly, we consider a random walk $C=\{C_k : k\in\Z_+\}$ with law $P^\nu_0$, together with an independent process $C'=\{C'_k : k\in\Z_+\}$ with law $P^{\nu \uparrow}_0$. 
\begin{Prop}\label{prp:ComponentTreeBiInfinite}The tree of components $\Tr_{C,C'}=\Tree(\Lt_{C,C'})$ of $\Lt_{C,C'}$ has law $\GWnn^{(\infty)}$.\end{Prop}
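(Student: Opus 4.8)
The plan is to build on Proposition \ref{prp:ComponentTree} and Corollary \ref{cor:DecompositionRWCSFutureInfimum}, exploiting the fact that $\Lt_{C,C'}$ is assembled from the ``left-pruned'' half-infinite looptree $\Lt_C$ glued along its left boundary to the forest of looptrees $\mathbf{F}_{C'}$ coded by the conditioned walk $C'$. The key observation is that gluing $\mathbf{F}_{C'}$ on the left of $\Lt_C$ is precisely the operation that \emph{adds back} the descendants on the left of the spine that were discarded in passing from $\Tinf$ to its left-pruned variant. Concretely, $\Tr_{C,C'} = \Tree(\Lt_{C,C'})$ inherits the spine of $\Tr_C$, and along this spine the white vertices acquire, on their left, exactly the trees of components of the looptrees $\Lt_k$ making up $\mathbf{F}_{C'}$. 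So the proof reduces to: (i) identifying the spine structure of $\Tr_{C,C'}$ with that of $\Tinf(\nuw,\nub)$, and (ii) checking that the extra left-side subtrees are i.i.d.\ with law $\GWnn$ and correctly attached.

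First I would recall from Proposition \ref{prp:ComponentTree} that $\Tr_C$ has law $\GWnn^{(\infty,l)}$: it has a unique spine, black spine vertices $s_{2k+1}$ have size-biased offspring $\bar\nu_\bullet$ with the spine-child uniform among them, and white spine vertices $s_{2k}$ have offspring law equal to that of a uniform variable on $\{1,\dots,X\}$ with $X\sim\bar\nu_\circ$ — which, by the geometric identity used there, is $\nuw$ shifted, representing the spine-child plus the off-spine subtree grafted to its right. To recover $\GWnn^{(\infty)}$ I must show that after gluing $\mathbf{F}_{C'}$, each white spine vertex has full offspring distribution $\bar\nu_\circ$ with the spine-child uniformly ranked among \emph{all} its children, and that the subtrees hanging on its left are i.i.d.\ $\GWnn$. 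The core input for this is Corollary \ref{cor:DecompositionRWCSFutureInfimum}: under $P^{\uparrow}_0$, the reversed excursions of $C'$ between the successive times $R_k$ (the last visit to level $\le k-1$) are i.i.d.\ with the law of $\{Z_j : 0\le j < T\}$ under $P^{\widehat\nu}_0$, which by Lemma \ref{lem:FiniteTree} and Remark \ref{rk:Forest} code i.i.d.\ $\GWnn$ trees of components. These excursions are what get grafted, one per white spine vertex, on the left side — matching the construction of the forest $\mathbf{F}_{C'}$ described in Section \ref{sec:ContourFunction}.

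The remaining bookkeeping is to verify that the \emph{rank} of the spine-child among all children of a white spine vertex comes out uniform on $\{1,\dots,N\}$ with $N\sim\bar\nu_\circ$. From $\Tr_C$ the spine-child of $s_{2k}$ is its leftmost child and there are (geometrically many, law $\nuw$) off-spine children to its right. After gluing, there are additionally some looptree-components grafted on its left: in the forest $\mathbf{F}_{C'}$ the number of loops attached at a given vertex is governed by how many excursions of $C'$ start from that level, and Tanaka's construction together with the skip-free property shows the relevant count has exactly the right geometric-type law so that left-count plus right-count plus one is size-biased $\bar\nu_\circ$ with the spine-child uniformly placed; this is the standard ``two independent geometrics glued'' identity, and I would invoke the same computation pattern as in the proof of Proposition \ref{prp:ComponentTree}. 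I would also separately check that the black spine vertices and all off-spine subtrees are unaffected by the gluing (the gluing only touches the left boundary of $\Lt_C$, hence only white spine vertices and their left-descendants). Independence across the spine follows from the independence in Corollary \ref{cor:DecompositionRWCSFutureInfimum} together with the independence of $C$ and $C'$ and the strong Markov property applied to $C$ at the times $\tau_k$.

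The main obstacle I anticipate is the combinatorial matching in step (ii): making precise, in terms of the contour functions, that the loops of $\mathbf{F}_{C'}$ get attached to the white spine vertices of $\Tr_C$ in the right multiplicities and that reshuffling them into the cyclic order around each black vertex reproduces the uniform-rank statement for $\GWnn^{(\infty)}$. This requires carefully tracking, via the projection maps $p_C$, $p_{C,C'}$ and the equivalence relation \eqref{eqn:EquivCStar}, which vertices of $\Z_+/\!\sim$ on the $C'$ side are identified with which white spine vertices of $\Lt_C$, and then translating loop-counts into offspring-counts. Once that dictionary is set up cleanly, the distributional identity is a direct consequence of Proposition \ref{prp:ComponentTree}, Corollary \ref{cor:DecompositionRWCSFutureInfimum}, Lemma \ref{lem:FiniteTree} and the explicit description of $\GWnn^{(\infty)}$ recalled in Section \ref{sec:TreesAndLooptrees}.
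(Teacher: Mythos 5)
Your proposal follows the paper's proof essentially line by line: you invoke Proposition~\ref{prp:ComponentTree} for the law of $\Tr_C$, Corollary~\ref{cor:DecompositionRWCSFutureInfimum} together with Lemma~\ref{lem:FiniteTree} and Remark~\ref{rk:Forest} to see that the trees of components of the $\Lt_k$ in $\mathbf{F}_{C'}$ are i.i.d.\ with law $\GWnn$, observe that the gluing only affects the left side of the white spine vertices, and finish with the ``two independent geometrics'' convolution identity $\sum_{i=1}^k \nuw(i-1)\nuw(k-i)=\bar{\nu}_\circ(k)$ to get the root offspring law and the uniform rank of the spine child. This is precisely the paper's argument, including the final bookkeeping step that you flag as the main obstacle, which the paper likewise dispatches by the identity $k_\emptyset(\Tr_{C,C'})=k_\emptyset(\Tr_0)+k_\emptyset(\Tr_C)$ and the strong Markov property along the spine.
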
 

\begin{proof} The processes $C$ and $C'$ satisfy a.s. the assumptions of Section \ref{sec:ContourFunction}. There, we defined $\Lt_{C,C'}$ as the gluing along their boundaries of $\Lt_{C}$ and the infinite forest $\mathbf{F}_{C'}$. By Proposition \ref{prp:ComponentTree}, $\Tr_C=\Tree(\Lt_C)$ has distribution $\GWnn^{(\infty,l)}$. Recall from Section \ref{sec:ContourFunction} that the looptrees $\{\Lt_k : k\in \Z_+\}$ defining $\mathbf{F}_{C'}$ are coded by the excursions $\{C'_{R_{k}+i+1}-k : 0\leq i \leq R_{k+1}-R_k-1\}$. By Lemma \ref{cor:DecompositionRWCSFutureInfimum}, the time-reverse of these excursions are independent and distributed as $(Z_{k})_{0\leq k \leq T-1}$ under $P^{\widehat{\nu}}_{0}$. By Lemma \ref{lem:FiniteTree}, $\Tr_k:=\Tree(\Lt_k)$ has distribution $\GWnn$. So vertices of $\Tr_{C,C'}$ have independent number of offspring, and offspring distribution $\nuw$ and $\nub$ out of the spine. 

By Proposition \ref{prp:ComponentTree}, black vertices of the spine have offspring distribution $\bar{\nu}_\bullet$, and a unique child in the spine with uniform rank conditionally on the number of offspring. It remains to identify the offspring distribution of white vertices of the spine, and thus of the root vertex. By construction, we have $k_{\emptyset}\left(\Tr_{C,C'}\right)=k_{\emptyset}\left(  \Tr_0 \right)+k_{\emptyset}\left( \Tr_C \right)$, where the variables on the right hand side are independent with respective distribution $\nuw(\cdot -1)$ and $\nuw$. Then,	
	\[P\left(k_{\emptyset}\left(\Tr_{C,C'}\right)=k \right)=\sum_{i=1}^{k}{\nuw(i-1)\nuw(k-i)}=\bar{\nu}_\circ(k), \quad k\in\N.\] The child of the root that belongs to the spine of $\Tr_{C,C'}$ is the leftmost child of the root in $\Tr_C$. Its rank among the $k_{\emptyset}\left(\Tr_{C,C'}\right)$ children of the root in $\Tr_{C,C'}$ is $k_{\emptyset}\left(  \Tr_0 \right)+1$. Furthermore,
	\[P\left( k_{\emptyset}\left(  \Tr_0 \right)+1=i \mid k_{\emptyset}\left(  \Tr_{C,C'} \right)=k \right)=\nuw(i-1)\nuw(k-i)\frac{m_{\circ}}{k\nuw(k)}=\frac{1}{k},\quad k\in\N, \ 1\leq i \leq k,\] so that this rank is uniform among the offspring. We obtain Kesten's multi-type tree.
\end{proof}

\section{Decomposition of the $\normalfont{\UIHPT}$}\label{sec:DecompositionUIHPT}

In this section, we introduce a decomposition of the $\UIHPT$ along a percolation interface and prove Theorem \ref{thm:DecompositionTheorem}. The idea of this decomposition first appears in~\cite[Section 1.7.3]{duplantier_liouville_2014}, where it served as a discrete intuition for a continuous model (see Section \ref{sec:ScalingLimits} for details).

\subsection{Exploration process}\label{sec:ExplorationProcess}

We consider a Bernoulli percolation model with parameter $p$ on the $\UIHPT$, conditionally on the ``Black-White" boundary condition of Figure \ref{fig:InitialColouring}. The decomposition arises from the exploration of the percolation interface between the open and closed clusters of the boundary. Our approach is based on a peeling process introduced in \cite{angel_scaling_2004}, notably to compute the critical threshold. Although we will only use this peeling process at criticality in the remainder of this section, we define it for any $p\in(0,1)$ in view of forthcoming applications.

\begin{Alg}{(\cite{angel_scaling_2004})}\label{alg:AngelPeelingProcess} Let $p\in(0,1)$, and consider a percolated $\UIHPT$ with distribution $\Pb_p$ and a ``Black-White" boundary condition.

\begin{itemize}
\item Reveal the face incident to the edge of the boundary whose endpoints have different colour (the ``Black-White" edge).
\item Repeat the algorithm on the $\UIHPT$ given by the unique infinite connected component of the map deprived of the revealed face.
\end{itemize}

\end{Alg}

This peeling process is well defined in the sense that the pattern of the boundary is preserved, and the spatial Markov property implies that its steps are i.i.d.. We now introduce a sequence of random variables describing the evolution of the peeling process. Let $\Ec_k$, $\Rc_{l,k}$, $\Rc_{r,k}$ and $c_k$ be the number of exposed edges, swallowed edges on the left and right, and colour of the revealed vertex (if any, or a cemetery state otherwise) at step $k$ of the peeling process.
\begin{Def}\label{def:ExplorationProcess}
For every $k\in\N$, let $Y_k=(Y^{(1)}_k,Y^{(2)}_k):=\left(\mathbf{1}_{\{c_k=1\}}-\Rc_{l,k},\mathbf{1}_{\{c_k=0\}}-\Rc_{r,k}\right)$. The exploration process $X=\{X_k=(X^{(1)}_k,X^{(2)}_k) : k\in\Z_+\}$ is defined by
\[X_0=(0,0) \quad \text{and} \quad X_k:=\sum_{i=1}^k{Y_i}, \quad k\in\N.\] \end{Def}

By the properties established in Section \ref{sec:RPM}, the variables $\{Y_k : k\in\N\}$ are independent and distributed as $Y=\left( Y^{(1)},Y^{(2)} \right)$ such that
\begin{equation}\label{eqn:LawY}
Y=\left\lbrace
\begin{array}{cccc}
(1,0) & \mbox{with probability} & 2p/3 &  \\
(0,1) & \mbox{with probability} & 2(1-p)/3 &  \\
(-k,0) & \mbox{with probability} & q_k & (k\in\N)\\
(0,-k) & \mbox{with probability} & q_k & (k\in\N)\\
\end{array}\right..
\end{equation} Let $\mu^0_p$ be the law of $ Y^{(1)}$, so that $Y^{(2)}$ has law $\mu^0_{1-p}$. (This defines a probability measure since $\sum_{k\in\N}{q_k}=1/6$.) Note that $Y^{(1)}=0$ or $Y^{(2)}=0$ but not both a.s., and $\mu^0_{1/2}$ is centered (since $\sum_{k\in\N}{kq_k}=1/3$). We call the random walk $X$ the \textit{exploration process}, as it fully describes Algorithm \ref{alg:AngelPeelingProcess}. We now extract from $X$ information on the percolation clusters. Let $\sigma^B_0=\sigma^W_0=0$ and for every $k\in\N$,
\begin{equation}\label{eqn:BlackWhiteTimes} \sigma^B_k:=\inf\left\lbrace i\geq \sigma^B_{k-1} : X^{(1)}_i \neq X^{(1)}_{\sigma^B_{k-1}} \right\rbrace \quad \text{and} \quad \sigma^W_k:=\inf\left\lbrace i\geq \sigma^W_{k-1} : X^{(2)}_i \neq X^{(2)}_{\sigma^W_{k-1}} \right\rbrace.\end{equation}These stopping times are a.s.\ finite. We also let \begin{equation}\label{eqn:DefZ}
	z_k:=\mathbf{1}_{\left\lbrace Y^{(1)}_k\neq 0 \right\rbrace}=\mathbf{1}_{\left\lbrace Y^{(2)}_k=0 \right\rbrace},\quad k\in\N,
\end{equation} In a word, $\{z_k : k\in\N\}$ is the sequence of colours of the third vertex of the faces revealed by the exploration. The processes $B=\{B_k : k\in\Z_+\}$ and $W=\{W_k : k\in\Z_+\}$ are defined by
\[B_k:=X^{(1)}_{\sigma^B_k}\quad \text{and} \quad W_k:=X^{(2)}_{\sigma^W_k}, \quad k\in\Z_+.\] 

\begin{Lem}\label{lem:LawBW} Let $p\in(0,1)$. Under $\Pb_p$, $B$ and $W$ are independent random walks started at $0$ with step distribution \[\mu_p:=\frac{\mu^0_p(\cdot \cap \Z^*)}{\mu^0_p(\Z^*)}.\] Moreover, $\{z_k : k\in\N\}$ are independent with Bernoulli distribution of parameter $g(p):=2p/3+1/6$, and independent of $B$ and $W$.
\end{Lem}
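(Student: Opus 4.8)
The plan is to read off the laws of $B$, $W$ and $\{z_k\}$ directly from the definition of the exploration process, using the structure of the step distribution \eqref{eqn:LawY} together with the elementary observation that exactly one of the two coordinates of $Y$ moves at each step. First I would record the key decoupling: at every step $k$ either $Y_k^{(1)}\neq 0$ (equivalently $z_k=1$) or $Y_k^{(2)}\neq 0$ (equivalently $z_k=0$), and never both; moreover conditionally on $z_k$ the nonzero coordinate has the law of $\mu^0_p$ (resp.\ $\mu^0_{1-p}$) conditioned to be nonzero, i.e.\ $\mu_p$ (resp.\ $\mu_{1-p}$). From \eqref{eqn:LawY} one computes $\mu^0_p(\mathbb{Z}^*) = 2p/3 + \sum_{k\in\N} q_k = 2p/3 + 1/6 = g(p)$, so that $\{z_k\}$ are i.i.d.\ Bernoulli of parameter $g(p)$, which gives the last assertion about the $z_k$'s.

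Next I would identify $B$. By \eqref{eqn:BlackWhiteTimes}, $\sigma^B_k$ is the time of the $k$-th step at which the first coordinate moves, i.e.\ the $k$-th index $i$ with $z_i=1$. Hence $B_k = X^{(1)}_{\sigma^B_k} = \sum_{i=1}^{k} \big(X^{(1)}_{\sigma^B_i} - X^{(1)}_{\sigma^B_{i-1}}\big)$ is a sum of $k$ i.i.d.\ increments, each distributed as $Y^{(1)}$ conditioned on $Y^{(1)}\neq 0$, which by the computation above is exactly $\mu_p$. Here one uses the i.i.d.\ structure of $\{Y_i\}$ and the strong Markov property at the stopping times $\sigma^B_k$ to see that successive increments of $B$ are independent and identically distributed. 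The same argument with the second coordinate gives that $W$ is a random walk with step law $\mu_{1-p}$; note $\mu^0_{1-p}(\Z^*) = g(1-p) = 2(1-p)/3 + 1/6 = 1 - g(p)$, consistent with the fact that $z_i=0$ with probability $1-g(p)$.

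The only genuine point requiring care is the \emph{joint} independence of $B$, $W$ and $\{z_k\}$. The increments of $B$, the increments of $W$, and the sequence $\{z_k\}$ are all deterministic functions of $\{Y_i\}_{i\in\N}$, but they are read off along disjoint (random) sets of indices, so independence is not completely formal. The clean way is to condition on the whole colour sequence $\{z_i\}_{i\in\N}$: given $\{z_i\}$, the partition of $\N$ into the sites where the first coordinate moves and the sites where the second coordinate moves is fixed, and by independence of the $Y_i$'s the collection $\{Y^{(1)}_i : z_i=1\}$ is i.i.d.\ $\mu_p$ and independent of the collection $\{Y^{(2)}_i : z_i=0\}$, which is i.i.d.\ $\mu_{1-p}$. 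Thus conditionally on $\{z_i\}$ the processes $B$ and $W$ are independent random walks with the stated step distributions and — crucially — their conditional law does not depend on $\{z_i\}$. Therefore $B$ and $W$ are unconditionally independent of $\{z_i\}$ and of each other, with the announced laws. I expect this conditioning-on-the-colour-sequence step to be the main (mild) obstacle; everything else is a direct unwinding of the definitions and the arithmetic $2p/3+1/6 = g(p)$.
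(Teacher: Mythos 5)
Your proposal is correct and follows essentially the same route as the paper: identify that exactly one coordinate of $Y_k$ moves per step, read off $\mu_p$ and $g(p)$ from \eqref{eqn:LawY}, and use the i.i.d.\ structure of $\{Y_k\}$ together with the strong Markov property to get independent random walks $B$ and $W$ with the stated step laws. Your final step — conditioning on the entire colour sequence $\{z_i\}$ to get the joint independence of $B$, $W$ and $\{z_k\}$ — is a slightly more explicit rendering of what the paper compresses into the one-line remark that $\sigma^B_k$ is independent of $Y_{\sigma^B_k}$, and it cleanly handles the issue that the increments of $B$ and $W$ are read off along random disjoint index sets.
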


\begin{proof}As we noticed, for every $k\in\Z_+$, $Y^{(1)}_k=0$ or $Y^{(2)}_k=0$ a.s.. Thus, the sequences $\{\sigma^B_k : k\in\N\}$ and $\{\sigma^W_k : k\in\N\}$ induce a partition of $\N$. Moreover, we have \[B_k=\sum_{i=0}^k{Y^{(1)}_{\sigma^B_i}} \quad \text{and} \quad W_k=\sum_{i=0}^k{Y^{(2)}_{\sigma^W_i}}, \quad k\in\Z_+.\] The variables $\{Y_k : k\in\N\}$ being independent, $B$ and $W$ are independent. We also have	  
	  \[\sigma^B_k:=\inf\left\lbrace i\geq \sigma^B_{k-1} : Y^{(1)}_k \neq 0 \right\rbrace \quad \text{ and } \quad \sigma^W_k:=\inf\left\lbrace i\geq \sigma^W_{k-1} : Y^{(2)}_k \neq 0 \right\rbrace, \quad k\in\N.\] The strong Markov property applied at these times entails the first assertion. The distribution of the variables $\{z_k : k\in\N\}$ follows from the definition of the exploration process and the identity $\mu^0_p(0)=2p/3+1/6$. Finally, for every $k\in\N$, $\sigma^B_k$ (resp.\ $\sigma^W_k$) is independent of $Y_{\sigma^B_k}$ (resp.\ $Y_{\sigma^W_k}$) so that $\{z_k : k\in\N\}$ is independent of $B$ and $W$.\end{proof}
	  
	  Note that for $p=p_c=1/2$, $\mu:=\mu_{p_c}$ is centered and $g(p_c)=1/2$, while $\mu_p$ has positive mean if $p>p_c$. In the remainder of this section, we assume that $p=p_c$ and work under $\Pb$.

\subsection{Percolation hulls and necklace}\label{sec:PercolationClusters}

We now describe the percolation hulls $\Hb$ and $\Hw$ of the origin and the target of the root at criticality. Recall from (\ref{eqn:DefinitionMuwMub}) the definition of the probability measures $\muw$ and $\mub$.

\begin{Prop}\label{prp:BoundaryPercolationHull} The trees of components $\Treeb(\Hb)$ and $\Treeb(\Hw)$ are independent with respective distribution $\GWmm^{(\infty,l)}$ and $\GWmm^{(\infty,r)}$. 

Moreover, conditionally on $\Treeb(\Hb)$ and $\Treeb(\Hw)$, the irreducible components $\{M^b_v : v \in \Treeb(\Hb)_\bullet\}$ and $\{M^w_v : v \in \Treeb(\Hw)_\bullet\}$ are independent critically percolated Boltzmann triangulations with a simple boundary and respective distribution $\mathbf{W}_{\deg(u)}$.\end{Prop}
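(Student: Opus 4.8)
The plan is to read off the structure of the percolation hulls $\Hb$ and $\Hw$ directly from the random walks $B$ and $W$ identified in Lemma \ref{lem:LawBW}, via the coding of looptrees developed in Section \ref{sec:CodingLooptrees}. First I would argue that the boundary of the hull $\Hb$ of the origin cluster, suitably scooped out, is nothing but the looptree $\Lt_B$ associated to the random walk $B$: indeed, each time the exploration of Algorithm \ref{alg:AngelPeelingProcess} swallows $k$ edges on the left while peeling a black vertex, it creates a loop of perimeter governed by the jump of $B$, and the successive loops are glued along a tree exactly as in the construction of $\Tr_C$ from an excursion in Section \ref{sec:ContourFunction}. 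The key point is to match the combinatorics: the ``Black-White'' interface, followed until the exploration process $X^{(1)}$ reaches a new minimum, encloses precisely the irreducible components hanging on the black side, and the whole hull boundary is traced out as $k\to\infty$ because $B$ drifts to $-\infty$ at criticality (since $\mu$ is centered and upwards-skip-free, $\liminf B_k=-\infty$ a.s., which is the hypothesis \eqref{eqn:LimInf}). Since $B$ has step law $\mu=\mu_{p_c}$, which is centered and upwards-skip-free with $\mu(0)=0$, Proposition \ref{prp:ComponentTree} applies with $\nu=\mu$: the tree of components $\Treeb(\Hb)=\Tree(\Lt_B)$ has law $\GWnn^{(\infty,l)}$ where $(\nuw,\nub)$ are defined from $\mu$ by \eqref{eqn:DefinitionNuwNub}. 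A direct computation using $\mu^0_{1/2}(0)=1/2$, $q_{-1}=2/3$ and $q_k=Z_{k+1}9^{-k}$ shows $\mu(1)=\mu^0_{1/2}(1)/\mu^0_{1/2}(\Z^*)=(1/3)/(1/2)=2/3$, hence $\nuw(k)=\tfrac23(\tfrac13)^k=\muw(k)$ and $\nub(k)=\mu(-k)/(1-\mu(1))=q_k/\tfrac12\cdot 3 \cdot \ldots = 6q_k=\mub(k)$, so $\GWnn^{(\infty,l)}=\GWmm^{(\infty,l)}$ as required; the symmetric statement for $\Hw$ with $W$ (step law $\mu_{1-p}|_{p=1/2}=\mu$, pruned on the right) gives $\GWmm^{(\infty,r)}$.

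Independence of $\Treeb(\Hb)$ and $\Treeb(\Hw)$ is immediate from Lemma \ref{lem:LawBW}, which states that $B$ and $W$ are independent, together with the fact that the two hulls are measurable functions of $B$ and of $W$ respectively. The remaining point is to describe the irreducible components. Each black vertex $v$ of $\Treeb(\Hb)$ of degree $k$ corresponds to a loop of perimeter $k$ in $\Lt_B$, which in the $\UIHPT$ is the boundary of a face $\A$ swallowing $k-1$ edges at the peeling step where the jump $-(k-1)$ of $B$ occurs (plus the peeled edge itself, giving perimeter $k$). At that step the spatial Markov property of Section \ref{sec:RPM} guarantees that the finite region cut off is an independent Boltzmann triangulation with a simple boundary of perimeter $k$ under $\mathbb{W}_k$; equipping it with the restriction of the percolation colouring, which is Bernoulli$(1/2)$ by definition of $\Pb_p$ and independent across faces, yields a critically percolated Boltzmann triangulation $M^b_v$ with distribution $\mathbf{W}_k$. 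The boundary condition on $M^b_v$ is the one inherited from the hull, which is what the statement asserts. The independence of the family $\{M^b_v\}$ conditionally on $\Treeb(\Hb)$, and jointly with $\{M^w_v\}$ conditionally on both trees, follows from the independence of the successive peeling steps (the steps are i.i.d.\ by the spatial Markov property) and the fact that distinct loops are filled in at distinct peeling steps.

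The main obstacle I expect is the bookkeeping in the first paragraph: precisely matching the exploration of the ``Black-White'' interface — which reveals faces one at a time and only gradually uncovers a given irreducible component — with the static combinatorial object $\Lt_B$, and checking that what the peeling calls a ``swallowed'' region on the left coincides with a ``loop'' of the looptree on the black side, with the correct rootings (cf.\ Remark \ref{rk:GluingMap}). One must be careful that a single face $\A$ may be incident both to exposed edges contributing to the exterior boundary and to swallowed edges forming part of a loop, and that the loops of the spine of $\Lt_B$ (the faces disconnecting the root from infinity in $\Loop(\Tinf)$) are exactly the faces revealed at the record minima of $B$, i.e.\ at the times $\tau_k$ of \eqref{eqn:ExcursionIntervals}; off-spine loops correspond to the excursions of $B$ between consecutive minima, matching the i.i.d.\ sub-trees $\Tr_k$ in the proof of Proposition \ref{prp:ComponentTree}. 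Once this identification is set up carefully, the distributional statement is a direct transfer of Propositions \ref{prp:ComponentTree} and \ref{prp:ComponentTreeBiInfinite} via the explicit computation $\mu=\mu_{p_c}$, $\muw=\nuw$, $\mub=\nub$. The necklace $\mathbf{N}$ and the gluing $\Psi_\mathbf{N}(\Hb,\Hw)$ — built from the record structure interleaving the $\sigma^B_k$ and $\sigma^W_k$ and the colours $z_k$, which are i.i.d.\ Bernoulli$(1/2)$ by Lemma \ref{lem:LawBW} — are treated in the remainder of the section and are not part of this Proposition.
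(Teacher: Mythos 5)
Your proposal follows essentially the same route as the paper: identify $\Scoop(\Hb)$ with the looptree $\Lt_B$ coded by $B$, invoke Proposition~\ref{prp:ComponentTree} with $\nu=\mu=\mu_{p_c}$ (checking via the computation $\mu(1)=2/3$ that $(\nuw,\nub)=(\muw,\mub)$), use the spatial Markov property for the Boltzmann fillings, and get independence of the two hulls from the independence of $B$ and $W$ in Lemma~\ref{lem:LawBW}. The ``bookkeeping'' obstacle you flag is precisely what the paper's proof resolves by an induction showing the maps $\mathbf{S}_k$ revealed at times $\sigma^B_k$ form a consistent exhaustion isomorphic to $\Lt_k$, with the full hull recovered because $\liminf B_k=-\infty$ forces the left and right boundaries to meet infinitely often.
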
 

\begin{Rk} Galton-Watson trees conditioned to survive being a.s.\ locally finite and one-ended, $\Scoop(\Hb)$ and $\Scoop(\Hw)$ are infinite looptrees. In particular, $\Hb$ or $\Hw$ may have edges whose both sides are incident to their root face, corresponding to a loop of perimeter $2$ in $\Scoop(\Hb)$ or $\Scoop(\Hw)$ that is filled in with the Boltzmann triangulation with a simple boundary consisting of a single edge.
\end{Rk}

\begin{proof}We first deal with the open hull $\Hb$. The equivalence relation $\sim$ is defined by applying (\ref{eqn:EquivC}) with $B$. For every $k\in\Z_+$, let $\Lt_k$ be the quotient space of $(-\infty,k]\cap \Z$ by the restriction of $\sim$ to this set (with the embedding convention of Section \ref{sec:ContourFunction}). We also let $\mathbf{S}_k$ be the part of $\Scoop(\Hb)$ discovered at step $\sigma^B_k$ of the peeling process.

We now prove by induction that $\mathbf{S}_k$ is a map isomorphic to $\Lt_k$ for every $k\in\Z_+$. This is clear for $k=0$, since the initial open cluster is isomorphic to $\mathbb{N}$. We assume that this holds for $k\in\N$, and denote by $p_k$ the canonical projection on $\Lt_k$. The exploration steps between $\sigma^B_k$ and $\sigma^B_{k+1}$ reveal the face incident to $p_k(k)$ and the leftmost white vertex. They leave invariant the open cluster, so we restrict our attention to the step $\sigma^B_{k+1}$ at which two cases are likely.

\begin{enumerate}
	\item An inner open vertex is discovered $(Y^{(1)}_{\sigma^B_{k+1}}=1)$. Then, $\mathbf{S}_k$ is isomorphic to $\Lt_k$ plus an extra vertex in its external face connected only to $p_k(k)$. On the other hand, $B_{k+1}=B_k+1$ so that $\Lt_{k+1}$ is isomorphic to $\mathbf{S}_k$.
	\item The third vertex of the revealed triangle is on the (left) boundary and $l\in\N$ edges are swallowed $(Y^{(1)}_{\sigma^B_{k+1}}=-l)$. Then, $\mathbf{S}_k$ is isomorphic to $\Lt_k$ plus an edge between $p_k(k)$ and $l$-th vertex after $p_k(k)$ in left contour order on the boundary of $\Lt_k$. Since $B_{k+1}=B_k-l$, $\mathbf{S}_k$ is isomorphic to $\Lt_{k+1}$.
\end{enumerate} In the second case, by the spatial Markov property, the loop of perimeter $l+1$ added to $\Lt_k$ is filled in with an independent percolated triangulation with a simple boundary having distribution $\mathbf{W}_{l+1}$. This is the irreducible component $M^\bullet_v$ associated to this loop in $\Scoop(\Hb)$. The peeling process follows the right boundary of $\Hb$. Since $\liminf_{k \rightarrow +\infty}{B_k}=-\infty$ a.s., the left and right boundaries of the hull intersect infinitely many times during the exploration. This ensures that the whole hull $\Hb$ is revealed by the peeling process (i.e.\ that $\B_R(\Hb)$ is eventually revealed for every $R\geq 0$). Moreover, the sequence $\{\Lt_k : k\in\N\}$ is a consistent exhaustion of the looptree $\Lt_B$. Thus, the scooped-out boundary $\Scoop(\Hb)$ is isomorphic to $\Lt_B$. By Proposition \ref{prp:ComponentTree}, the tree of components $\Tr_B=\Treeb(\Hb)$ has distribution $\GWmm^{(\infty,l)}$.

The same argument shows that $\Scoop(\Hw)$ is isomorphic to $\Lt_W$ (up to a reflection and a suitable rooting convention), and the independence of $B$ and $W$ concludes the proof.\end{proof}

\begin{Rk}We proved that $\Hb$ and $\Hw$ have the same law up to a reflection, which is clear since $p_c=1/2$. Note that $\mub$ being supported on $\N$, there is no black leaf in the tree of components a.s.\ (this would correspond to a self-loop in the $\UIHPT$, which is not allowed under our 2-connectedness assumption).
\end{Rk}

Let us explain how the hulls are connected in the $\UIHPT$. We define a planar map with an infinite simple boundary $\mathbf{N}$ as follows. Let $\{c_i : i\in \N\}$ be the corners of the right boundary of $\Hb$ listed in contour order, and similarly for $\{c'_i : i\in \N\}$ with the left boundary of $\Hw$. Then, let $\mathbf{N}$ be the planar map with vertex set $\{c_i, c'_i : i\in \N\}$, such that two vertices are neighbours iff the associated corners are connected by an edge in the $\UIHPT$. (Loosely speaking, we consider the sub-map of the $\UIHPT$ generated by the right boundary of $\Hb$ and the left boundary of $\Hw$, but we split the pinch-points of these boundaries.)

\begin{Prop}\label{prp:UniformNecklace} The planar map $\mathbf{N}$ has distribution $\mathsf{UN}(\infty,\infty)$. Otherwise said, $\Hb$ and $\Hw$ are glued along an independent uniform infinite necklace.\end{Prop}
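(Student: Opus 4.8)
The plan is to identify the necklace $\mathbf{N}$ as being coded by the sequence of colours $\{z_k : k \in \N\}$ of the third vertices revealed by the peeling process, which Lemma \ref{lem:LawBW} tells us are i.i.d.\ Bernoulli$(1/2)$ at criticality and independent of the pair $(B,W)$ (hence of $(\Hb,\Hw)$). The key observation is that at each step of the peeling process where a new triangle is revealed whose apex is an inner vertex (i.e.\ $z_k=1$ or $z_k=0$ according to whether the apex is open or closed), that triangle is precisely one of the triangles of the interface separating the right boundary of $\Hb$ from the left boundary of $\Hw$; its tip points towards $\Hw$ if the apex is closed (it gets absorbed into the white side) and towards $\Hb$ if the apex is open. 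The steps where edges are swallowed on one side contribute only to the looptree structure of that hull and do not add interface triangles, so they can be ignored for the purpose of describing $\mathbf{N}$.

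\textbf{Steps.} First I would revisit the exploration of Algorithm \ref{alg:AngelPeelingProcess} and argue that each peeling step at which the revealed triangle has its apex an inner vertex produces exactly one triangle of $\mathbf{N}$, glued along one edge of the current right boundary of $\Hb$ and one edge of the current left boundary of $\Hw$; I would track which boundary edge is consumed and which new boundary edge is created on each side, checking that this matches the non-crossing gluing-of-triangles description of the uniform infinite necklace (a single new edge is added in a non-crossing way, connecting a vertex of one boundary to a vertex of the other, with the triangle's tip oriented left or right according to $z_k$). Next, I would check that the two families of corners $\{c_i\}$ and $\{c_i'\}$ visited along the right boundary of $\Hb$ and the left boundary of $\Hw$ are exhausted by this process — this uses, exactly as in the proof of Proposition \ref{prp:BoundaryPercolationHull}, that $\liminf B_k=\liminf W_k=-\infty$ a.s., so the interface is fully revealed — and that the splitting of pinch-points in the definition of $\mathbf{N}$ is exactly what makes the two sequences $\{c_i\}$, $\{c_i'\}$ index the two sides of a simple bi-infinite boundary $\Z$ of a necklace. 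Then I would identify the simple random walk $S_k = \sum_{i=1}^k z_i$ of the definition of $\mathsf{UN}(\infty,\infty)$ with the running count of apex colours: the edge added at the $k$-th interface triangle connects vertex $-S_k$ (on the $\Hb$ side, once we place the right boundary of $\Hb$ on the negative integers) to vertex $k+1-S_k$, precisely as in the construction of the uniform infinite necklace. Finally, since $\{z_k\}$ is i.i.d.\ Bernoulli$(1/2)$ and independent of $(B,W)$ by Lemma \ref{lem:LawBW}, and the maps $(\Hb,\Hw)$ are measurable with respect to $(B,W)$ together with the independent Boltzmann fillings, the necklace $\mathbf{N}$ has law $\mathsf{UN}(\infty,\infty)$ and is independent of $(\Hb,\Hw)$.

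\textbf{The main obstacle} I expect is purely combinatorial bookkeeping: making precise, at a given peeling step, which corner on the right boundary of $\Hb$ and which corner on the left boundary of $\Hw$ are the endpoints of the newly revealed interface triangle, and verifying that the local update is exactly ``add one non-crossing edge with a left- or right-pointing tip'' so that the inductive construction of $\mathbf{N}$ coincides edge-by-edge with that of the uniform infinite necklace. One must also be careful that the steps with $z_k = $ (apex on boundary, edges swallowed) genuinely do not perturb the necklace — i.e.\ that swallowing edges on the $\Hb$ side merely reorganizes the right boundary of $\Hb$ without creating or destroying any edge that belongs to $\mathbf{N}$, and symmetrically on the $\Hw$ side — and that pinch-points created along the way are correctly split. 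Once this dictionary between peeling steps and necklace edges is set up cleanly, the probabilistic content is immediate from Lemma \ref{lem:LawBW}, and the independence from $(\Hb,\Hw)$ follows from the independence of $\{z_k\}$ and $(B,W)$ already established there.
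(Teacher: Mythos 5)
Your overall strategy --- read the necklace off the colour sequence $\{z_k\}$ of revealed apices and invoke Lemma \ref{lem:LawBW} for both its law and its independence from $(B,W)$ --- is exactly the paper's approach, and the final probabilistic step is correct. However, there is a genuine combinatorial gap in your reduction: you assert that the peeling steps in which boundary edges are swallowed \emph{do not add interface triangles} and may be ignored when describing $\mathbf{N}$. This is false, and it is precisely the point on which the paper's (very short) proof hinges.

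Consider a step $k$ with $Y^{(1)}_k=-l$ for some $l\geq 1$, i.e.\ $l$ edges swallowed on the black side. The revealed triangle $v_1v_2v_3$ has base vertices $v_1$ (open) and $v_2$ (closed); the apex $v_3$ lies on the current left boundary, $l$ edges to the left, and is therefore open. The chord $v_1v_3$ simultaneously closes a loop in $\Scoop(\Hb)$ (so it does contribute to the looptree structure, as you say) \emph{and} is the third side of a triangle whose three vertices all lie on $\partial\Hb\cup\partial\Hw$: both $v_1$ and $v_3$ are (distinct, after the pinch-point is split) corners of the right boundary of $\Hb$, and $v_2$ is the last corner of the left boundary of $\Hw$. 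Since the revealed face is adjacent to the unexplored exterior, it lies in neither hull, so $v_1v_2v_3$ is a face of $\mathbf{N}$. Such a step advances $\partial\Hb$ by one corner while keeping $\partial\Hw$ fixed --- exactly one unit increment of the walk $S_k=\sum_i z_i$. The paper's proof uses this for \emph{every} $k$: whenever $Y^{(1)}_k\neq 0$ (whether $Y^{(1)}_k=1$ or $Y^{(1)}_k=-l$), the apex is open and defines the next corner of $\partial\Hb$, with the edge to the last closed corner of $\partial\Hw$ being a necklace edge; symmetrically when $Y^{(2)}_k\neq 0$. Note that \eqref{eqn:DefZ} makes $z_k$ a function of every step, not only the inner-apex ones. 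If you drop the swallowing steps, you recover only a strict sub-map of $\mathbf{N}$, and the identification with the construction of $\mathsf{UN}(\infty,\infty)$ in Section \ref{sec:StatementResults} no longer holds. With this correction, the rest of your outline (exhaustion via $\liminf B_k=\liminf W_k=-\infty$ and the independence of $\{z_k\}$ from $(B,W)$) goes through and coincides with the paper's proof.
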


\begin{Rk} Due to the ``Black-White" boundary condition, Proposition \ref{prp:UniformNecklace} ensures that the triangulation generated by $\Hb$ and $\Hw$ in the $\UIHPT$ (i.e.\ the map revealed by the peeling process) is $\Psi_{\mathbf{N}}(\Hb,\Hw)$. Note that the edges of the uniform infinite necklace are exactly the edges that are peeled.\end{Rk}

\begin{proof}For every $k\in\Z_+$ such that $Y^{(1)}_k\neq 0$, the third vertex of the revealed face at step $k$ of the peeling process is open, and defines the $k$-th corner of $\partial\Hb$ in right contour order. In such a situation, there is an edge between this corner and the last (closed) corner of $\partial\Hw$ that has been discovered. The converse occurs when $Y^{(2)}_k\neq 0$. By Lemma \ref{lem:LawBW}, the variables
\[z_k:=\mathbf{1}_{\left\lbrace Y^{(2)}_k=0 \right\rbrace}=\mathbf{1}_{\left\lbrace Y^{(1)}_k\neq 0 \right\rbrace}, \quad  k\in\N,\] are independent with Bernoulli distribution of parameter $1/2$, and independent of $B$ and $W$. We obtain the uniform infinite necklace.\end{proof}

\subsection{Proof of the decomposition result}

\begin{proof}[Proof of Theorem \ref{thm:DecompositionTheorem}]
	
The proof is based on Propositions \ref{prp:BoundaryPercolationHull} and \ref{prp:UniformNecklace}. However, it remains to show that the percolation hulls $\Hb$ and $\Hw$, and the infinite necklace $\mathbf{N}$ cover the entire map, or in other words that the peeling process discovers the whole $\UIHPT$. 

For every $n\in \N$, let $M_n$ be the map revealed at step $n$ of the peeling process, and denote by $M_{\infty}$ the underlying $\UIHPT$ (with origin vertex $\rho$). We denote by $\{\tau^{\bullet}_k : k\in\Z_+\}$ and $\{\tau^{\circ}_k : k\in\Z_+\}$ the endpoints of the excursions intervals of $B$ and $W$ above their infimums, defined as in (\ref{eqn:ExcursionIntervals}), and set 	
	\[\phi(n):=\sigma^B_{\tau^{\bullet}_n} \vee \sigma^W_{\tau^{\circ}_n}, \quad n\in \N.\] We have $\phi(n)<\infty$ a.s.\ for every $n\in\N$. We consider the sequence of sub-maps of $M_\infty$ given by $\{M_{\phi(n)} : n\in \N\}$. For every $n\in \N$, we let $d_n$ stand for the graph distance on $M_{\phi(n)}$ and denote by $\partial M_{\phi(n)}$ the boundary of $M_{\phi(n)}$ as a sub-map of $M_\infty$. More precisely,
	\begin{equation}\label{eqn:DefBoundary}
		\partial M_{\phi(n)}:=\left\lbrace v\in M_{\phi(n)} : \exists \ u\in M_\infty\backslash M_{\phi(n)} \text{ s.t. } u\sim v \right\rbrace.
	\end{equation} Let $\{v^{\bullet}_k : k\in\Z_+\}$ and $\{v^{\circ}_k : k\in\Z_+\}$ be the white vertices of the spine in $\Tr_B$ and $\Tr_W$, seen as vertices of $\Lt_B=\Scoop(\Hb)$ and $\Lt_W=\Scoop(\Hw)$ (and thus of $\Hb$ and $\Hw$). Namely, 
\[v^{\bullet}_k:=p_B\left( B_{\tau^{\bullet}_k} \right) \quad \text{and} \quad v^{\circ}_k:=p_W\left( W_{\tau^{\circ}_k} \right), \quad k\in\Z_+.\] Note that the vertices $v^{\bullet}_0,\ldots,v^{\bullet}_n$ and $v^{\circ}_0,\ldots,v^{\circ}_n$ can be identified in $M_{\phi(n)}$, since $\{\tau^{\bullet}_k : k\in\Z_+\}$ and $\{\tau^{\circ}_k : k\in\Z_+\}$ are stopping times in the filtration of the exploration process. Moreover, these vertices are cut-points: they disconnect the origin from infinity in $\Lt_B$ (resp.\ $\Lt_W$). We now define an equivalence relation $\approx$ on the set $V(M_{\phi(n)})$ of vertices of $M_{\phi(n)}$ as follows:
\begin{center}For every $k\in\{0,\ldots,n\}$ and every $v\in V(\Hb\cap M_{\phi(n)})$, $v \approx v^\bullet_k$ iff there exists a geodesic path from $v$ to the origin of $\Hb$ that contains $v^\bullet_k$, but does not contain $v^\bullet_{k+1}$ if $k<n$.
\end{center} We define symmetric identifications on $V(\Hw\cap M_{\phi(n)})$. Roughly speaking, for every $k\in\{0,\ldots,n-1\}$, the vertices of $\Hb$ between $v^\bullet_k$ and $v^\bullet_{k+1}$ (excluded) are identified to $v^\bullet_k$, and all the vertices of $\Hb$ above $v^\bullet_n$ are identified to $v^\bullet_n$ (and similarly in $\Hw$). We denote the quotient map $M_{\phi(n)}/\approx$ by $M'_{\phi(n)}$ (the root edge of $M'_{\phi(n)}$ is the root edge of $M_\infty$). The graph distance on $M'_{\phi(n)}$ is denoted by $d'_n$. The family $\{M'_{\phi(n)} : n\in \N \}$ is a consistent sequence of locally finite maps with origin $v^\bullet_0=\rho$. Moreover, for every $n\in\N$, the boundary of $M'_{\phi(n)}$ in $M'_{\phi(n+1)}$ is $\{v^{\bullet}_n,v^{\circ}_n\}$. Thus, the sequences $\{d'_n\left( \rho,v^{\circ}_n \right) :n\in\N\}$ and $\{d'_n\left( \rho,v^{\bullet}_n \right) : n\in\N\}$ are non-decreasing and diverge a.s.. By definition of $\approx$ and since we discover the finite regions swallowed by the peeling process, the representatives of $\partial M_{\phi(n)}$ in $M'_{\phi(n)}$ are $v^{\bullet}_n$ and $v^{\circ}_n$. As a consequence,
\begin{equation}\label{eqn:DistanceBoundaryConvergence}
	d_{n}\left( \rho,\partial M_{\phi(n)} \right)\underset{n \rightarrow \infty}{\longrightarrow}\infty \quad \text{a.s.}.
\end{equation} This implies that a.s.\ for every $R\in \Z_+$, the ball of radius $R$ of the $\UIHPT$ $\mathbf{B}_R(M_{\infty})$ is contained in $M_{\phi(n)}$ for $n$ large enough, and concludes the argument.\end{proof}

\section{The incipient infinite cluster of the $\normalfont{\UIHPT}$}\label{sec:IIC}

The goal of this section is to introduce the $\IIC$ probability measure, and prove the convergence and decomposition result of Theorem \ref{thm:IICTheorem}.

\subsection{Exploration process}\label{sec:ExplorationProcessIIC}

We consider a Bernoulli percolation model with parameter $p$ on the $\UIHPT$, conditionally on the ``White-Black-White" boundary condition of Figure \ref{fig:InitialColouringIIC}. From now on, we assume that $p\in [p_c,1)$. As in Section \ref{sec:DecompositionUIHPT} we use a peeling process of the $\UIHPT$, which combines two versions of Algorithm \ref{alg:AngelPeelingProcess}.

\begin{Alg}\label{alg:AlgorithmIIC}
Let $p\in [p_c,1)$ and consider a percolated $\UIHPT$ with distribution $\Pb_p$ and ``White-Black-White" boundary condition. Let $n,m\in\N$ such that $n\geq m$. The first part of the algorithm is called the \textit{left peeling}.

\begin{enumerate}
	\item \textbf{Left peeling.} While the finite open segment on the boundary has size less than $n+1$:
	\begin{itemize}
\item Reveal the face incident to the (leftmost) ``White-Black" edge on the boundary. If there is no such edge, reveal the face incident to the leftmost exposed edge at the previous step.
\item Repeat the operation on the unique infinite connected component of the map deprived of the revealed face.
\end{itemize} When the finite open segment on the boundary has size larger than $n+1$, we start a second part which we call the \textit{right peeling}.

	\item \textbf{Right peeling.} While the finite open segment on the boundary of the map has size greater than $n+1-m$:
	\begin{itemize}
\item Reveal the face incident to the (rightmost) ``Black-White" edge on the boundary. If there is no such edge, reveal the face incident to the leftmost exposed edge at the previous step.
\item Repeat the operation on the unique infinite connected component of the map deprived of the revealed face.
\end{itemize}

The algorithm ends when the left and right peelings are completed.

\end{enumerate}

\end{Alg}

\begin{Rk}By definition, the left and right peelings stop when the length of the open segment reaches a given value. However, it is convenient to define both peeling processes \emph{continued forever}. We systematically consider such processes, and use the terminology \textit{stopped peeling process} otherwise. Nevertheless, the right peeling is defined on the event that the left peeling ends, i.e.\ that the open segment on the boundary reaches size $n+1$.
\end{Rk}

Intuitively, the left and right peelings explore the percolation interface between the open cluster of the origin and the closed clusters of its left and right neighbours on the boundary. Note that the peeling processes are still defined when the open segment on the boundary is swallowed, although they do not follow any percolation interface in such a situation. 
 
As for Algorithm \ref{alg:AngelPeelingProcess}, by the spatial Markov property, Algorithm \ref{alg:AlgorithmIIC} is well defined and has i.i.d.\ steps. We use the notation of Section \ref{sec:ExplorationProcess} for the number of exposed edges, swallowed edges on the left and right, and colour of the revealed vertex (if any) at step $k$ of the left and right peeling processes. We use the exponents $l$ and $r$ to distinguish the quantities concerning the left and right peelings.

The peeling processes are fully described by the associated exploration processes $X^{(l)}=\{X^{(l)}_k=(X^{(l,1)}_k,X^{(l,2)}_k) : k\in\Z_+\}$ and $X^{(r)}=\{X^{(r)}_k=(X^{(r,1)}_k,X^{(r,2)}_k) : k\in\Z_+\}$, defined by 
 \[X^{(l)}_0=X^{(r)}_0=(0,0) \quad \text{and} \quad X^{(l)}_k:=\sum_{i=1}^k{Y^{(l)}_i}, \ X^{(r)}_k:=\sum_{i=1}^k{Y^{(r)}_i},\quad k\in\N,\] where for every $k\in\N$, \[Y^{(l)}_k:=\left(\mathbf{1}_{\left\lbrace c^{(l)}_k=1\right\rbrace}-\Rc^{(l)}_{l,k},\mathbf{1}_{\left\lbrace c^{(l)}_k=0\right\rbrace}-\Rc^{(l)}_{r,k}\right) \text{ and }  Y^{(r)}_k:=\left(\mathbf{1}_{\left\lbrace c^{(r)}_k=1\right\rbrace}-\Rc^{(r)}_{l,k},\mathbf{1}_{\left\lbrace c^{(r)}_k=0\right\rbrace}-\Rc^{(r)}_{r,k}\right).\] The exploration processes $X^{(l)}$ and $X^{(r)}$ have respective lifetimes
\begin{equation}\label{eqn:Sigma}
   \sigma^{(l)}_n:=\inf \left\lbrace k\in\Z_+ : X^{(l,1)}_k\geq n \right\rbrace \quad \text{and} \quad \sigma^{(r)}_m:=\inf \left\lbrace k\in\Z_+ : X^{(r,1)}_k \leq -m \right\rbrace.
\end{equation} The right peeling process is defined on the event $\{\sigma^{(l)}_n<\infty\}$. However, the assumption $p\geq p_c$ guarantees that a.s.\ for every $n\in\N$, $\sigma^{(l)}_n<\infty$ and the left peeling ends. On the contrary, when $p>p_c$, with positive probability $\sigma^{(r)}_m=\infty$ and the right peeling does not end.

We now extract information on the percolation clusters, and introduce the processes $B^{(l)}=\{B^{(l)}_k : k\in\Z_+\}$, $W^{(l)}=\{W^{(l)}_k : k\in\Z_+\}$, $B^{(r)}=\{B^{(r)}_k : k\in\Z_+\}$ and $W^{(r)}=\{W^{(r)}_k : k\in\Z_+\}$ defined by
\[\left(B^{(l)}_k,W^{(l)}_k\right):=\left(X^{(l,1)}_{\sigma^{(B,l)}_k},X^{(l,2)}_{\sigma^{(W,l)}_k}\right) \quad \text{and} \quad \left(B^{(r)}_k,W^{(r)}_k\right):=\left(X^{(r,1)}_{\sigma^{(B,r)}_k},X^{(r,2)}_{\sigma^{(W,r)}_k}\right), \quad k\in\Z_+,\] where we use the same definitions as in (\ref{eqn:BlackWhiteTimes}) for the stopping times. We define as in (\ref{eqn:DefZ}) the random variables $\{z^{(l)}_k : k\in\N\}$ and $\{z^{(r)}_k : k\in\N\}$. The exploration process $X^{(l)}$ is measurable with respect to $B^{(l)}$, $W^{(l)}$ and $z^{(l)}$ (and the same holds when replacing $l$ by $r$). The lifetimes of the processes $B^{(l)}$ and $B^{(r)}$ are defined by
\begin{equation}\label{eqn:TTimeB}
  T^{(B,l)}_n:=\inf \left\lbrace k\in\Z_+ : B^{(l)}_k \geq n \right\rbrace \quad \text{and} \quad T^{(B,r)}_m:=\inf \left\lbrace k\in\Z_+ : B^{(r)}_k \leq -m \right\rbrace,
\end{equation} while the lifetimes of $W^{(l)}$ and $W^{(r)}$ read
\[T^{(W,l)}_n:=\sigma^{(l)}_n-T^{(B,l)}_n \quad \text{and} \quad T^{(W,r)}_m:=\sigma^{(r)}_m-T^{(B,r)}_m.\] When $p>p_c$, $\sigma^{(r)}_m=\infty$ with positive probability, in which case $T^{(B,r)}_m=\infty$ (and by convention, $T^{(W,r)}_m=\infty$). Thus, $\{\sigma^{(r)}_m<\infty\}$ is measurable with respect to $T^{(B,r)}_m$ and $T^{(W,r)}_m$. For every $k\in \N\cup\{\infty\}$ and every $p\in(0,1)$, we denote by $\mathsf{NB}(k,p)$ the negative binomial distribution with parameters $k$ and $p$ (where $\mathsf{NB}(\infty,p)$ is a Dirac mass at infinity).

\begin{Lem}\label{lem:LawBWUnderP}Let $p\in[p_c,1)$ and $n,m\in\N$ such that $n\geq m$. The following hold under $\Pb_p$.

\begin{itemize}
	\item \textbf{Left peeling.} $\{B^{(l)}_k : 0\leq k \leq T^{(B,l)}_n\}$ is a random walk with step distribution $\mu_p$, killed at the first entrance in $[n,+\infty)$. Conditionally on $T^{(B,l)}_n$, $T^{(W,l)}_n$ has distribution $\mathsf{NB}(T^{(B,l)}_n,g(p))$. Then, conditionally on $T^{(W,l)}_n$, $\{W^{(l)}_k : 0\leq k \leq T^{(W,l)}_n\}$ is a random walk with step distribution $\mu_{1-p}$, independent of $B^{(l)}$. Finally, conditionally on $T^{(B,l)}_n$ and $T^{(W,l)}_n$, $\{z^{(l)}_k : 1\leq k \leq \sigma^{(l)}_n-1\}$ is uniformly distributed among the set of binary sequences with $T^{(W,l)}_n$ zeros and $T^{(B,l)}_n-1$ ones.

	\item \textbf{Right peeling.} The above results hold when replacing $l$ by $r$, except that $\{B^{(r)}_k : 0\leq k \leq T^{(B,r)}_m\}$ is killed at the first entrance in $(-\infty,-m]$. Moreover, $\sigma^{(r)}_m$ is possibly infinite: conditionally on $T^{(B,r)}_m$ and $T^{(W,r)}_m$, $\{z^{(r)}_k : 1\leq k \leq \sigma^{(r)}_m-1\}$ is uniformly distributed among the set of binary sequences with $T^{(W,r)}_m$ zeros and $T^{(B,r)}_m-1$ ones on the event $\{\sigma^{(r)}_m<\infty\}$, and distributed as a sequence of i.i.d.\ variables with Bernoulli distribution of parameter $g(p)$ on the event $\{\sigma^{(r)}_m=\infty\}$.
\end{itemize} Lastly, $X^{(r)}$ is independent of $\{X^{(l)}_k : 0 \leq k \leq \sigma^{(l)}_n\}$. (However, it is not independent of the whole process $X^{(l)}$.)

\end{Lem}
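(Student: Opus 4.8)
The plan is to push all the combinatorial content into the single fact that Algorithm~\ref{alg:AlgorithmIIC} has i.i.d.\ steps, and then deduce each distributional assertion by elementary renewal-type computations with independent random walks and Bernoulli sequences. First I would record the i.i.d.\ structure: by the spatial Markov property and the re-rooting invariance of the $\UIHPT$, applied along the (map-measurable) peeling rules, the increments $\{Y^{(l)}_k : k\in\N\}$ of the left peeling continued forever are i.i.d.\ with law~\eqref{eqn:LawY} for the parameter $p$, and $\{Y^{(r)}_k : k\in\N\}$ are i.i.d.\ with law~\eqref{eqn:LawY} on the a.s.\ event $\{\sigma^{(l)}_n<\infty\}$ (this event is a.s.\ because $p\geq p_c$ makes the upwards-skip-free, centered-or-positive-mean walk $B^{(l)}$ reach $[n,+\infty)$). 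For the last assertion, note that $\{X^{(l)}_k : 0\leq k\leq \sigma^{(l)}_n\}$ is measurable with respect to the finite map $M_{\sigma^{(l)}_n}$ revealed by the stopped left peeling (swallowed finite regions included), so the spatial Markov property applied at the stopping time $\sigma^{(l)}_n$ exhibits the unexplored region as a fresh percolated $\UIHPT$, independent of $M_{\sigma^{(l)}_n}$, of which $X^{(r)}$ is a functional; hence $X^{(r)}$ is independent of $\{X^{(l)}_k : 0\leq k\leq \sigma^{(l)}_n\}$, while it is \emph{not} independent of the whole continued-forever $X^{(l)}$, since the continuation of $X^{(l)}$ past $\sigma^{(l)}_n$ is also a functional of that fresh $\UIHPT$.

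Given the i.i.d.\ increments, the proof of Lemma~\ref{lem:LawBW} applies unchanged — it uses only that the increments are i.i.d.\ with law~\eqref{eqn:LawY} — so $B^{(l)}$ and $W^{(l)}$ are independent random walks with step distributions $\mu_p$ and $\mu_{1-p}$, and $\{z^{(l)}_k : k\in\N\}$ is an i.i.d.\ Bernoulli$(g(p))$ sequence independent of $(B^{(l)},W^{(l)})$; moreover $X^{(l)}$ is the deterministic functional of $(B^{(l)},W^{(l)},z^{(l)})$ that at step $k$ consumes the next increment of $B^{(l)}$ if $z^{(l)}_k=1$ and of $W^{(l)}$ otherwise. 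Since the ones of $z^{(l)}$ occur exactly at the times $\sigma^{(B,l)}_1,\sigma^{(B,l)}_2,\ldots$, we have $\sigma^{(l)}_n=\sigma^{(B,l)}_{T^{(B,l)}_n}$ with $T^{(B,l)}_n=\inf\{j : B^{(l)}_j\geq n\}$ a function of $B^{(l)}$ alone. Consequently $\{B^{(l)}_k : 0\leq k\leq T^{(B,l)}_n\}$ is a $\mu_p$-walk killed at its first entrance in $[n,+\infty)$; conditionally on $T^{(B,l)}_n$, the number $T^{(W,l)}_n=\sigma^{(l)}_n-T^{(B,l)}_n$ of zeros preceding the $T^{(B,l)}_n$-th one of the sequence $z^{(l)}\perp B^{(l)}$ is $\mathsf{NB}(T^{(B,l)}_n,g(p))$-distributed; and since $W^{(l)}$ is independent of $(B^{(l)},z^{(l)})$, hence of $(T^{(B,l)}_n,T^{(W,l)}_n)$, conditionally on $T^{(W,l)}_n$ the walk $\{W^{(l)}_k : 0\leq k\leq T^{(W,l)}_n\}$ is still a $\mu_{1-p}$-walk, independent of $B^{(l)}$. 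Finally the terminal step $\sigma^{(l)}_n$ is a ``$z=1$'' step by construction, so conditionally on $(T^{(B,l)}_n,T^{(W,l)}_n)$ the string $\{z^{(l)}_k : 1\leq k\leq \sigma^{(l)}_n-1\}$ contains $T^{(B,l)}_n-1$ ones and $T^{(W,l)}_n$ zeros, and an i.i.d.\ Bernoulli sequence conditioned on its number of ones over a fixed window is uniform, whence the claimed uniformity.

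The right peeling is treated identically, with $[n,+\infty)$ replaced by $(-\infty,-m]$ and $\sigma^{(r)}_m=\sigma^{(B,r)}_{T^{(B,r)}_m}$, $T^{(B,r)}_m=\inf\{j : B^{(r)}_j\leq -m\}$. The sole genuine difference is that $\mu_p$ has nonnegative, and for $p>p_c$ strictly positive, mean, so the upwards-skip-free walk $B^{(r)}$ reaches $(-\infty,-m]$ only with probability in $(0,1)$; on $\{T^{(B,r)}_m=\infty\}$ one sets $\sigma^{(r)}_m=T^{(W,r)}_m=\infty$ by convention. Since $g(p)\in(0,1)$ makes $z^{(r)}$ have infinitely many ones a.s., the event $\{\sigma^{(r)}_m<\infty\}=\{T^{(B,r)}_m<\infty\}$ is $B^{(r)}$-measurable, hence independent of $(W^{(r)},z^{(r)})$; conditioning on it preserves the laws of $W^{(r)}$ and $z^{(r)}$ and the computation above goes through verbatim, whereas on the complement $\{\sigma^{(r)}_m=\infty\}$ the full sequence $\{z^{(r)}_k : k\geq 1\}$ is used and remains i.i.d.\ Bernoulli$(g(p))$ by the same independence.

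I expect the main obstacle to be the first step rather than the renewal bookkeeping: one must be confident that both the left peeling continued forever and the right peeling launched at the adapted time $\sigma^{(l)}_n$ genuinely have i.i.d.\ steps with law~\eqref{eqn:LawY}, despite the boundary-condition pattern, the possibility that the open segment on the boundary is swallowed, and the fallback rule ``peel the leftmost exposed edge of the previous step''. This rests on the fact that the edge peeled at each step is a measurable function of the currently revealed map — so that re-rooting invariance makes the revealed face's configuration distributed as in~\eqref{eqn:LawY} regardless of that choice — together with the strong form of the spatial Markov property at $\sigma^{(l)}_n$; this is precisely the content of the remark preceding the statement, that Algorithm~\ref{alg:AlgorithmIIC} is well defined with i.i.d.\ steps. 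Once this is granted, everything else is the routine manipulation of the three independent ingredients $(B,W,z)$ carried out above.
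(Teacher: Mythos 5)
Your proposal is correct and takes essentially the same route as the paper: apply the spatial Markov property at the stopping time $\sigma^{(l)}_n$ to separate the two peelings, invoke Lemma \ref{lem:LawBW} to get the independent triple $(B^{(l)},W^{(l)},z^{(l)})$, and then read off each assertion by routine renewal bookkeeping (killing $B^{(l)}$ at $T^{(B,l)}_n$, negative-binomial count of zeros before the $T^{(B,l)}_n$-th one, forced $z^{(l)}_{\sigma^{(l)}_n}=1$ at the terminal step, exchangeability of the Bernoulli string). The only place you spend more words than the paper is in justifying that the i.i.d.\ structure survives the fallback rule of Algorithm \ref{alg:AlgorithmIIC} and that $\{\sigma^{(r)}_m<\infty\}$ is $B^{(r)}$-measurable and hence independent of $(W^{(r)},z^{(r)})$; the paper treats both as immediate, but your elaboration is accurate and does not change the argument.
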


\begin{proof}By the spatial Markov property, $X^{(r)}$ is independent the map revealed by the left peeling process, i.e.\ of $\{X^{(l)}_k : 0 \leq k \leq \sigma^{(l)}_n\}$. Moreover, $X^{(l)}$ and $X^{(r)}$ are distributed as the process $X$ of Definition \ref{def:ExplorationProcess}. We restrict our attention to the left peeling. By Lemma \ref{lem:LawBW}, $B^{(l)}$ and $W^{(l)}$ are independent random walks with step distribution $\mu_p$ and $\mu_{1-p}$, while $z^{(l)}$ is an independent sequence of i.i.d.\ variables with Bernoulli distribution of parameter $g(p)$. The first assertion follows from the definition of $T^{(B,l)}_n$. The random time $\sigma^{(l)}_n$ being measurable with respect to $X^{(l,1)}$, it is independent of $W^{(l)}$. Thus, conditionally on $T^{(B,l)}_n$ the lifetime of $W^{(l)}$ is exactly the number of failures before the $T^{(B,l)}_n$-th success in a sequence of independent Bernoulli trials with parameter $g(p)$. This is the second assertion. By definition, $Y^{(1,l)}_{\sigma^{(l)}_n} \neq 0$ so that $z^{(l)}_{\sigma^{(l)}_n}=1$. Conditionally on $T^{(B,l)}_n$ and $T^{(W,l)}_n$, the binary sequence $\{z^{(l)}_k : 1\leq k \leq \sigma^{(l)}_n-1\}$ is uniform among all possibilities by definition of the negative binomial distribution. The properties of the right peeling follow from a slight adaptation of these arguments.\end{proof}

We now describe the events we condition on to force an infinite critical open percolation cluster and define the $\IIC$. As in \cite{kesten_incipient_1986}, we use two distinct conditionings. Firstly, $B^{(l)}$ describes the length of the open segment on the boundary of the unexplored map (before the left peeling stops). This segment represents the revealed part of the left boundary of $\C$ (the open cluster of the origin). We define the height $h(\C)$ of $\C$ by
\begin{equation}\label{eqn:HeightCluster}
  h(\C):= \sup \left \lbrace B^{(l)}_k : 0\leq k\leq T \right \rbrace, \quad \text{where} \quad T:=\inf \left \lbrace k\in\Z_+ : B^{(l)}_k<0 \right \rbrace.
\end{equation} This is also the length of the loop-erasure of the open path revealed by the left peeling. Note that both the perimeter of the hull $\Hc$ of $\C$ and its size $\vert \C \vert$ are larger than $h(\C)$. Consistently, we want to condition the height of the cluster to be larger than $n$. In terms of the exploration process, this exactly means that $B^{(l)}$ reaches $[n,+\infty)$ before $(-\infty,0)$: 
\[\{h(\C) \geq n\}= \left \lbrace T^{(B,l)}_n<T \right \rbrace.\] We thus work under $\Pb(\cdot \mid h(\C) \geq n )$ and let $n$ go to infinity. Secondly, we let $p>p_c=1/2$ and condition $\C$ to be infinite (which is an event of positive probability under $\Pb_p$). In other words, we work under $\Pb_p(\cdot \mid \vert \C \vert = \infty )$ and let $p$ decrease towards $p_c$.

We now describe the law of the exploration process under these conditional probability measures, starting with $\Pb(\cdot \mid h(\C) \geq n )$. In the next part, we voluntarily choose the same parameter $n$ for the definition of the left peeling process and the conditioning $\{h(\C) \geq n\}$. This will make our argument simpler. The event $\left \lbrace h(\C) \geq n \right \rbrace=\{T^{(B,l)}_n<T\}$ is measurable with respect to $\{B^{(l)}_k : 0\leq k \leq T^{(B,l)}_n\}$. By Lemma \ref{lem:LawBWUnderP}, it is then independent of the other variables involved in the stopped left peeling process.

\begin{Lem}\label{lem:LawBWUnderPn}Let $n,m\in\N$ such that $n\geq m$. Under $\Pb(\cdot \mid h(\C) \geq n)$, the statements of Lemma \ref{lem:LawBWUnderP} (under $\Pb$) hold except that $\{B^{(l)}_k : 0\leq k \leq T^{(B,l)}_n\}$ is a random walk conditioned to reach $[n,+\infty)$ before $(-\infty,0)$ (and killed at that entrance time).
\end{Lem}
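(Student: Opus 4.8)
The plan is to exploit the fact that the only piece of the stopped left peeling process that the event $\{h(\C)\geq n\}$ can see is the killed process $\{B^{(l)}_k : 0\leq k\leq T^{(B,l)}_n\}$ itself, so that conditioning on $\{h(\C)\geq n\}$ amounts to conditioning \emph{that one ingredient} and leaving all the others untouched. Concretely, I would first recall, as already noted just before the statement, that $\{h(\C)\geq n\}=\{T^{(B,l)}_n<T\}$ with $T=\inf\{k:B^{(l)}_k<0\}$; since this event occurs exactly when $B^{(l)}_k\geq 0$ for every $k<T^{(B,l)}_n$, it is measurable with respect to $\{B^{(l)}_k:0\leq k\leq T^{(B,l)}_n\}$. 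Its probability under $\Pb$ is positive (for instance $\Pb(h(\C)\geq n)\geq\mu(1)^n=(2/3)^n$, the contribution of the trajectory that goes straight up), so the conditioning is legitimate.

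Next I would invoke the disintegration provided by Lemma~\ref{lem:LawBWUnderP} at $p=p_c$. Under $\Pb$ it exhibits the joint law of $\big(\{B^{(l)}_k\}_{k\leq T^{(B,l)}_n},\,T^{(W,l)}_n,\,\{W^{(l)}_k\}_{k\leq T^{(W,l)}_n},\,z^{(l)},\,X^{(r)}\big)$ — hence of the whole stopped left peeling together with $X^{(r)}$ and the right peeling it drives — in the following ordered form: $\{B^{(l)}_k\}_{k\leq T^{(B,l)}_n}$ comes first; the conditional law of $T^{(W,l)}_n$ depends on it only through $T^{(B,l)}_n$ (it is $\mathsf{NB}(T^{(B,l)}_n,g(p_c))$); the conditional law of $\{W^{(l)}_k\}_{k\leq T^{(W,l)}_n}$ depends only on $T^{(W,l)}_n$; the conditional law of $z^{(l)}$ depends only on $(T^{(B,l)}_n,T^{(W,l)}_n)$; and $X^{(r)}$ is independent of all the above. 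Then I would apply the elementary fact that conditioning a random vector $(U,V)$ on an event $A\in\sigma(U)$ with $\mathbf P(A)>0$ leaves the conditional law of $V$ given $U$ unchanged and merely replaces the marginal of $U$ by its law given $A$. Taking $U=\{B^{(l)}_k\}_{k\leq T^{(B,l)}_n}$ and $V$ to be all the remaining data, conditioning on $\{h(\C)\geq n\}$ changes none of the downstream conditional laws and replaces the law of $\{B^{(l)}_k\}_{k\leq T^{(B,l)}_n}$ — a $\mu$-random walk killed at its first entrance in $[n,+\infty)$ — by the same walk further conditioned on $\{T^{(B,l)}_n<T\}$, that is, conditioned to reach $[n,+\infty)$ before $(-\infty,0)$ and killed at that entrance time. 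This is exactly the assertion of the lemma.

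The argument is essentially bookkeeping, and the only point needing genuine care is to make sure the disintegration really is ordered with $\{B^{(l)}_k\}_{k\leq T^{(B,l)}_n}$ first and that \emph{no} later conditional law depends on finer detail of that trajectory than $T^{(B,l)}_n$: this is precisely what Lemma~\ref{lem:LawBWUnderP} records, using in addition that $\sigma^{(l)}_n=T^{(B,l)}_n+T^{(W,l)}_n$ and that the interleaving of $\{X^{(l)}_k:0\leq k\leq\sigma^{(l)}_n\}$ is measurable with respect to $(\{B^{(l)}_k\}_{k\leq T^{(B,l)}_n},\{W^{(l)}_k\}_{k\leq T^{(W,l)}_n},z^{(l)})$. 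The independence of $X^{(r)}$ from the stopped left peeling, also granted by Lemma~\ref{lem:LawBWUnderP} and a fortiori independence from the smaller $\sigma$-algebra generated by $\{B^{(l)}_k\}_{k\leq T^{(B,l)}_n}$, guarantees that the right peeling (and the possibility $\sigma^{(r)}_m=\infty$, vacuous here at $p=p_c$) is not affected by the conditioning.
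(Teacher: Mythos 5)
Your proof is correct and follows essentially the same line the paper takes, namely that $\{h(\C)\geq n\}=\{T^{(B,l)}_n<T\}$ is measurable with respect to the stopped process $\{B^{(l)}_k:0\le k\le T^{(B,l)}_n\}$, which by Lemma~\ref{lem:LawBWUnderP} is (conditionally on $T^{(B,l)}_n$) the only ingredient the remaining pieces of the disintegration depend on, so conditioning merely replaces the killed walk by the killed walk conditioned on $\{T^{(B,l)}_n<T\}$. The paper compresses this into the two sentences preceding the lemma and states no separate proof; your version simply spells out the bookkeeping (the $U$-versus-$V$ disintegration and the independence of $X^{(r)}$) that the paper leaves implicit.
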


We now describe the exploration process under $\Pb_p(\cdot \mid \vert \C \vert = \infty )$, for $p>p_c$.

\begin{Lem}\label{lem:LawBWUnderPp}Let $p>p_c$ and $n,m\in\N$ such that $n\geq m$. Under $\Pb_p(\cdot \mid \vert \C \vert =\infty)$, the statements of Lemma \ref{lem:LawBWUnderP} (under $\Pb_p$) hold except that $\{B^{(l)}_k : 0\leq k \leq T^{(B,l)}_n\}$ is a random walk conditioned to stay nonnegative killed at the first entrance in $[n,+\infty)$, and $\{B^{(r)}_k : 0\leq k \leq T^{(B,r)}_m\}$ is a random walk conditioned to stay larger than $-n$ killed at the first entrance in $(-\infty,-m]$.
\end{Lem}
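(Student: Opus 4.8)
The plan is to reduce the event $\{\vert \C \vert = \infty\}$ to a condition on the exploration process and then identify the resulting conditioned law of $B^{(l)}$ (and $B^{(r)}$) as the $h$-transform appearing in the Bertoin theorem quoted in Section \ref{sec:RandomWalk}. First I would argue that, when $p>p_c$, the open cluster $\C$ of the origin is infinite if and only if the left peeling \emph{never} swallows the whole open segment on the boundary of the unexplored map, i.e.\ if and only if $B^{(l)}$ never enters $(-\infty,0)$. Indeed, the left peeling follows the percolation interface between $\C$ and the closed cluster on its left, so $\C$ stays connected to infinity precisely as long as $B^{(l)}_k\ge 0$; a short argument (using that $p>p_c$, so $B^{(l)}$ has positive drift and $\Pb_p(T=\infty)>0$) shows that on $\{T=\infty\}$ the cluster $\C$ is a.s.\ infinite, and conversely on $\{T<\infty\}$ it is finite a.s.. Hence $\{\vert\C\vert=\infty\}=\{T=\infty\}$ up to a null set, where $T:=\inf\{k:B^{(l)}_k<0\}$ as in \eqref{eqn:HeightCluster}.

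Next I would invoke Lemma \ref{lem:LawBWUnderP}: under $\Pb_p$ the processes $B^{(l)}$, $W^{(l)}$ and $z^{(l)}$ are mutually independent, with $B^{(l)}$ a random walk of step distribution $\mu_p$ (positive mean for $p>p_c$), and the event $\{T=\infty\}$ is measurable with respect to $B^{(l)}$ alone. Therefore conditioning on $\{\vert\C\vert=\infty\}$ leaves the conditional laws of $W^{(l)}$, $z^{(l)}$, and the whole right exploration process $X^{(r)}$ unchanged (given the relevant killing times), and only modifies $B^{(l)}$ to a $\mu_p$-walk conditioned on $\{T=\infty\}$. Since $\mu_p$ is upwards-skip-free with positive mean, conditioning on $\{T=\infty\}$ is conditioning on an event of positive probability, and the standard fact (recalled via the renewal function $V_p$ in the ``random walk with positive drift'' paragraph of Section \ref{sec:RandomWalk}) is that this conditioned walk is exactly the $h$-transform $P^{\mu_p\uparrow}_0$, i.e.\ the random walk conditioned to stay nonnegative. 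Killing it at the first entrance in $[n,+\infty)$ gives the claimed description of $\{B^{(l)}_k:0\le k\le T^{(B,l)}_n\}$.

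For $B^{(r)}$ the argument is the symmetric one, with one extra point. The right peeling starts only on $\{\sigma^{(l)}_n<\infty\}$, which holds a.s.\ for $p\ge p_c$, and by the spatial Markov property $X^{(r)}$ is (conditionally on the killing times) independent of the stopped left peeling, hence of the event $\{T=\infty\}$ that we are conditioning on. So under $\Pb_p(\cdot\mid\vert\C\vert=\infty)$ the right exploration process retains its $\Pb_p$-law from Lemma \ref{lem:LawBWUnderP}; in particular $B^{(r)}$ is still a $\mu_p$-walk. The asserted statement, however, claims that $B^{(r)}$ is conditioned to stay \emph{larger than $-n$} (not merely unconditioned). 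This reflects the further conditioning implicit in the right peeling being run only while the open segment stays large, combined with $\{\vert\C\vert=\infty\}$ forcing the boundary open segment to have length $n$ at the start of the right phase and never to be fully swallowed from the right; so after the reflection $k\mapsto -k$, $B^{(r)}$ is a walk with positive drift conditioned not to enter $(-\infty,-n]$, again an $h$-transform by the same renewal-function argument, killed at the first entrance in $(-\infty,-m]$. The main obstacle I anticipate is the careful bookkeeping in this last step: translating the geometric conditioning $\{\vert\C\vert=\infty\}$ precisely into the pair of one-sided conditionings on $B^{(l)}$ and $B^{(r)}$, keeping track of which boundary segment has which length at the moment the right peeling begins, and verifying that the independence structure of Lemma \ref{lem:LawBWUnderP} is exactly what is needed so that no other conditional law is disturbed. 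The random-walk inputs themselves are all supplied by Section \ref{sec:RandomWalk} and are routine.
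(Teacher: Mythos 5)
Your argument for the left peeling matches the paper's: $\{\vert\C\vert=\infty\}=\{T=\infty\}$ $\Pb_p$-a.s.\ with $T=\inf\{k:B^{(l)}_k<0\}$, this event is measurable with respect to $B^{(l)}$ alone, and by Lemma \ref{lem:LawBW} it is therefore independent of $W^{(l)}$ and $z^{(l)}$, so conditioning only modifies $B^{(l)}$ (and, via Lemma \ref{lem:RWCSPkilled}, the stopped process has the law of $P^{\mu_p\uparrow}_0$ killed at $T_n$). That part is fine.

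The step for the right peeling contains a genuine error. You write that $X^{(r)}$ is ``independent of the stopped left peeling, hence of the event $\{T=\infty\}$,'' and deduce that $B^{(r)}$ retains its unconditioned $\mu_p$-law. The inference is false: $\{T=\infty\}$ is measurable with respect to the \emph{entire} process $B^{(l)}$, not with respect to the stopped process $\{X^{(l)}_k : 0\le k\le\sigma^{(l)}_n\}$, and Lemma \ref{lem:LawBWUnderP} explicitly warns that $X^{(r)}$ is \emph{not} independent of the whole $X^{(l)}$. The conclusion you draw is also self-refuting: if $B^{(r)}$ kept its unconditioned positive-drift law, it would enter $(-\infty,-n]$ with positive probability, swallowing the open segment and making $\C$ finite, which is impossible under $\Pb_p(\cdot\mid\vert\C\vert=\infty)$. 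You notice the conflict with the stated lemma and then gesture at the right answer, but the reasoning that would justify it is exactly the ``careful bookkeeping'' you defer and never supply.

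The missing idea is that the conditioning event factorizes across the two peelings. Under $\Pb_p$, after the stopped left peeling, the remaining map $M'_\infty$ has law $\Pb_p$ with a ``White-Black-White'' boundary condition whose open segment has size $n+1$, and is independent of the stopped left peeling. Under $\Pb_p(\cdot\mid\vert\C\vert=\infty)$, the event $\{T^{(B,l)}_n<\infty\}\cap\{T^{(B,l)}_n<T\}$ has probability one, the stopped left peeling reveals a.s.\ finitely many vertices, so $\{\vert\C\vert=\infty\}=\{\vert\C'\vert=\infty\}$ a.s., where $\C'$ is the open cluster of the origin in $M'_\infty$. The same reasoning as for the left side then gives $\{\vert\C'\vert=\infty\}=\{T'_{-n}=\infty\}$ with $T'_{-n}:=\inf\{k: B^{(r)}_k<-n\}$, which is measurable with respect to $B^{(r)}$ alone. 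Thus the conditioning transfers to the right exploration precisely as the conditioning $\{T'_{-n}=\infty\}$ on $B^{(r)}$, i.e.\ the walk conditioned to stay larger than $-n$, leaving $W^{(r)}$ and $z^{(r)}$ unaffected.
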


\begin{proof}We consider the left peeling first. From the proof of $p_c=1/2$ in~\cite{angel_scaling_2004}, we get that \[\{\vert \C \vert =\infty\}=\{T=\infty\} \quad \Pb_p\text{-a.s.}.\] Indeed, with probability one, if $T$ is finite the open segment on the boundary is swallowed by the exploration and $\C$ is confined in a finite region, while if $T$ is infinite the exploration reveals infinitely many open vertices connected to the origin by an open path. In particular, the event $\{\vert \C \vert =\infty\}$ is measurable with respect to $B^{(l)}$, and thus independent of $W^{(l)}$ and $z^{(l)}$ by Lemma \ref{lem:LawBW}. The first assertion follows.

We now focus on the right peeling process and denote by $M_\infty$ the underlying $\UIHPT$. The event $\{T^{(B,l)}_n<\infty\}\cap\{T^{(B,l)}_n<T\}$ has probability one under $\Pb_p(\cdot \mid \vert \C \vert = \infty )$. Thus, the right exploration is performed in a half-planar triangulation $M'_\infty$ with distribution $\Pb_p$ and a ``White-Black-White" boundary condition (the open segment on the boundary has size $n+1$ and its rightmost vertex is the origin of $M_\infty$). Since the stopped left peeling reveals a.s.\ finitely many vertices, the open cluster of the origin $\C$ in $M_\infty$ is infinite iff the open cluster of the origin $\C'$ in $M'_\infty$ is infinite. In other words, the right exploration is distributed as the exploration process of the $\UIHPT$ with the above boundary condition under $\Pb_p(\cdot \mid \vert \C \vert = \infty )$, and is independent of $\{X^{(l)}_k : 0 \leq k \leq \sigma^{(l)}_n\}$. By the same argument as above, \[\{\vert \C \vert =\infty\}=\{T'_{-n}=\infty\}\quad \Pb_p\text{-a.s.,}\quad  \text{where } T'_{-n}:=\inf \left \lbrace k\in\Z_+ : B^{(r)}_k<-n \right \rbrace.\] Again, $\{\vert \C \vert =\infty\}$ is measurable with respect to $B^{(r)}$, and thus independent of $W^{(r)}$ and $z^{(r)}$ by Lemma \ref{lem:LawBW}. This ends the proof.\end{proof}

\subsection{Distribution of the revealed map}\label{sec:RevealedMap}

We consider the peeling process of Algorithm \ref{alg:AlgorithmIIC} under the probability measures $\Pb(\cdot \mid h(\C) \geq n)$ and $\Pb_p(\cdot \mid \vert \C \vert = \infty )$. We denote by $M_\infty$ the underlying infinite triangulation of the half-plane, and by $\Hc$, $\Hc_l$ and $\Hc_r$ the percolation hulls of the origin and its left and right neighbours on the boundary of $M_\infty$.

We denote by $M_{n,m}$ the planar map that is revealed by the peeling process of Algorithm \ref{alg:AlgorithmIIC} with parameters $n,m\in\N$. Namely, the vertices and edges of $M_{n,m}$ are those discovered by the stopped peeling process (including the swallowed regions). However, by convention, we exclude the edges and vertices discovered at the last step of both the left and right (stopped) peeling processes. This will make our description simpler. The root edge of $M_{n,m}$ is the root edge of $M_\infty$. By definition, $M_{n,m}$ is infinite on the event $\{\sigma^{(r)}_m=\infty\}$. The goal of this section is to provide a decomposition of the map $M_{n,m}$.

\bigskip

\noindent\textbf{The percolation hulls.} The origin of $M_\infty$ and its left and right neighbours on the boundary belong to $M_{n,m}$. We keep the notation $\Hc$, $\Hc_l$ and $\Hc_r$ for the associated percolation hulls in $M_{n,m}$. We let \[\underline{W}^{(l)}:=\min_{0\leq k \leq T^{(W,l)}_n}W^{(l)}_k \quad \text{and} \quad  \underline{W}^{(r)}:=\min_{0\leq k \leq T^{(W,r)}_m}W^{(r)}_k,\] and define the finite sequences $w^{(l)}$ and $w^{(r)}$ by 
\begin{equation}\label{eqn:ProcessWmin}
w^{(l)}_k=\left\lbrace
\begin{array}{ccc}
k & \mbox{if} & -\underline{W}^{(l)}\leq k < 0\\
W^{(l)}_{k} & \mbox{if} & 0\leq k\leq T^{(W,l)}_n\\
\end{array}\right. \quad \text{and} \quad w^{(r)}_k=\left\lbrace
\begin{array}{ccc}
k & \mbox{if} & -\underline{W}^{(r)}\leq k<0\\
W^{(r)}_{-k} & \mbox{if} & 0\leq k\leq T^{(W,r)}_m\\
\end{array}\right..
\end{equation} We define equivalence relations on $\{-T^{(W,l)}_n, \ldots, -\underline{W}^{(l)}\}$ and $\{-T^{(W,r)}_m,\ldots,-\underline{W}^{(r)}\}$ by applying \eqref{eqn:EquivC} with $w^{(l)}$ and $w^{(r)}$. By the construction of Section \ref{sec:ContourFunction}, this defines two planar maps which are not looptrees in general. Nonetheless, we denote them by $\Lt_{w^{(l)}}$ and $\Lt_{w^{(r)}}$ to keep the notation simple. We replace $\Lt_{w^{(l)}}$ by its image under a reflection. The left (resp.\ right) boundary of $\Lt_{w^{(l)}}$ is the projection of nonpositive (resp.\ nonnegative) integers of $\{-T^{(W,l)}_n, \ldots, -\underline{W}^{(l)}\}$ on $\Lt_{w^{(l)}}$. The same holds when replacing $l$ by $r$, up to inverting left and right boundaries. We choose the root edges consistently. The sequence $w^{(l)}$  and $\Lt_{w^{(l)}}$ are a.s.\ finite, while $w^{(r)}$ and thus $\Lt_{w^{(r)}}$ are infinite on the event $\{\sigma^{(r)}_m=\infty\}$.
We introduce the finite sequences $b^{(l)}$ and $b^{(r)}$ defined by
\begin{equation}\label{eqn:ProcessBmin}
	b^{(l)}_k=B^{(l)}_k, \quad 0\leq k < T^{(B,l)}_n \quad \text{and} \quad b^{(r)}_k=B^{(r)}_k, \quad 0\leq k < T^{(B,r)}_m.
\end{equation} We define an equivalence relation on $\{-T^{(B,l)}_n+1,\ldots,T^{(B,r)}_m-1  \}$ by applying (\ref{eqn:EquivCStar}), with $b^{(l)}$ (resp.\ $b^{(r)}$) playing the role of $C'$ (resp.\ $C$). By the construction of Section \ref{sec:ContourFunction}, this defines a planar map that we denote by $\Lt_{b^{(l)},b^{(r)}}$, which is not a looptree either. (It is also obtained by defining $\Lt_{b^{(r)}}$ and the finite forest of looptrees $\mathbf{F}_{b^{(l)}}$ as above, and gluing them along their boundaries as in Section \ref{sec:ContourFunction}.) The left (resp.\ right) boundary of $\Lt_{b^{(l)},b^{(r)}}$ is the projection of nonpositive (resp.\ nonnegative) integers of $\{-T^{(B,l)}_n+1,\ldots,T^{(B,r)}_m-1  \}$ (and we choose the root edge consistently). Again, $\Lt_{b^{(l)},b^{(r)}}$ is infinite on the event $\{\sigma^{(r)}_m=\infty\}$.

\begin{Prop}\label{prp:HullRevealedMap} Let $n,m\in\N$ such that $n\geq m$. Under $\Pb(\cdot \mid h(\C) \geq n)$ and $\Pb_p(\cdot \mid \vert \C \vert = \infty )$, in the map $M_{n,m}$, the percolation hulls $\Hc$, $\Hc_l$ and $\Hc_r$ are the independent random maps $\Lt_{b^{(l)},b^{(r)}}$, $\Lt_{w^{(l)}}$ and $\Lt_{w^{(r)}}$ in which each internal face of degree $l\geq 2$ is filled in with an independent triangulation with distribution $\mathbb{W}_l$ equipped with a Bernoulli percolation model with parameter $p_c$ (resp.\ $p$).\end{Prop}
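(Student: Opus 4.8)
The plan is to carry out the same bookkeeping as in the proof of Proposition \ref{prp:BoundaryPercolationHull}, but now tracking three hulls simultaneously and with the conditioned step distributions supplied by Lemmas \ref{lem:LawBWUnderPn} and \ref{lem:LawBWUnderPp}. First I would treat the left peeling process. As in Proposition \ref{prp:BoundaryPercolationHull}, I claim that the part $\mathbf{S}^{(l)}_k$ of $\Scoop(\Hc_l)$ (the scooped-out white hull on the left) discovered at step $\sigma^{(W,l)}_k$ is isomorphic to the map obtained by applying the construction of Section \ref{sec:ContourFunction} to $\{W^{(l)}_0,\ldots,W^{(l)}_k\}$; this follows by the same two-case induction (an inner closed vertex is revealed, so $W^{(l)}$ increases by one; or a closed boundary vertex $l$ edges away is revealed, so $W^{(l)}$ decreases by $l$ and a loop of perimeter $l+1$ is created). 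The only novelty is that the left peeling is stopped at time $\sigma^{(l)}_n$, at which point the relevant portion of $W^{(l)}$ is $\{W^{(l)}_k : 0\le k\le T^{(W,l)}_n\}$ together with its running-minimum prefix coded by the negative indices in \eqref{eqn:ProcessWmin} — this is exactly why the truncated-and-extended sequence $w^{(l)}$ appears, and why $\Lt_{w^{(l)}}$ is generally not a looptree (the walk is not a full excursion). By the spatial Markov property each created loop of perimeter $l+1$ is filled in with an independent $\mathbb{W}_{l+1}$-triangulation carrying an independent percolation of parameter $p_c$ (resp.\ $p$); these are the irreducible components of $\Hc_l$. The same argument with the roles of left/right boundaries swapped (and a reflection) identifies $\Hc_r$ with $\Lt_{w^{(r)}}$ filled in analogously.

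Next I would treat the open hull $\Hc$. Here the black processes $B^{(l)}$ and $B^{(r)}$ both contribute: the left peeling reveals the left boundary of $\Hc$ and the right peeling reveals its right boundary. Arguing as before, the left boundary of $\Hc$ revealed by the stopped left peeling is coded by $\{B^{(l)}_k : 0\le k<T^{(B,l)}_n\}=b^{(l)}$, giving a \emph{forest} of looptrees $\mathbf{F}_{b^{(l)}}$ hanging off the left boundary (as in Remark \ref{rk:Forest}), while the right boundary is coded by $\{B^{(r)}_k : 0\le k<T^{(B,r)}_m\}=b^{(r)}$, giving $\Lt_{b^{(r)}}$; the two are glued along their shared boundary exactly as in the construction of $\Lt_{C,C'}$ in Section \ref{sec:ContourFunction} (with $C=b^{(r)}$, $C'=b^{(l)}$), yielding $\Lt_{b^{(l)},b^{(r)}}$. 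The convention of \eqref{eqn:ProcessCStar}--\eqref{eqn:EquivCStar} is precisely tailored to this gluing. Again by the spatial Markov property each loop is independently filled with a percolated $\mathbb{W}_l$-triangulation. One must be a little careful that the map $M_{n,m}$, as defined, excludes the vertices and edges discovered at the very last step of each peeling — this is consistent with killing $b^{(l)},b^{(r)},w^{(l)},w^{(r)}$ strictly before the hitting times, and is exactly why these sequences are stopped at $T^{(\cdot,\cdot)}-1$ rather than at the hitting time itself.

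For independence, I would invoke Lemma \ref{lem:LawBWUnderPn} / Lemma \ref{lem:LawBWUnderPp}: under either conditioning, $B^{(l)}$, $W^{(l)}$, and (conditionally) the sequences $z^{(l)}$, $z^{(r)}$ are mutually independent, $X^{(r)}$ is independent of the stopped left peeling $\{X^{(l)}_k : 0\le k\le\sigma^{(l)}_n\}$, and within the right peeling $B^{(r)}$ and $W^{(r)}$ are independent. Since $\Hc$ is a measurable function of $(b^{(l)},b^{(r)})$ and its filling Boltzmann triangulations, $\Hc_l$ of $(w^{(l)})$ and its fillings, and $\Hc_r$ of $(w^{(r)})$ and its fillings — with all the Boltzmann fillings drawn independently given the loop structure by the spatial Markov property — the three hulls are independent as claimed. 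The step I expect to be the main obstacle is the combinatorial verification that the gluing of $\mathbf{F}_{b^{(l)}}$ to $\Lt_{b^{(r)}}$ produced by the two peeling processes really is the map $\Lt_{b^{(l)},b^{(r)}}$ of Section \ref{sec:ContourFunction}: one has to match, step by step, how the left and right peelings alternately bite into the boundary of $\Hc$ and how swallowed regions on each side are recorded, and check that the identification of the two boundaries (and the placement of the origin and root edge) agrees with \eqref{eqn:EquivCStar}. The analogous point for a single boundary is routine after Proposition \ref{prp:BoundaryPercolationHull}, but the bilateral case, together with the truncation conventions defining $M_{n,m}$, requires care. I would handle it by first checking the claim for $M_{n,m}$ restricted to the hulls only (ignoring the necklaces that link them), where the peeling order is transparent, and then noting that the $z$-variables, which encode how the peeled edges alternate between the two sides, play no role in the hull structure itself.
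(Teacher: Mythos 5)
Your proposal is essentially the same argument as the paper's: you reduce to the single-hull bookkeeping of Proposition \ref{prp:BoundaryPercolationHull}, identify $\Hc_l$ with $\Lt_{w^{(l)}}$ and $\Hc_r$ with $\Lt_{w^{(r)}}$ from the white processes, recognize that the left and right contours of $\Hc$ are coded by $b^{(l)}$ and $b^{(r)}$ respectively and glued in the manner of $\Lt_{C,C'}$ (with $C=b^{(r)}$, $C'=b^{(l)}$, matching \eqref{eqn:EquivCStar}), invoke the spatial Markov property for the Boltzmann fillings, and draw independence from Lemmas \ref{lem:LawBWUnderPn}/\ref{lem:LawBWUnderPp}. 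You correctly flag the gluing of $\mathbf{F}_{b^{(l)}}$ to $\Lt_{b^{(r)}}$ as the delicate step; the paper resolves it exactly as you anticipate, by explicitly matching the future-infimum set $\mathsf{V}_l$ of $b^{(l)}$ (the open segment of size $n+1$ visible at the start of the right peeling) with the running-infimum set $\mathsf{V}_r$ of $b^{(r)}$, sending each $k\in\mathsf{V}_r$ with $B^{(r)}_k=-j$ to the $(j+1)$-th element of $\mathsf{V}_l$ (the constraint $n\ge m$ guarantees this is well defined). One small inaccuracy: the left and right peelings do not ``alternately'' act on $\Hc$ -- the left peeling runs to completion before the right peeling begins -- but the identification you describe is still the correct one, since the right peeling's swallowed regions reattach to the boundary segment created by the completed left peeling.
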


\begin{proof}The proof follows the same lines as Proposition \ref{prp:BoundaryPercolationHull}, to which we refer for more details. We begin with the left peeling, which follows the percolation interface between $\Hc_l$ and $\Hc$. On the one hand, the right contour of $\Hc_l$ is encoded by $\{W^{(l)}_k : 0\leq k \leq T^{(W,l)}_n\}$ and $-\underline{W}^{(l)}$ vertices are discovered on the left boundary of $M_\infty$. We obtain the map $\Lt_{w^{(l)}}$. On the other hand, the left contour of $\Hc$ is encoded by $b^{(l)}$, which defines the forest of finite looptrees $\mathbf{F}_{b^{(l)}}$ (by applying \eqref{eqn:EquivC}). We now deal with the right peeling, that starts a.s.\ in a triangulation of the half-plane with an open segment of size $n+1$ on the boundary. This segment corresponds (up to the last vertex) to the right boundary of $\mathbf{F}_{b^{(l)}}$, i.e.\ to the set \begin{equation}\label{eqn:OpenSegment}
	\mathsf{V}_l:=\left\lbrace 0\leq k<T^{(B,l)}_n : B^{(l)}_{k}=\inf_{k\leq i< T^{(B,l)}_n}{B^{(l)}_i} \right \rbrace.
\end{equation} The right peeling follows the percolation interface between $\Hc$ and $\Hc_r$. In particular, the right contour of $\Hc$ is encoded by $b^{(r)}$. By construction, vertices associated to the set
\begin{equation}
\label{eqn:OpenSegmentR}
	\mathsf{V}_r:=\left\lbrace 0\leq k<T^{(B,r)}_m : B^{(r)}_{k}=\inf_{0\leq i\leq k}{B^{(r)}_i} \right \rbrace
\end{equation} are identified with the right boundary of $\mathbf{F}_{b^{(l)}}$. Precisely, every $k\in \mathsf{V}_r$ such that $B^{(r)}_{k}=-j$ is matched to the $(j+1)$-th element of $\mathsf{V}_l$ (note that $-\inf(b^{(r)})\leq m\leq n$). We obtain the map $\Lt_{b^{(l)},b^{(r)}}$. Finally, $\{W^{(r)}_k : 0\leq k \leq T^{(W,r)}_m\}$ encodes the left contour of $\Hc_r$ and $-\underline{W}^{(r)}$ vertices are discovered on the right boundary of $M_\infty$, which gives the map $\Lt_{w^{(r)}}$. 

The spatial Markov property shows that the finite faces of $\Lt_{b^{(l)},b^{(r)}}$, $\Lt_{w^{(l)}}$ and $\Lt_{w^{(r)}}$ are filled in with independent percolated Boltzmann triangulations with a simple boundary (the boundary conditions are fixed by the hulls and the percolation parameter by the underlying model). Since $M_{n,m}$ is the map revealed by the peeling process, we then recover the whole percolation hulls $\Hc$, $\Hc_l$ and $\Hc_r$.\end{proof}

We now focus on the connection between the percolation hulls in $M_{n,m}$. In order to make the next statement simpler, we generalize the definition of the uniform necklace.

\begin{Def}\label{def:UniformNecklace}
	Let $x,y\in \N$ and $\{z_i : 1\leq i \leq x+y\}$ uniform among the set of binary sequence with $x$ ones and $y$ zeros. Define	
	\[S_k:=\sum_{i=1}^{k}{z_i}, \quad 1\leq k \leq x+y.\] The uniform necklace of size $(x,y)$ is obtained from the graph of $[-x,y+1]\cap\Z$ by adding the set of edges $\left\lbrace \left( -S_k,k+1-S_k \right) : 1\leq k \leq x+y \right\rbrace$. Its distribution is denoted by $\mathsf{UN}(x,y)$.\end{Def} By convention, for $x=y=\infty$, $\mathsf{UN}(\infty,\infty)$ is the uniform infinite necklace of Section \ref{sec:StatementResults}.

We now use a construction similar to that preceding Proposition \ref{prp:UniformNecklace}. Let $V_l:=\{c_i : 1\leq i \leq T^{(W,l)}_n+1 \}$ be the corners of the right boundary of $\Hl$ listed in contour order, and similarly for $V_r:=\{c'_i : 1\leq i \leq T^{(B,l)}_n\}$ with the left boundary of $\Hc$. Then, let $N_l$ be the planar map with vertex set $V_l \cup V_r$, such that two vertices are neighbours iff the associated corners are connected by an edge in the $\UIHPT$. The planar map $N_r$ is defined symmetrically with the right boundary of $\Hc$ and the left boundary of $\Hr$.

\begin{Prop}\label{prp:UniformNecklaceIIC}
Let $n,m\in\N$ such that $n\geq m$. Under $\Pb(\cdot \mid h(\C) \geq n)$ and $\Pb_p(\cdot \mid \vert \C \vert = \infty )$, conditionally on $T^{(B,l)}_n$, $T^{(B,r)}_m$, $T^{(W,l)}_n$ and $T^{(W,r)}_m$, the following holds: $N_l$ and $N_r$ are independent uniform necklaces with distribution $\mathsf{UN}(T^{(W,l)}_n, T^{(B,l)}_n-1)$ and $\mathsf{UN}(T^{(B,r)}_m-1, T^{(W,r)}_m)$. Otherwise said, in the map $M_{n,m}$, $\Hc_l$, $\Hc$ and $\Hc_r$ are glued along a pair of independent uniform necklaces with respective size $(T^{(W,l)}_n, T^{(B,l)}_n-1)$ and $(T^{(B,r)}_m-1, T^{(W,r)}_m)$.
\end{Prop}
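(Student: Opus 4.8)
The plan is to mimic the proof of Proposition~\ref{prp:UniformNecklace}, but keeping track of the conditioning imposed by the stopping times. Recall from the construction preceding Proposition~\ref{prp:HullRevealedMap} that the left peeling process explores the percolation interface between $\Hl$ and $\Hc$: for every step $k$ of the left peeling up to $\sigma^{(l)}_n$, the third vertex of the revealed triangle is white exactly when $z^{(l)}_k=0$, in which case the discovered corner belongs to the right boundary of $\Hl$, and is black (and belongs to the left boundary of $\Hc$) when $z^{(l)}_k=1$. There is then an edge between each newly discovered corner and the last corner of the opposite colour that has been revealed, and these are precisely the edges that are peeled along the interface. Hence $N_l$ is the map obtained from the graph of the integer segment of length $T^{(W,l)}_n + T^{(B,l)}_n$ by joining corners according to the sequence $\{z^{(l)}_k : 1\le k\le \sigma^{(l)}_n-1\}$ (the last step $\sigma^{(l)}_n$ is excluded by the convention fixing $M_{n,m}$, and $z^{(l)}_{\sigma^{(l)}_n}=1$). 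Comparing with Definition~\ref{def:UniformNecklace}, $N_l$ is exactly the necklace built from the binary word $\{z^{(l)}_k : 1\le k\le \sigma^{(l)}_n-1\}$, which has $T^{(W,l)}_n$ zeros and $T^{(B,l)}_n-1$ ones.

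Next I would invoke the distributional input. By Lemma~\ref{lem:LawBWUnderPn} (under $\Pb(\cdot\mid h(\C)\ge n)$) and Lemma~\ref{lem:LawBWUnderPp} (under $\Pb_p(\cdot\mid |\C|=\infty)$), the conditionings only alter the law of $\{B^{(l)}_k : 0\le k\le T^{(B,l)}_n\}$ and $\{B^{(r)}_k : 0\le k\le T^{(B,r)}_m\}$; all the remaining assertions of Lemma~\ref{lem:LawBWUnderP} still hold. In particular, conditionally on $T^{(B,l)}_n$ and $T^{(W,l)}_n$, the word $\{z^{(l)}_k : 1\le k\le \sigma^{(l)}_n-1\}$ is uniform among binary sequences with $T^{(W,l)}_n$ zeros and $T^{(B,l)}_n-1$ ones; therefore, conditionally on the four stopping times, $N_l\sim\mathsf{UN}(T^{(W,l)}_n,T^{(B,l)}_n-1)$. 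The identical argument applied to the right peeling (which explores the interface between $\Hc$ and $\Hr$, with the roles of white/black and left/right swapped, so that the relevant word is $\{z^{(r)}_k : 1\le k\le \sigma^{(r)}_m-1\}$ with $T^{(B,r)}_m-1$ ones and $T^{(W,r)}_m$ zeros on $\{\sigma^{(r)}_m<\infty\}$) gives $N_r\sim\mathsf{UN}(T^{(B,r)}_m-1,T^{(W,r)}_m)$.

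For the independence of $N_l$ and $N_r$, and their conditional independence given the stopping times, I would use the last assertion of Lemma~\ref{lem:LawBWUnderP}: $X^{(r)}$ is independent of $\{X^{(l)}_k : 0\le k\le\sigma^{(l)}_n\}$. Since $N_l$ is measurable with respect to $\{X^{(l)}_k : 0\le k\le\sigma^{(l)}_n\}$ (it is a deterministic function of $\{z^{(l)}_k : 1\le k\le\sigma^{(l)}_n-1\}$) and $N_r$ is measurable with respect to $X^{(r)}$, independence of the pair $(N_l,N_r)$ follows, and this independence persists after conditioning on $(T^{(B,l)}_n,T^{(W,l)}_n)$ and $(T^{(B,r)}_m,T^{(W,r)}_m)$ because the former pair is $\{X^{(l)}_k : 0\le k\le\sigma^{(l)}_n\}$-measurable while the latter is $X^{(r)}$-measurable. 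Finally, from Proposition~\ref{prp:HullRevealedMap} the hulls $\Hc_l$, $\Hc$, $\Hc_r$ together with the peeled interface edges cover the part of $M_{n,m}$ revealed along the two interfaces, so $M_{n,m}$ is obtained by gluing these three hulls along $N_l$ and $N_r$ in the sense of $\Psi_{(\mathbf{n},\mathbf{n}')}$; this is the ``otherwise said'' statement.

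The main obstacle I anticipate is purely bookkeeping rather than probabilistic: one must check carefully that the corners listed in $V_l,V_r$ (and their analogues for $N_r$) are in bijective correspondence with the integer segments of Definition~\ref{def:UniformNecklace}, paying attention to the convention that the last peeling step of each (stopped) peeling is excluded from $M_{n,m}$, to the splitting of pinch-points of the boundaries (as in the parenthetical remark before Proposition~\ref{prp:UniformNecklace}), and to the matching of the open segment $\mathsf{V}_l$ with $\mathsf{V}_r$ already used in Proposition~\ref{prp:HullRevealedMap}. Once this combinatorial dictionary between the peeling data and the necklace construction is set up, the identification of the laws and the independence are immediate from the cited lemmas.
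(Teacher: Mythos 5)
Your proposal is correct and follows essentially the same route as the paper's proof: identify $N_l$ and $N_r$ with the necklaces generated by the colour words $\{z^{(l)}_k\}$ and $\{z^{(r)}_k\}$, then conclude by the description of these words in Lemmas \ref{lem:LawBWUnderPn} and \ref{lem:LawBWUnderPp}. The one place to be a little careful, and which you rightly flag as bookkeeping in your final paragraph, is the orientation in Definition \ref{def:UniformNecklace}: there $z_i=1$ advances the \emph{left} boundary $-S_k$, whereas in the left peeling $z^{(l)}_k=1$ (an open vertex) advances the left boundary of $\Hc$, which is glued to the \emph{right} side of $N_l$; so the necklace word for $N_l$ is the complement $\{1-z^{(l)}_k\}$, which has $T^{(W,l)}_n$ ones and $T^{(B,l)}_n-1$ zeros, giving $\mathsf{UN}(T^{(W,l)}_n,T^{(B,l)}_n-1)$ as stated (for $N_r$ the orientations agree, so no complement is needed). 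This matches the corner counts $\#V_l=T^{(W,l)}_n+1$ and $\#V_r=T^{(B,l)}_n$ with the sizes $x+1$, $y+1$ of the two boundaries of $\mathsf{UN}(x,y)$. Apart from making this complementation explicit — which the paper itself also leaves implicit — your argument, including the conditional independence of $N_l$ and $N_r$ via the last assertion of Lemma \ref{lem:LawBWUnderP}, is exactly the intended one.
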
 

\begin{proof}We follow the arguments of Proposition \ref{prp:UniformNecklace}. For every $k\in\Z_+$ such that $Y^{(l,1)}_k\neq 0$, there is an edge between the revealed open corner of the left boundary of $\Hc$ and the last revealed closed corner of the right boundary of $\Hc_l$. The converse occurs when $Y^{(l,2)}_k\neq 0$. Then, $N_l$ is the uniform necklace generated by $\{z^{(l)}_k : 1\leq k < \sigma^{(l)}_n\}$. Similarly, $N_r$ is the uniform necklace generated by $\{z^{(r)}_k : 1\leq k < \sigma^{(r)}_m\}$. We conclude by Lemmas \ref{lem:LawBWUnderPn} and \ref{lem:LawBWUnderPp}.\end{proof}

We obtain a decomposition of $M_{n,m}$ illustrated in Figure \ref{fig:DecompositionFiniteIIC} (in the finite case). The map $M_{n,m}$ is measurable with respect to the processes $w^{(l)}$, $w^{(r)}$, $b^{(l)}$ and $b^{(r)}$, the variables $z^{(l)}$ and $z^{(r)}$ defining the uniform necklaces and the percolated Boltzmann triangulations with a simple boundary that fill in the internal faces.

\begin{figure}[!ht]
\centering
\includegraphics[scale=1.7]{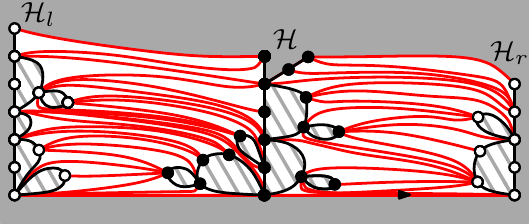}
\caption{The decomposition of the map $M_{n,m}$ into percolation hulls and necklaces.}
\label{fig:DecompositionFiniteIIC}
\end{figure}

\subsection{The $\normalfont{\IIC}$ probability measure}\label{sec:IICMeasure}

In this section, we define an infinite triangulation of the half-plane with distribution $\Piic$. We use infinite looptrees, uniform necklaces and Boltzmann triangulations as building blocks.

\bigskip

\noindent\textbf{Definition of the $\normalfont{\IIC}$.} Let $\Psf$ be a probability measure and under $\Psf$, let $A^{(l)}:=\{A^{(l)}_k : k\in\Z_+\}$ be a random walk with law $P^{\mu \uparrow}_0$. Let also $A^{(r)}:=\{A^{(r)}_k : k\in\Z_+\}$, $V^{(l)}:=\{V^{(l)}_k : k\in\Z_+\}$ and $V^{(r)}:=\{V^{(r)}_k : k\in\Z_+\}$ be random walks with law $P^{\mu}_0$. Finally, let $y^{(l)}=\{y^{(l)}_k : k\in\N\}$ and $y^{(r)}=\{y^{(r)}_k : k\in\N\}$ be sequences of i.i.d.\ variables with Bernoulli distribution of parameter $1/2$. We assume that these processes are all independent under $\Psf$.

Following Section \ref{sec:ContourFunctionRandom}, we define the looptrees $\Lt_{A^{(l)},A^{(r)}}$, $\Lt_{V^{(l)}}$ and $\Lt_{V^{(r)}}$. We replace $\Lt_{V^{(r)}}$ by its image under a reflection (with the root edge going from the vertex $0$ to $-1$). By Propositions \ref{prp:ComponentTree} and \ref{prp:ComponentTreeBiInfinite}, the associated trees of components $\Tr_{A^{(l)},A^{(r)}}$, $\Tr_{V^{(l)}}$ and $\Tr_{V^{(r)}}$ have respective distribution $\GWmm^\infty$, $\GWmm^{(\infty,l)}$ and $\GWmm^{(\infty,r)}$. We agree that the vertices of $\Lt_{A^{(l)},A^{(r)}}$ are open, while vertices of $\Lt_{V^{(l)}}$ and $\Lt_{V^{(r)}}$ are closed. For every vertex $v\in (\Tr_{A^{(l)},A^{(r)}})_\bullet$ (resp.\ $(\Tr_{V^{(l)}})_\bullet$, $(\Tr_{V^{(r)}})_\bullet$) we let $M_v$ (resp.\ $M^l_v$, $M^r_v$) be a Boltzmann triangulation with distribution $\mathbf{W}_{\deg(u)}$ (independent of all the other variables). Then, $\Hc$, $\Hl$ and $\Hr$ are defined by 
\[\Hc:=\Phi^{-1}\left(\Tr_{A^{(l)},A^{(r)}}, \left\lbrace M_v : v \in \left(\Tr_{A^{(l)},A^{(r)}}\right)_\bullet\right\rbrace\right) , \quad  \Hl:=\Phi^{-1}\left(\Tr_{V^{(l)}}, \left\lbrace M^l_v : v \in \left(\Tr_{V^{(l)}}\right)_\bullet\right\rbrace\right),\] and similarly for $\Hr$ replacing $l$ by $r$. Finally, we let $\mathbf{N}_l$ and $\mathbf{N}_r$ be the uniform infinite necklaces with distribution $\mathsf{UN}(\infty,\infty)$ generated by $y^{(l)}$ and $y^{(r)}$. The infinite planar map $M_\infty$ is defined by gluing $(\Hl,\Hc,\Hr)$ along $(\mathbf{N}_l,\mathbf{N}_r)$, i.e.\
\[M_\infty:= \Psi_{(\mathbf{N}_l,\mathbf{N}_r)}(\Hl,\Hc,\Hr).\] In particular, the root edge of $M_\infty$ connects the origin of $\Lt_{A^{(l)},A^{(r)}}$ to that of $\Lt_{V^{(r)}}$. The probability measure $\Piic$ is the distribution of $M_\infty$ under $\Psf$. 

By construction, $M_\infty$ has a proper embedding in the plane and an infinite boundary isomorphic to $\Z$. The infinite looptrees having a unique spine, $M_\infty$ is also one-ended a.s.. More precisely, $M_\infty$ is a.s.\ a triangulation of the half-plane, i.e.\ the proper embedding of an infinite and locally finite connected graph in the upper half-plane $\mathbb{H}$ whose faces are finite and have degree three (with no self-loop). By definition, vertices of $M_\infty$ are coloured and $M_\infty$ has the ``White-Black-White" boundary condition of Figure \ref{fig:InitialColouringIIC}. Moreover, the planar maps $\Hc$, $\Hc_l$ and $\Hc_r$ are the percolation hulls of the origin vertex and its neighbours on the boundary, which justifies the choice of notation.

\bigskip

\noindent\textbf{Exploration of the $\normalfont{\IIC}$.} For every $n,m\in \N$, we define a finite map $M_{n,m}$ under $\Psf$, by replicating the construction of Section \ref{sec:RevealedMap}. Let 
\begin{equation}\label{eqn:TTimeA}
  T^{(A,l)}_n:=\inf \left\lbrace k\in\Z_+ : A^{(l)}_k \geq n \right\rbrace \quad \text{and} \quad T^{(A,r)}_m:=\inf \left\lbrace k\in\Z_+ : A^{(r)}_k \leq -m \right\rbrace,
\end{equation} and conditionally on $T^{(A,l)}_n$ and $T^{(A,r)}_m$, let $T^{(V,l)}_n$ and $T^{(V,r)}_m$ be independent random variables with distribution $\mathsf{NB}(T^{(A,l)}_n,1/2)$ and $\mathsf{NB}(T^{(A,r)}_m,1/2)$. We let 
\[\rho^{(l)}_n:=T^{(A,l)}_n+T^{(V,l)}_n  \quad \text{and} \quad \rho^{(r)}_m:=T^{(A,r)}_m+T^{(V,r)}_m.\] We define $v^{(l)}$ and $v^{(r)}$ as in \eqref{eqn:ProcessWmin} (replacing $W$ by $V$), and $a^{(l)}$ and $a^{(r)}$ as in \eqref{eqn:ProcessBmin} (replacing $B$ by $A$). Note that all the random times considered here are finite $\Psf$-a.s.. Let us consider the finite planar maps $\Lt_{a^{(l)},a^{(r)}}$, $\Lt_{v^{(l)}}$ and $\Lt_{v^{(r)}}$, defined according to the construction of Section \ref{sec:RevealedMap}. As we will see in Proposition \ref{prp:Submap}, these are possibly (though not always) sub-maps of the infinite looptrees $\Lt_{A^{(l)},A^{(r)}}$, $\Lt_{V^{(l)}}$ and $\Lt_{V^{(r)}}$. We now fill in each internal face of degree $l\geq 2$ of the finite maps with an independent percolated Boltzmann triangulation with distribution $\mathbf{W}_l$. We agree that we use the same triangulations to fill in corresponding faces in the finite maps and their infinite counterparts when this is possible. Finally, we glue the right boundary of $\Lt_{v^{(l)}}$ and the left boundary of $\Lt_{a^{(l)},a^{(r)}}$ along the uniform necklace with size $(T^{(V,l)}_n, T^{(A,l)}_n-1)$ generated by $\{y^{(l)}_k : 1\leq k < \rho^{(l)}_n\}$. Similarly, we glue the left boundary of $\Lt_{v^{(r)}}$ and the right boundary of $\Lt_{a^{(l)},a^{(r)}}$ along the uniform necklace with size $(T^{(V,r)}_m, T^{(A,r)}_m-1)$ generated by $\{y^{(r)}_k : 1\leq k < \rho^{(r)}_m\}$. These gluing operations are defined as in Section \ref{sec:StatementResults} provided minor adaptations to the finite setting. The resulting planar map is denoted by $M_{n,m}$. 

\begin{Prop}\label{prp:Submap} A.s., for every $n\in\N$ and every $m\in \N$ such that $m\leq \inf\{ A^{(l)}_k : k \geq T^{(A,l)}_n\}$, $M_{n,m}$ is a sub-map of $M_\infty$.\end{Prop}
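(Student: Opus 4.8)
The plan is to show that the finite map $M_{n,m}$ built "abstractly" under $\Psf$ from the random walks coincides with the piece of the infinite map $M_\infty$ that the peeling process of Algorithm \ref{alg:AlgorithmIIC} would reveal, and that this piece is genuinely a sub-map. The argument has two ingredients: (i) the combinatorial fact that the finite looptrees $\Lt_{a^{(l)},a^{(r)}}$, $\Lt_{v^{(l)}}$, $\Lt_{v^{(r)}}$ sit inside their infinite counterparts $\Lt_{A^{(l)},A^{(r)}}$, $\Lt_{V^{(l)}}$, $\Lt_{V^{(r)}}$ as sub-looptrees under the stated condition on $m$; and (ii) the fact that the gluing along finite uniform necklaces is compatible with the gluing along the infinite uniform necklaces. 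Combining these with the consistency of the filling-in by Boltzmann triangulations (which we arranged by fiat, using the same triangulations for corresponding faces) yields the claim.

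\textbf{Step 1: the looptree inclusions.} First I would treat $\Lt_{V^{(l)}}$. The infinite looptree $\Lt_{V^{(l)}}$ is coded by the full random walk $V^{(l)}=\{V^{(l)}_k : k\in\Z_+\}$ with law $P^\mu_0$, while $\Lt_{v^{(l)}}$ is coded by the excursion-type path $v^{(l)}$ obtained by running $V^{(l)}$ up to time $T^{(V,l)}_n$ and prepending the deterministic ramp of length $-\underline W^{(l)}$ (the "past infimum" part from \eqref{eqn:ProcessWmin}). Since $\{V^{(l)}_k : 0\le k\le T^{(V,l)}_n\}$ is literally an initial segment of $V^{(l)}$, and the equivalence relation \eqref{eqn:EquivC} restricted to $\{-T^{(V,l)}_n,\ldots,-\underline W^{(l)}\}$ is exactly the restriction of the relation defining $\Lt_{V^{(l)}}$ (the ramp part just records the left-boundary vertices of the ambient looptree that have been exposed), $\Lt_{v^{(l)}}$ is the sub-looptree of $\Lt_{V^{(l)}}$ made of the faces touched by the first $T^{(V,l)}_n$ steps. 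No condition on $m$ is needed here. The same reasoning applies to $\Lt_{v^{(r)}}$ inside $\Lt_{V^{(r)}}$ after the reflection. The delicate case is $\Lt_{a^{(l)},a^{(r)}}$ versus $\Lt_{A^{(l)},A^{(r)}}$: the right part $a^{(r)}=\{A^{(r)}_k : 0\le k< T^{(A,r)}_m\}$ is again an initial segment of $A^{(r)}$, so the corresponding portion of $\Lt_{A^{(r)}}$ is a sub-looptree; but the left part is the \emph{forest} $\mathbf{F}_{a^{(l)}}$ obtained by cutting $A^{(l)}$ at its successive infima, whereas in $\Lt_{A^{(l)},A^{(r)}}$ the forest $\mathbf{F}_{A^{(l)}}$ is glued to $\Lt_{A^{(r)}}$ \emph{along its left boundary}, and the gluing uses the future infimum records $R_k$ of $A^{(l)}$ via \eqref{eqn:ProcessCStar}--\eqref{eqn:EquivCStar}. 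For $\Lt_{a^{(l)},a^{(r)}}$ to embed as the "bottom" piece of $\Lt_{A^{(l)},A^{(r)}}$, one needs that the finitely many trees of $\mathbf{F}_{a^{(l)}}$ glued onto the right boundary of $\Lt_{a^{(r)}}$ are glued at the \emph{same} positions as in the infinite picture; since $A^{(r)}$ is run only up to level $-m$, the vertices of the right boundary of $\Hc$ that are matched in $M_{n,m}$ are exactly those at future-infimum levels $0,1,\ldots,m-1$ of $b^{(r)}$, i.e. the first $m$ trees of the forest, and these are unchanged provided the infinite walk $A^{(l)}$ does not return below level $m$ after time $T^{(A,l)}_n$ — which is precisely the hypothesis $m\le \inf\{A^{(l)}_k : k\ge T^{(A,l)}_n\}$. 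This is the step I expect to be the \textbf{main obstacle}: one must carefully check that this condition on $m$ guarantees that the portion of the boundary of $\Hc$ used for the right necklace is "frozen" (will never be modified by future steps of the left spine), so that the partial gluing in $M_{n,m}$ agrees with the global gluing in $M_\infty$. Concretely, one verifies that the equivalence classes of \eqref{eqn:EquivCStar} restricted to $\{-T^{(A,l)}_n+1,\ldots,T^{(A,r)}_m-1\}$ are exactly the restrictions of the global classes, using that the $\sup$ appearing in the first line of \eqref{eqn:EquivCStar} is taken over $(i\wedge j)\vee 0\le k\le i\vee j$ and that $A^{(l)}$ staying above $m$ past time $T^{(A,l)}_n$ keeps the relevant suprema below the truncation level.

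\textbf{Step 2: necklaces and filling-in.} Once the three looptree inclusions are established, the remaining identifications are routine. The finite uniform necklace of size $(T^{(V,l)}_n, T^{(A,l)}_n-1)$ generated by $\{y^{(l)}_k : 1\le k<\rho^{(l)}_n\}$ is, by Definition \ref{def:UniformNecklace}, precisely the initial segment of the infinite uniform necklace $\mathbf N_l$ generated by $y^{(l)}$, because the infinite necklace's first $\rho^{(l)}_n-1$ triangles are governed by exactly those $y^{(l)}_k$; hence the gluing of $\Lt_{v^{(l)}}$ to the left boundary of $\Lt_{a^{(l)},a^{(r)}}$ along this finite necklace is the restriction of the gluing defining $M_\infty$ to the relevant half-edges. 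The same holds on the right with $\mathbf N_r$. Finally, the Boltzmann triangulations: by construction we agreed to use, for each internal face of $\Lt_{a^{(l)},a^{(r)}}$, $\Lt_{v^{(l)}}$, $\Lt_{v^{(r)}}$ that is also a face of the corresponding infinite looptree, the \emph{same} triangulation $M_v$, $M^l_v$, $M^r_v$ as in $M_\infty$; Step 1 guarantees that every such finite face is indeed a face of the infinite looptree, so the filled-in finite pieces are sub-maps of the filled-in infinite hulls $\Hc$, $\Hl$, $\Hr$. Assembling: $M_{n,m}$ is obtained from $M_\infty$ by keeping these sub-hulls together with the corresponding initial segments of the two necklaces, and discarding everything else — which is exactly the definition of a sub-map. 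I would close by noting that all the "for every $n\in\N$ and every admissible $m$" quantifiers are handled simultaneously because the constructions are consistent in $(n,m)$ and only countably many pairs occur, so the almost sure statement follows from the almost sure finiteness of all the stopping times $T^{(A,l)}_n$, $T^{(A,r)}_m$, $T^{(V,l)}_n$, $T^{(V,r)}_m$ under $\Psf$.
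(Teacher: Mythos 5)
Your argument is correct and matches the paper's own proof in structure and in the key idea: the easy inclusions $\Lt_{v^{(l)}}\subseteq\Lt_{V^{(l)}}$, $\Lt_{v^{(r)}}\subseteq\Lt_{V^{(r)}}$, $\mathbf{F}_{a^{(l)}}\subseteq\mathbf{F}_{A^{(l)}}$, $\Lt_{a^{(r)}}\subseteq\Lt_{A^{(r)}}$ and the finite-necklace inclusion are established directly from the fact that \eqref{eqn:EquivC} and \eqref{eqn:EquivCStar} only look at a bounded window, and the one genuine obstruction is the compatibility of $\sim_a$ with $\sim_A$ when the forest $\mathbf{F}_{a^{(l)}}$ is glued to $\Lt_{a^{(r)}}$, which is precisely what the hypothesis $m\le\inf\{A^{(l)}_k:k\ge T^{(A,l)}_n\}$ rules out (the paper phrases the obstruction as \eqref{eqn:ConditionNonIntersection}). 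The only quibble is a small slip of terminology at the end of your Step 1 — it is the \emph{infimum} $\inf_{k\le i\wedge j}C^*_k$ in \eqref{eqn:EquivCStar}, not a supremum, whose value could change when passing from the truncated window to the whole of $A^{(l)}$; the $\sup$ term lies entirely within the truncated range and is automatically $<m$, while the hypothesis on $A^{(l)}$ guarantees the extra terms entering the $\inf$ are all $\ge m$, so the inequalities defining $\sim$ are unchanged.
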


\begin{proof}Let $n,m\in \N$. We first consider the equivalence relation $\sim$ defined by applying (\ref{eqn:EquivC}) to $V^{(l)}$. The relation $i\sim j$ is determined by the values $\{V^{(l)}_k : i\wedge j\leq k \leq i\vee j\}$. Thus, the restriction of $\sim$ to $[-T^{(V,l)}_n,-\underline{V}^{(l)}]\cap \Z$ is isomorphic to $\Lt_{v^{(l)}}$, and $\Lt_{v^{(l)}}$ is a sub-map of $\Lt_{V^{(l)}}$. The same argument proves that $\Lt_{v^{(r)}}$ is a sub-map of $\Lt_{V^{(r)}}$, and that $\mathbf{F}_{a^{(l)}}$ (resp.\ $\Lt_{a^{(r)}}$) is a sub-map of the forest $\mathbf{F}_{A^{(l)}}$ (resp.\ the looptree $\Lt_{A^{(r)}}$). Moreover, the finite necklaces generated by $\{y^{(l)}_k : 1\leq k < \rho^{(l)}_n\}$ and $\{y^{(r)}_k : 1\leq k < \rho^{(r)}_m\}$ are sub-maps of the infinite uniform necklaces $\mathbf{N}_l$ and $\mathbf{N}_r$ generated by $y^{(l)}$ and $y^{(r)}$ in $M_\infty$. 

The only thing that remains to check is the gluing of $\mathbf{F}_{a^{(l)}}$ and $\Lt_{a^{(r)}}$ into $\Lt_{a^{(l)},a^{(r)}}$. Let $a:=\{a_k :-T^{(A,l)}_n+1 \leq k \leq T^{(A,r)}_m-1 \}$ and $A:=\{A_k : k\in \Z\}$ be defined from $(a^{(l)},a^{(r)})$ and $(A^{(l)},A^{(r)})$ as in (\ref{eqn:ProcessCStar}). We let $\sim_a$ and $\sim_A$ be the associated equivalence relations, defined from (\ref{eqn:EquivCStar}). When $m$ is too large compared to $n$, $\sim_a$ is finer than the restriction of $\sim_A$ to $[ -T^{(A,l)}_n+1,T^{(A,r)}_m-1 ]\cap\Z$ (see Figure \ref{fig:GluingLeftRightBlackHull} for an example). In other words, $\Lt_{a^{(l)},a^{(r)}}$ cannot be realized as a subset of $\Lt_{A^{(l)},A^{(r)}}$. By definition of $\sim_A$, such a situation occurs only if 
\begin{equation}\label{eqn:ConditionNonIntersection}
  -\inf_{0 \leq k \leq T^{(A,r)}_m-1 }{A^{(r)}_k}> \inf_{k \geq T^{(A,l)}_n-1}{A^{(l)}_k},
\end{equation} which is avoided for $m\leq \inf\{ A^{(l)}_k : k \geq T^{(A,l)}_n\}$. This concludes the proof.\end{proof}

\begin{figure}[!ht]
\centering
\includegraphics[scale=1.2]{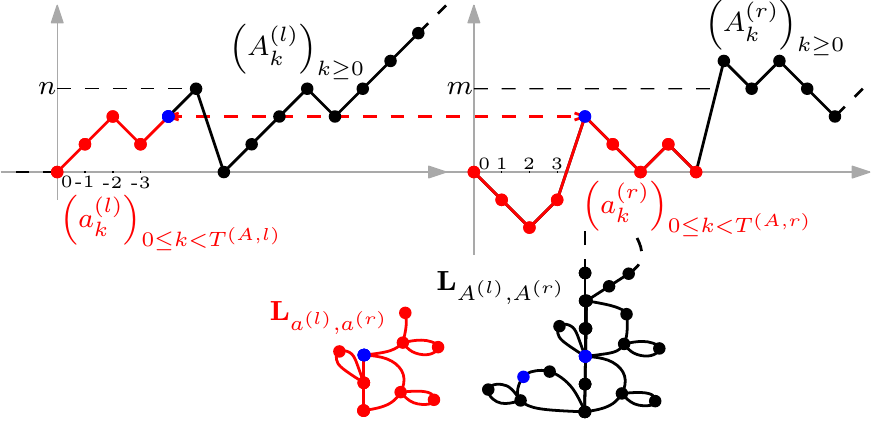}
\caption{A configuration where $M_{n,m}$ cannot be realized as a subset of $M_\infty$.}
\label{fig:GluingLeftRightBlackHull}
\end{figure}

\bigskip

\noindent\textbf{Convergence of the exploration processes.} The goal of this paragraph is to prove that the distributions of the maps $M_{n,m}$ are close in total variation distance under $\mathsf{P}$, $\Pb(\cdot \mid h(\C) \geq n)$ and $\Pb_p(\cdot \mid \vert \C \vert = \infty )$ for a suitable choice of $n$, $m$, and $p$. In the next part, $\mathcal{F}$ denotes the set of Borel functions from $\Z^\N$ to $[0,1]$. For every process $Z$ and every stopping time $T$, $Z':=\{Z_k : 0\leq k\leq T\}$ is interpreted as an element of $\Z^\N$ by putting $Z'_k=0$ for $k>T$. We use the notation of Section \ref{sec:RandomWalk} and start with two preliminary lemmas.

\begin{Lem}\label{lem:DistanceTVLeftExploration}For every $n,m\in\N$ such that $n\geq m$, 
\[\lim_{p\downarrow p_c}\sup_{F\in \mathcal{F}} \left| \mathbb{E}_p\left( F\left( B_k^{(l)} : 0\leq k \leq T^{(B,l)}_n \right) \ \middle| \ \vert \C \vert=\infty \right)-E^{\mu \uparrow}_0\left( F\left( Z_k : 0\leq k \leq T_{n} \right) \right) \right| =0. \] 
\end{Lem}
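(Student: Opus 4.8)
The plan is to show that the two probability measures governing the process $\{B_k^{(l)} : 0 \le k \le T_n^{(B,l)}\}$ --- namely, the law of $\mu_{p}$-random walk conditioned to stay nonnegative (which by Lemma~\ref{lem:LawBWUnderPp} is the law under $\Pb_p(\cdot \mid |\C| = \infty)$), and the law $P_0^{\mu \uparrow}$ of $\mu$-random walk conditioned to stay nonnegative --- converge in total variation on the event $\{T_n^{(B,l)} < T\}$ as $p \downarrow p_c$. Since both measures, once suitably killed, are supported on the (countable) set of finite nonnegative paths from $0$ reaching $[n,+\infty)$ before $(-\infty,0)$, the supremum over $F \in \mathcal{F}$ is exactly the total variation distance between these two discrete laws, so it suffices to establish pointwise convergence of the probability masses (then invoke Scheffé's lemma, or just the elementary fact that pointwise convergence of discrete densities of probability measures forces total variation convergence).

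The key steps, in order: First, recall that $\mu_p$ is the renormalized restriction of $\mu_p^0$ to $\Z^*$, and that $\mu_p \Rightarrow \mu = \mu_{p_c}$ weakly as $p \downarrow p_c$, with $\mu_p$ having positive drift for $p > p_c$ and $\mu$ centered; all of these are upwards-skip-free. Second, apply Lemma~\ref{lem:ConvergenceRWCSPDrift} (with $\nu_p = \mu_p$ and $\nu = \mu$), which gives $P_x^{\mu_p \uparrow} \Rightarrow P_x^{\mu \uparrow}$ in the sense of finite-dimensional distributions. Third, upgrade this finite-dimensional convergence to convergence of the masses of the \emph{killed} paths: fix a finite nonnegative path $x_0 = 0, x_1, \ldots, x_k$ with $x_k \ge n$, $x_i < n$ for $i < k$, and $x_i \ge 0$ for all $i$; then $\{T_n = k, Z_0 = x_0, \ldots, Z_k = x_k\}$ is a cylinder event depending only on the first $k+1$ coordinates, so its probability under $P_0^{\mu_p \uparrow}$ converges to its probability under $P_0^{\mu \uparrow}$ by Lemma~\ref{lem:ConvergenceRWCSPDrift}. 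Fourth, conclude: the limiting measure $P_0^{\mu \uparrow}(\cdot \cap \{T_n < \infty\})$ is a probability measure (the conditioned walk reaches $[n,\infty)$ a.s., since it stays nonnegative forever and, $\mu$ being centered and non-degenerate, $\limsup Z_k = +\infty$), so the pointwise convergence of masses of a sequence of probability measures to the masses of a probability measure yields total variation convergence. Finally, reconcile this with the statement: by Lemma~\ref{lem:LawBWUnderPp}, $\{B_k^{(l)} : 0 \le k \le T_n^{(B,l)}\}$ under $\Pb_p(\cdot \mid |\C| = \infty)$ is precisely $\mu_p$-random walk conditioned to stay nonnegative, killed at entrance in $[n,+\infty)$, i.e.\ its law is $(P_0^{\mu_p \uparrow})$ restricted to $\{T_n < \infty\} = \{T_n < T\}$ read off the first $T_n+1$ coordinates; by Lemma~\ref{lem:RWCSPkilled}, under $P_0^{\mu \uparrow}$ the process $\{Z_k : 0 \le k \le T_n\}$ has law $P_0(\cdot \mid T_n < T)$, which matches the right-hand side of the claimed limit.

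The main obstacle is the passage from finite-dimensional (weak) convergence in Lemma~\ref{lem:ConvergenceRWCSPDrift} to convergence of the killed-path masses and hence total variation: a priori weak convergence of laws on $\Z^\N$ does not control events involving the unbounded killing time $T_n$. The resolution is that $T_n$ is, on the relevant event, determined by finitely many coordinates (it is the first index at which the path enters $[n,\infty)$), so each event $\{T_n = k, Z_{[0,k]} = (x_i)\}$ is a single cylinder; summing the pointwise convergences over all such paths and using that the total mass is conserved in the limit (because the limit is a probability measure, the conditioned walk reaching level $n$ almost surely) gives tightness and upgrades everything to total variation. One should also remark that $\mu_p$ genuinely has positive drift for $p > p_c$ --- this is noted right after Lemma~\ref{lem:LawBW} --- so that $P_0^{\mu_p \uparrow}$ is well-defined in the elementary sense (as $P_0^{\mu_p}(T = \infty) > 0$), and the renewal-function identity $V_p(x) = P_x^{\mu_p}(T=\infty)/P_0^{\mu_p}(T=\infty)$ used in Lemma~\ref{lem:ConvergenceRWCSPDrift} applies directly.
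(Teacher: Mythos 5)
Your proof is correct and rests on the same two pillars as the paper's: Lemma~\ref{lem:LawBWUnderPp} to identify the law of the stopped process with $P_0^{\mu_p\uparrow}$ killed at $T_n$, and Lemma~\ref{lem:ConvergenceRWCSPDrift} for finite-dimensional convergence $P_0^{\mu_p\uparrow}\Rightarrow P_0^{\mu\uparrow}$, combined with a.s.\ finiteness of $T_n$. The only organizational difference is how you pass from finite-dimensional convergence to total variation: the paper truncates at a deterministic time $K$ (choosing $K$ so both tails $P(T_n>K)\leq\varepsilon$, then comparing on the first $K+1$ coordinates), whereas you note directly that each killed-path event $\{T_n=k,\,Z_0=x_0,\ldots,Z_k=x_k\}$ is a cylinder, get pointwise convergence of masses on the countable space of killed paths, and invoke Scheff\'e. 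Both arguments implicitly use the same elementary fact (pointwise convergence of discrete probability mass functions implies total variation convergence); your version is marginally more direct, the paper's is marginally more explicit about where the a.s.\ finiteness of $T_n$ enters. The invocation of Lemma~\ref{lem:RWCSPkilled} at the end is a harmless aside, not needed since the target in the statement is already written under $P_0^{\mu\uparrow}$.
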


\begin{proof}Let $\varepsilon>0$, $n,m\in\N$ such that $n\geq m$ and $F\in \mathcal{F}$. By Lemma \ref{lem:LawBWUnderPp},
\[\mathbb{E}_p\left( F\left( B_k^{(l)} : 0\leq k \leq T^{(B,l)}_n \right) \ \middle| \ \vert \C \vert=\infty\right)=E_0^{\mu_p \uparrow}\left( F\left( Z_k : 0\leq k \leq T_{n} \right) \right).\] Furthermore, by Lemma \ref{lem:ConvergenceRWCSPDrift}, for every $K\in\N$, 
\[P_0^{\mu_p \uparrow}(T_{n}> K)\underset{p\downarrow p_c}{\longrightarrow}P_0^{\mu \uparrow}(T_n> K).\] Since $T_n$ is finite $P_0^{\mu \uparrow}$-a.s., up to choosing $K$ large enough and then $p$ close enough to $p_c$,
\[\max \left(P_0^{\mu_p \uparrow}(T_{n}> K),P_0^{\mu \uparrow}(T_{n}> K)\right)\leq \varepsilon.\]Then,
	\begin{align*}
		\sup_{F\in \mathcal{F}} \left| \mathbb{E}_p\left( F\left( B_k^{(l)} : 0\leq k \leq T^{(B,l)}_n \right) \ \middle| \ \vert \C \vert=\infty\right)-E^{\mu \uparrow}_0\left( F\left( Z_k : 0\leq k \leq T_{n} \right) \right) \right|\\
		\leq 2\varepsilon + \sup_{F:\Z^{K+1}\rightarrow[0,1]} \left| E_0^{\mu_p \uparrow}\left( F\left( Z_0, \ldots, Z_K \right) \right)-E^{\mu \uparrow}_0\left( F\left( Z_0,\ldots,Z_K \right) \right) \right|
	\end{align*} and by Lemma \ref{lem:ConvergenceRWCSPDrift},
\[\sup_{F:\Z^{K+1}\rightarrow[0,1]} \left| E_0^{\mu_p \uparrow}\left( F\left( Z_0,\ldots,Z_K \right) \right)-E^{\mu \uparrow}_0\left( F\left( Z_0,\ldots,Z_K \right) \right) \right|\underset{p\downarrow p_c}{\longrightarrow}0. \] 	This concludes the proof since $\varepsilon$ is arbitrary.\end{proof}

\begin{Lem}\label{lem:DistanceTVRightExploration}For every $\varepsilon>0$ and $m\in\N$, there exists $N\in\N$ such that for every $n\geq N$
\[\limsup_{p\downarrow p_c}\sup_{F\in \mathcal{F}} \left| \mathbb{E}_p\left( F\left( B_k^{(r)} : 0\leq k \leq T^{(B,r)}_m \right) \ \middle| \ \vert \C \vert=\infty\right)-E^\mu_0\left( F\left( Z_k : 0\leq k \leq T_{-m} \right) \right) \right| \leq \varepsilon. \]
\end{Lem}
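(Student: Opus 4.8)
The plan is to reduce the statement to an estimate about random walks using Lemma \ref{lem:LawBWUnderPp}, and then to combine Lemmas \ref{lem:ConvergenceRWCSPDrift} and \ref{lem:ConvergenceRWCSPLargeX} through a truncation at a fixed time horizon. By Lemma \ref{lem:LawBWUnderPp}, under $\Pb_p(\cdot\mid |\C|=\infty)$ the process $\{B^{(r)}_k:0\le k\le T^{(B,r)}_m\}$ has the law of the $\mu_p$-walk started at $0$, conditioned to stay $\ge -n$, and killed at its first entrance in $(-\infty,-m]$. Shifting by $n$, and using that for $p>p_c$ this conditioning is honest ($P^{\mu_p}_n(T=\infty)>0$), this is the walk with law $P^{\mu_p\uparrow}_n$ killed at its first entrance in $(-\infty,n-m]$ and then translated by $-n$. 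Writing $\Theta_n$ for the map sending a path $z$ (staying $\ge 0$ from $n$) to $(z_k-n:0\le k\le\tau_n(z))$, where $\tau_n(z):=\inf\{k:z_k\le n-m\}$ and the sequence is padded by $0$ afterwards, the law to be estimated is $(\Theta_n)_\ast P^{\mu_p\uparrow}_n$, and it is to be compared in total variation with $\Xi_\ast P^\mu_0$, where $\Xi(z)=(z_k:0\le k\le T_{-m}(z))$ and $\mu=\mu_{p_c}$. Since $\mu$ is centered and upwards-skip-free, $T_{-m}<\infty$ holds $P^\mu_0$-a.s.; in contrast, $\tau_n$ (equivalently $T^{(B,r)}_m$) is infinite with positive probability under $P^{\mu_p\uparrow}_n$ when $p>p_c$, and this is precisely the source of difficulty.

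Given $\varepsilon>0$ and $m\in\N$, I would first pick $K$ so large that $P^\mu_0(T_{-m}>K)<\varepsilon/8$; we may also take $N\ge m$. The point of the truncation is that on $\{\tau_n\le K\}$ (resp.\ $\{T_{-m}\le K\}$) the killed path is a deterministic function of the first $K+1$ coordinates only, and $\{\tau_n>K\}$, $\{T_{-m}>K\}$ depend only on those coordinates, while the change of variables $z\mapsto z-n$ identifies $\Theta_n$ with $\Xi$ on such paths. By Lemma \ref{lem:ConvergenceRWCSPLargeX}, $(\phi_n)_\ast P^{\mu\uparrow}_n\Rightarrow P^\mu_0$ in the sense of finite-dimensional distributions; since these are measures on the countable set $\Z^{K+1}$, this is in fact total-variation convergence of the $(K+1)$-dimensional marginals. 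Hence one can fix $N$ so that for all $n\ge N$ both $P^{\mu\uparrow}_n(\tau_n>K)<\varepsilon/4$ (an event measurable with respect to the first $K+1$ coordinates) and the total variation distance between the $(K+1)$-marginals of $(\phi_n)_\ast P^{\mu\uparrow}_n$ and of $P^\mu_0$ is $<\varepsilon/8$.

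Then, fixing $n\ge N$ and letting $p\downarrow p_c$: since $\mu_p\Rightarrow\mu$ with $\mu_p$ upwards-skip-free of positive drift and $\mu$ centered, Lemma \ref{lem:ConvergenceRWCSPDrift} gives $P^{\mu_p\uparrow}_n\Rightarrow P^{\mu\uparrow}_n$ in finite-dimensional distributions, hence (by discreteness) in total variation on $\Z^{K+1}$; in particular $P^{\mu_p\uparrow}_n(\tau_n>K)\to P^{\mu\uparrow}_n(\tau_n>K)<\varepsilon/4$. For any $F\in\mathcal{F}$, splitting $F$ composed with the killing map according to $\{\tau_n\le K\}$ versus its complement yields
\[
\Bigl|E^{\mu_p\uparrow}_n\!\bigl(F(\Theta_n(Z))\bigr)-E^\mu_0\!\bigl(F(\Xi(Z))\bigr)\Bigr|\ \le\ P^{\mu_p\uparrow}_n(\tau_n>K)+P^\mu_0(T_{-m}>K)+\Delta_{p,n}^{(K)},
\]
where $\Delta_{p,n}^{(K)}$ is the discrepancy of the two laws restricted to the first $K+1$ coordinates on the truncated events, which one routes through $P^{\mu\uparrow}_n$ and bounds by the total variation distance of the $(K+1)$-marginals of $P^{\mu_p\uparrow}_n$ and $P^{\mu\uparrow}_n$, plus the $\varepsilon/8$ from the previous paragraph. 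Since these bounds do not depend on $F$, taking $\sup_{F\in\mathcal{F}}$ and then $\limsup_{p\downarrow p_c}$ gives a total bound $\le\varepsilon/4+\varepsilon/8+\varepsilon/8<\varepsilon$, which is the claim.

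The main obstacle is exactly that $\tau_n$ can be infinite with positive probability in the prelimit $p>p_c$, so the naïve argument that ``the conditioning event has probability tending to $1$'' is unavailable; the truncation at a fixed horizon $K$ circumvents this by reducing everything to a comparison of finite-dimensional marginals on a discrete space, where weak convergence automatically upgrades to total-variation convergence. The only other point requiring care is the bookkeeping in the first paragraph — the shift by $n$, the correct killing level $n-m$, and the identification of the resulting law with $(\Theta_n)_\ast P^{\mu_p\uparrow}_n$ — which follows directly from Lemma \ref{lem:LawBWUnderPp}.
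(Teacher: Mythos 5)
Your argument is correct and follows essentially the same route as the paper: identify the law of the killed right-peeling process as a shifted, conditioned $\mu_p$-walk via Lemma \ref{lem:LawBWUnderPp}, truncate at a fixed horizon $K$ using the $P_0^\mu$-a.s.\ finiteness of $T_{-m}$, and compare the $(K+1)$-dimensional marginals by routing through $P_n^{\mu\uparrow}$, applying Lemma \ref{lem:ConvergenceRWCSPDrift} for the $p\downarrow p_c$ limit and Lemma \ref{lem:ConvergenceRWCSPLargeX} for the $n\to\infty$ limit. The paper performs the same three-term split with the same intermediary law and the same order of choices ($K$, then $n$, then $p$); your write-up just makes the finite-dimensional-to-total-variation upgrade and the quantifier bookkeeping a bit more explicit.
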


\begin{proof}Let $\varepsilon>0$, $m\in\N$, $n\in \N$ such that $n\geq m$ and $F\in \mathcal{F}$. By Lemma \ref{lem:LawBWUnderPp} and translation invariance,
\begin{align*}\mathbb{E}_p\left( F\left( B_k^{(r)} : 0\leq k \leq T^{(B,r)}_m \right)\ \middle| \ \vert \C \vert=\infty \right)&=E_0^{\mu_p}\left( F\left( Z_k : 0\leq k \leq T_{-m} \right) \mid T_{-n}=\infty \right)\\
&=E_n^{\mu_p}\left( F\left( Z_k-n : 0\leq k \leq T_{n-m} \right) \mid T=\infty \right)\\
&=E_n^{\mu_p \uparrow}\left( F\left( Z_k-n : 0\leq k \leq T_{n-m} \right)\right).
\end{align*} Furthermore, by Lemma \ref{lem:ConvergenceRWCSPDrift} and then Lemma \ref{lem:ConvergenceRWCSPLargeX}, for every $K\in\N$, 
\[P_n^{\mu_p \uparrow}(T_{n-m}> K)\underset{p\downarrow p_c}{\longrightarrow}P_n^{\mu \uparrow}(T_{n-m}> K)\underset{n \rightarrow \infty}{\longrightarrow}P_0^{\mu}(T_{-m}> K).\] Since $T_{-m}$ is finite $P_0^{\mu}$-a.s., up to choosing $K$ large enough, then $n$ large enough and finally $p$ close enough to $p_c$,
\[\max \left(P_n^{\mu_p \uparrow}(T_{n-m}> K),P_0^{\mu}(T_{-m}> K)\right)\leq \varepsilon.\]Then,
	\begin{align*}
		\sup_{F\in \mathcal{F}} &\left| \mathbb{E}_p\left( F\left( B_k^{(r)} : 0\leq k \leq T^{(B,r)}_m \right) \ \middle| \ \vert \C \vert=\infty\right)-E^\mu_0\left( F\left( Z_k : 0\leq k \leq T_{-m} \right)\right) \right|\\
		\leq 2\varepsilon &+ \sup_{F:\Z^{K+1}\rightarrow[0,1]} \left| E_n^{\mu_p \uparrow}\left( F\left( Z_0-n, \ldots, Z_K-n \right) \right)-E^{\mu}_0\left( F\left( Z_0,\ldots,Z_K \right) \right) \right|\\
		\leq 2\varepsilon &+ \sup_{F:\Z^{K+1}\rightarrow[0,1]} \left| E_n^{\mu_p \uparrow}\left( F\left( Z_0-n, \ldots, Z_K-n \right) \right)-E^{\mu \uparrow}_n\left( F\left( Z_0-n,\ldots,Z_K-n \right) \right) \right|\\
		&+ \sup_{F:\Z^{K+1}\rightarrow[0,1]} \left| E^{\mu \uparrow}_n\left( F\left( Z_0-n,\ldots,Z_K-n \right) \right)-E^{\mu}_0\left( F\left( Z_0,\ldots,Z_K \right) \right) \right|
	\end{align*} By Lemma \ref{lem:ConvergenceRWCSPLargeX}, there exists $N\in\N$ such that for every $n\geq N$, 
	\[\sup_{F:\Z^{K+1}\rightarrow[0,1]} \left| E^{\mu \uparrow}_n\left( F\left( Z_0-n,\ldots,Z_K-n \right) \right)-E^{\mu}_0\left( F\left( Z_0,\ldots,Z_K \right) \right) \right|\leq \varepsilon,\] and by Lemma \ref{lem:ConvergenceRWCSPDrift}, for every $n\in\N$,
	\[\sup_{F:\Z^{K+1}\rightarrow[0,1]} \left| E_n^{\mu_p \uparrow}\left( F\left( Z_0-n, \ldots, Z_K-n \right) \right)-E^{\mu \uparrow}_n\left( F\left( Z_0-n,\ldots,Z_K-n \right) \right) \right| \underset{p\downarrow p_c}{\longrightarrow}0.\] This concludes the proof.\end{proof}
	
In what follows, $\mathcal{A}$ denotes the Borel $\sigma$-field of the local topology. 

\begin{Prop}\label{prp:Coupling}For every $\varepsilon>0$ and $m\in\N$, there exists $N\in\N$ such that for every $n\geq N$,  
\[ \limsup_{p\downarrow p_c}{\sup_{A\in \mathcal{A}}{\left\vert \Pb_p(M_{n,m}\in A \mid \vert \C \vert =\infty)-\Psf(M_{n,m}\in A) \right\vert}}\leq \varepsilon.\] Moreover, for every  $n,m\in\N$ such that $n\geq m$, the random planar map $M_{n,m}$ has the same law under $\Pb(\cdot \mid h(\C) \geq n)$ and $\Psf$.
\end{Prop}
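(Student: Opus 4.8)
The plan is to note that, in all three models under comparison, $M_{n,m}$ is the image under one and the same deterministic measurable map $\Xi$ of the same list of \emph{building blocks}: the stopped left walk ($\{B^{(l)}_k : 0\le k\le T^{(B,l)}_n\}$ under $\Pb_p$, and $\{A^{(l)}_k : 0\le k\le T^{(A,l)}_n\}$ under $\Psf$), the stopped right walk, the $W$- (resp.\ $V$-) walks run for their negative-binomial lifetimes, the two binary words $z^{(l)},z^{(r)}$ (resp.\ $y^{(l)},y^{(r)}$) generating the necklaces, and the independent percolated Boltzmann triangulations filling the internal faces. That $M_{n,m}$ is such a function is exactly Propositions \ref{prp:HullRevealedMap} and \ref{prp:UniformNecklaceIIC}, combined with the definition of $M_{n,m}$ under $\Psf$. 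Consequently, for any law $\Pb_\star$ on percolated maps,
\[\sup_{A\in\mathcal{A}}\big|\Pb_\star(M_{n,m}\in A)-\Psf(M_{n,m}\in A)\big|\ \le\ d_{\textup{TV}}\big(\text{distribution of the building blocks under }\Pb_\star,\ \text{distribution under }\Psf\big),\]
so it suffices to compare these finite-dimensional laws.

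\textbf{Exact identity under $\Pb(\cdot\mid h(\C)\ge n)$.} Here $p=p_c$, so $\mu$ is centred and upwards-skip-free, $\mu_{1-p_c}=\mu$ and $g(p_c)=1/2$. By Lemma \ref{lem:LawBWUnderPn} the stopped left walk is a $\mu$-walk conditioned to enter $[n,+\infty)$ before $(-\infty,0)$ and killed there, i.e.\ has law $P_0(\cdot\mid T_n<T)$; by Lemma \ref{lem:RWCSPkilled} this is exactly the $\Psf$-law of $\{A^{(l)}_k : 0\le k\le T^{(A,l)}_n\}$. The negative-binomial clock $\mathsf{NB}(\cdot,g(p_c))=\mathsf{NB}(\cdot,1/2)$, the $W$-walk with step law $\mu_{1-p_c}=\mu$, and the uniform binary words $z^{(l)},z^{(r)}$ then match their $\Psf$-counterparts verbatim; the right blocks match by the same reasoning, using that $\{h(\C)\ge n\}$ is measurable with respect to the left block alone and hence leaves the right peeling distributed as under $\Pb$ (Lemma \ref{lem:LawBWUnderP}), and the fillings carry $p_c$-percolation on both sides. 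Since $\Xi$ is deterministic, $M_{n,m}$ has the same law under $\Pb(\cdot\mid h(\C)\ge n)$ and under $\Psf$.

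\textbf{Approximate coupling with $\Pb_p(\cdot\mid|\C|=\infty)$.} The idea is to bound the total-variation distance above layer by layer, starting from the description of the building blocks given by Lemma \ref{lem:LawBWUnderPp}. For the first layer, the pair of stopped $B$-walks: by Lemma \ref{lem:DistanceTVLeftExploration} the law of the left block converges in total variation to $P_0^{\mu\uparrow}$ stopped at $T_n$ as $p\downarrow p_c$, and by Lemma \ref{lem:DistanceTVRightExploration}, given $\varepsilon$ and $m$ there is $N$ such that for $n\ge N$ the law of the right block is within $\varepsilon$ (in $\limsup_{p\downarrow p_c}$) of $P_0^{\mu}$ stopped at $T_{-m}$; these are precisely the $\Psf$-laws of the two blocks, so after this layer the lifetimes $(T^{(B,l)}_n,T^{(B,r)}_m)$ are tight and $\varepsilon$-close to $(T^{(A,l)}_n,T^{(A,r)}_m)$. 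Conditionally on the first layer, everything else is a finite collection of i.i.d.\ items whose parameters converge to the $\Psf$-ones as $p\downarrow p_c$ — the negative-binomial clocks with $g(p)\to 1/2$, the $W$-walk steps with $\mu_{1-p}\to\mu$, the Bernoulli necklace variables, and for each face a $\mathbb{W}_l$ triangulation (whose law does not depend on $p$) carrying Bernoulli$(p)$ percolation on an a.s.\ finite vertex set — each contributing a total-variation error that vanishes as $p\downarrow p_c$, uniformly over the tight range of the first layer. Summing the layers and letting $p\downarrow p_c$ then yields the claimed bound $\varepsilon$ for $n\ge N$.

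\textbf{Main obstacle.} The delicate point is the right-peeling block in the approximate-coupling part: whereas the left block converges to a genuine conditioned walk $P_0^{\mu\uparrow}$, the right block must be matched with an \emph{unconditioned} walk $P_0^{\mu}$, and the residual conditioning ``to stay above $-n$'' only disappears after first sending $p\downarrow p_c$ and then $n\to\infty$; this asymmetry is exactly why the statement is quantified over $n\ge N$, and it is absorbed into Lemmas \ref{lem:DistanceTVRightExploration} and \ref{lem:ConvergenceRWCSPLargeX}. A secondary nuisance is the bookkeeping needed to turn the layer-by-layer convergences into a single uniform total-variation bound, which rests on the tightness of the number of pieces provided by the first-layer convergence.
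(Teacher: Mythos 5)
Your proposal is correct and follows essentially the same route as the paper: isolate the building blocks via Propositions \ref{prp:HullRevealedMap}--\ref{prp:UniformNecklaceIIC}, obtain an exact identity under $\Pb(\cdot\mid h(\C)\ge n)$ via Lemma \ref{lem:RWCSPkilled}, and control the supercritical case layer by layer using Lemmas \ref{lem:DistanceTVLeftExploration}--\ref{lem:DistanceTVRightExploration} for the $B$-blocks and a vanishing total-variation error for the remaining conditionally-independent items. In fact your explicit accounting of the $p$-dependence of the secondary layer (the $W$-step law $\mu_{1-p}\to\mu$, the negative-binomial parameter $g(p)\to 1/2$, the Bernoulli necklace and percolation parameters) is slightly more careful than the paper's equations \eqref{eqn:Coupling3}--\eqref{eqn:Coupling4}, which for $p>p_c$ only hold in the limit $p\downarrow p_c$ rather than as exact identities.
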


\begin{proof}We start with the first assertion. Throughout this proof, we use $\Pb_p^\infty:=\Pb_p(\cdot \mid \vert \C \vert = \infty )$ in order to shorten the notation. Let $\varepsilon>0$ and $m\in\N$. By Lemma \ref{lem:DistanceTVRightExploration} and the definition of the $\IIC$, there exists $N\in\N$ such that for every $n\geq N$,
\begin{equation}\label{eqn:Coupling1}\limsup_{p\downarrow p_c}\sup_{F\in\mathcal{F}} \left| \mathbb{E}^\infty_p\left( F\left( B_k^{(r)} : 0\leq k \leq T^{(B,r)}_m  \right) \right)-\mathsf{E}\left( F\left( A_k^{(r)} : 0\leq k \leq T^{(A,r)}_m \right) \right) \right| \leq \varepsilon.\end{equation} We now fix $n\geq N$, and by Lemma \ref{lem:DistanceTVLeftExploration}
\begin{equation}\label{eqn:Coupling2}
	\lim_{p\downarrow p_c}\sup_{F\in \mathcal{F}} \left| \mathbb{E}^\infty_p\left( F\left( B_k^{(l)} : 0\leq k \leq T^{(B,l)}_n \right) \right)-\mathsf{E}\left( F\left( A_k^{(l)} : 0\leq k \leq T^{(A,l)}_n \right) \right) \right| =0.
\end{equation} Furthermore, by Lemma \ref{lem:LawBWUnderPp} and the definition of the $\IIC$, for every $K\in \N$ and $F\in\mathcal{F}$,
\begin{equation}\label{eqn:Coupling3}
\mathbb{E}^\infty_p\left( F\left( W_k^{(l)} : 0\leq k \leq T^{(W,l)}_n \right)\ \middle| \ T^{(B,l)}_n=K \right)=\mathsf{E}\left( F\left( V_k^{(l)} : 0\leq k \leq T^{(V,l)}_n \right) \ \middle| \ T^{(A,l)}_n=K \right).
\end{equation} For the same reason, for every $K,K'\in \Z_+$ and $F\in\mathcal{F}$, 
\begin{multline}\label{eqn:Coupling4}
\mathbb{E}^\infty_p\left( F\left( z_k^{(l)} : 0\leq k \leq \sigma^{(l)}_n \right) \ \middle| \ \left(T^{(B,l)}_n,T^{(W,l)}_n\right)=(K,K') \right)\\
=\mathsf{E}\left( F\left( y_k^{(l)} : 0\leq k \leq \rho^{(l)}_n \right)\ \middle| \ \left(T^{(A,l)}_n,T^{(V,l)}_n\right)=(K,K') \right).
\end{multline} The assertions \eqref{eqn:Coupling3} and \eqref{eqn:Coupling4} hold when replacing $l$ by $r$. Let $M'_{n,m}$ be the planar map obtained from the construction of $M_{n,m}$ in Sections \ref{sec:RevealedMap} and \ref{sec:IICMeasure} without filling the internal faces with Boltzmann triangulations with a simple boundary. Using \eqref{eqn:Coupling1}, \eqref{eqn:Coupling2}, \eqref{eqn:Coupling3} and \eqref{eqn:Coupling4}, we get that 
\begin{equation}\label{eqn:CouplingMPrime}
	\limsup_{p\downarrow p_c}{\sup_{A\in \mathcal{A}}{\left\vert \Pb_p^\infty(M'_{n,m}\in A)-\Psf(M'_{n,m}\in A) \right\vert}}\leq \varepsilon.
\end{equation} Since the colouring of the vertices in the Boltzmann triangulations filling in the faces of $M'_{n,m}$ is a Bernoulli percolation with parameter $p$ (resp. $p_c$), for every finite map $\m\in\M_f$ we have
\[ \limsup_{p\downarrow p_c}{\sup_{A\in \mathcal{A}}{\left\vert \Pb_p^\infty(M_{n,m}\in A \mid M'_{n,m}=\m)-\Psf(M_{n,m}\in A \mid M'_{n,m}=\m) \right\vert}}=0.\]
Since $M'_{n,m}$ is $\Psf$-a.s. finite, together with \eqref{eqn:CouplingMPrime} this proves the first assertion.

For the second assertion, by Lemma \ref{lem:RWCSPkilled}, the above coding processes have the same law under $\Pb(\cdot \mid h(\C) \geq n)$ and $\Psf$. The same arguments apply and conclude the proof.\end{proof}

\subsection{Proof of the $\normalfont{\IIC}$ results}\label{sec:IICTheorem}

\begin{proof}[Proof of Theorem \ref{thm:IICTheorem}]Let $R\in \Z_+$ and $\varepsilon>0$. We first prove that under $\Psf$, $\Pb(\cdot \mid h(\C) \geq n)$ and $\Pb_p(\cdot \mid \vert \C \vert = \infty )$, $M_{n,m}$ contains $\B_R(M_\infty)$ with high probability for a good choice of the parameters, where $M_\infty$ is the underlying infinite half-planar triangulation (with origin vertex $\rho$). This closely follows the proof of Theorem \ref{thm:DecompositionTheorem}, to which we refer for more details.

We first restrict our attention to $\Psf$, and define the random maps 
\[M_N:=M_{N,\xi_N}, \quad \xi_N:=\inf\left\lbrace A^{(l)}_k : k \geq T^{(A,l)}_N\right\rbrace,\quad N\in\N.\] We denote by $d_N$ the graph distance on $M_N$. By Proposition \ref{prp:Submap}, $M_N$ is $\Psf$-a.s.\ a sub-map of $M_\infty$ and we denote by $\partial M_{N}$ its boundary as such (as in \eqref{eqn:DefBoundary}). Recall that by Tanaka's theorem, we have $\xi_N\rightarrow \infty$ $\Psf$-a.s.\ as $N\rightarrow\infty$. Let $\{\tau_k : k\in\Z_+\}$, $\{\tau^{l}_k : k\in\Z_+\}$ and $\{\tau^{r}_k : k\in\Z_+\}$ be the endpoints of the excursion intervals of $A^{(r)}$, $V^{(l)}$ and $V^{(r)}$ above their infimum processes, as in (\ref{eqn:ExcursionIntervals}). They define cut-points that disconnect the origin from infinity in $\Lt_{A^{(l)},A^{(r)}}$, $\Lt_{V^{(l)}}$ and $\Lt_{V^{(r)}}$ (and thus in $\Hc$, $\Hl$ and $\Hr$) respectively, by the identities
\[v_k:=p_{A^{(r)}}\left( A^{(r)}_{\tau_k} \right), \ v^{l}_k:=p_{V^{(l)}}\left( V^{(l)}_{\tau^{l}_k} \right) \ \text{and} \ v^{r}_k:=p_{V^{(r)}}\left( V^{(r)}_{\tau^{r}_k} \right),\quad k\in\Z_+.\] The numbers of cut-points of $\Hc$ identified in $M_{N}$ read
\[K(N):=\# \left \lbrace k\in\Z_+ : \tau_k<T^{(A,r)}_{\xi_N} \right \rbrace,\] and similarly for $\Hl$ and $\Hr$ with 
\[K_l(N):=\# \left \lbrace k\in\Z_+ : \tau^{l}_k<T^{(V,l)}_{N} \right \rbrace \quad \text{and} \quad \ K_r(N):=\# \left \lbrace k\in\Z_+ : \tau^{r}_k<T^{(V,r)}_{\xi_N} \right \rbrace.\] Since the processes $A^{(r)}$, $V^{(l)}$ and $V^{(r)}$ are centered random walks and $\xi_N\rightarrow \infty$ $\Psf$-a.s., we get
\[K(N), \ K_l(N), \ K_r(N) \underset{N\rightarrow \infty}{\longrightarrow}  \infty \quad \Psf\text{-a.s.}.\] 
We define an equivalence relation $\approx$ as in Theorem \ref{thm:DecompositionTheorem}, by identifying vertices between consecutive cut-points in $\Hc$, $\Hc_l$ and $\Hc_r$. We denote the quotient map $M_N/\approx$ by $M'_N$ (the root edge of $M'_N$ is the root edge of $M_\infty$) and the graph distance on $M'_N$ by $d'_N$. The family $\{M'_{N} : n\in\N\}$ is a consistent sequence of locally finite maps with origin $v_0=\rho$. Moreover, for every $N\in\N$, the boundary of $M'_{N}$ in $M'_{N+1}$ is $\{v_{K(N)},v^{l}_{K_l(N)},v^{r}_{K_r(N)}\}$. Thus, the sequences \[\left\lbrace d'_N\left(\rho,v_{K(N)}\right) : N\in\N\right\rbrace, \ \left\lbrace d'_N\left(\rho,v^l_{K_l(N)}\right) : N\in\N \right\rbrace \ \text{and} \ \left\lbrace d'_N\left(\rho,v^r_{K_r(N)}\right) : N\in\N \right\rbrace \] are non-decreasing and diverge $\Psf$-a.s.. By definition of $\approx$, since we discover the finite regions that are swallowed during the exploration, the representatives of $\partial M_N$ in $M'_N$ are $v_{K(N)}$, $v^{l}_{K_l(N)}$ and $v^{r}_{K_r(N)}$. As a consequence,
\begin{equation}\label{eqn:DistanceBoundaryConvergenceIIC}
	d_N\left( \rho,\partial M_{N} \right)\underset{N \rightarrow \infty}{\longrightarrow}\infty \quad \Psf\text{-a.s.}.
\end{equation} Let us choose $N\in\N$ such that $\Psf(d_N(\rho,\partial M_{N})<R)\leq \varepsilon$. By the proof of Proposition \ref{prp:Submap}, we have that $\Psf$-a.s., for every $n\geq N$ and $m\geq \xi_N$, $M_N$ is a sub-map of $M_{n,m}$. Since $\xi_N$ is $\Psf$-a.s.\ finite, we can fix $m\in\N$ such that $\Psf(\xi_N>m)\leq \varepsilon$, and thus for every $n\geq N$,
\begin{equation}\label{eqn:SubmapFinite}
	\Psf(M_N\subseteq M_{n,m})\geq 1-\varepsilon
\end{equation} Since $\xi_n\rightarrow \infty$ $\Psf$-a.s., there exists $N_1\geq N$ such that for every $n\geq N_1$, $\Psf(\xi_n<m)\leq \varepsilon$. By Proposition \ref{prp:Submap}, it follows that for every $n\geq N_1$, 
\begin{equation}\label{eqn:Submap}
\Psf(M_{n,m}\subseteq M_\infty)\geq 1-\varepsilon.
\end{equation} By construction, when $M_N\subseteq M_{n,m}\subseteq M_\infty$, we have $d_N\left(\rho,\partial M_N\right)\leq d\left(\rho,\partial M_{n,m}\right)$ (where $d$ is the graph distance on $M_{n,m}$ and $\partial M_{n,m}$ its boundary in $M_\infty$). Thus, for every $n\geq N_1$,
\begin{equation}\label{eqn:DistanceRoot}
	\Psf(M_{n,m}\subseteq M_\infty, d\left(\rho,\partial M_{n,m}\right)\geq R) \geq 1-3\varepsilon.
\end{equation} By Proposition \ref{prp:Coupling}, we find $N_2\geq N_1$ such that for every $n\geq N_2$,
\begin{equation}\label{eqn:PropCoupling}
	\limsup_{p\downarrow p_c}{\sup_{A\in \mathcal{A}}{\left\vert \Pb_p(M_{n,m}\in A \mid \vert \C \vert =\infty)-\Psf(M_{n,m}\in A) \right\vert}}\leq \varepsilon,
\end{equation} while $M_{n,m}$ has the same distribution under $\Psf$ and $\Pb(\cdot \mid h(\C) \geq n)$. Under $\Pb(\cdot \mid h(\C) \geq n)$ and $\Pb_p(\cdot \mid \vert \C \vert = \infty )$, $M_{n,m}$ is a.s.\ a sub-map of $M_\infty$. By the construction of $M_{n,m}$, \eqref{eqn:DistanceRoot} and \eqref{eqn:PropCoupling} we have for every $n \geq N_2$,
\begin{equation}\label{eqn:DistanceRoot2}\limsup_{p\downarrow p_c}{\Pb_p(d\left(\rho,\partial M_{n,m}\right)<R \mid \vert \C \vert =\infty)}\leq 4\varepsilon \quad \text{and} \quad \Pb(d\left(\rho,\partial M_{n,m}\right)<R \mid h(\C) \geq n)\leq 3 \varepsilon.\end{equation} Finally, on the event $\{M_{n,m}\subseteq M_\infty\}$, $d(\rho,\partial M_{n,m})\geq R$ enforces that $\B_R(M_{n,m})=\B_R(M_\infty)$, which concludes the first part of the proof.

Now, let $A\in \mathcal{A}$ be a Borel set for the local topology. By \eqref{eqn:DistanceRoot2}, for every $n\geq N_2$,
\begin{align*}&\limsup_{p\downarrow p_c}{\left\vert \Pb_p(\B_R(M_{\infty})\in A \mid \vert \C \vert =\infty)-\Pb_p(\B_R(M_{n,m})\in A \mid \vert \C \vert =\infty) \right\vert}\\
&\leq 2\limsup_{p\downarrow p_c}{\Pb_p(d\left(\rho,\partial M_{n,m}\right)<R \mid \vert \C \vert =\infty)} \leq 8\varepsilon.
\end{align*}Then, by \eqref{eqn:PropCoupling}, for every $n\geq N_2$,
\[\limsup_{p\downarrow p_c}{\left\vert \Pb_p(\B_R(M_{n,m})\in A \mid \vert \C \vert =\infty)-\Psf(\B_R(M_{n,m})\in A) \right\vert } \leq \varepsilon.\] Finally, by \eqref{eqn:DistanceRoot}, for every $n\geq N_1$,
\[\left\vert \Psf(\B_R(M_{n,m})\in A)-\Psf(\B_R(M_{\infty})\in A) \right\vert\leq 2\left( 1- \Psf(M_{n,m}\subseteq M_\infty, d\left(\rho,\partial M_{n,m}\right)\geq R)\right)\leq 6\varepsilon.\] This concludes the proof under $\Pb_p(\cdot \mid \vert \C \vert = \infty )$ since $R$ and $\varepsilon$ are arbitrary. 

Under $\Pb(\cdot \mid h(\C) \geq n)$, the proof is simpler. By \eqref{eqn:DistanceRoot2} once again, for every $n\geq N_2$,
\begin{multline*}\left\vert \Pb(\B_R(M_{\infty})\in A \mid   h(\C) \geq n)-\Pb(\B_R(M_{n,m})\in A \mid  h(\C) \geq n) \right\vert\\
\leq 2\Pb(d_{\textrm{gr}}\left(\rho,\partial M_{n,m}\right)<R \mid h(\C) \geq n) \leq 6\varepsilon,
\end{multline*} and by Proposition \ref{prp:Coupling}, 
\[\Pb(\B_R(M_{n,m})\in A \mid  h(\C) \geq n)=\Psf(\B_R(M_{n,m})\in A).\] This concludes the proof. \end{proof}

\begin{Rk}In view of \cite[Theorem 1]{bertoin_conditioning_1994}, we believe that this proof can be adapted to show that in the sense of weak convergence, for the local topology \[\Pb_{p_c}(\cdot \mid T \geq n) \underset{n \rightarrow \infty}{\Longrightarrow}  \Piic.\] Otherwise said, the $\IIC$ arises as a local limit of a critically percolated $\UIHPT$ when conditioning the exploration process to survive a long time. It is also natural to conjecture that conditioning the open cluster of the origin to have large hull perimeter (as in \cite{curien_percolation_2014}) or to reach the boundary of a ball of large radius (as in \cite{kesten_incipient_1986}) yields the same local limit. However, our techniques do not seem to allow to tackle this problem.
\end{Rk}

\section{Scaling limits and perspectives}\label{sec:ScalingLimits}
In the recent work~\cite{baur_classification_2016} (see also \cite{gwynne_scaling_2016}), Baur, Miermont and Ray introduced the scaling limit of the quadrangular analogous of the $\UIHPT$, called the Uniform Infinite Half-Planar Quadrangulation ($\UIHPQ$). Precisely, they consider a map $\mathbf{Q}^\infty_{\infty}$ having the law of the $\UIHPQ$ as a metric space equipped with its graph distance $d_{\textrm{gr}}$, and multiply the distances by a scaling factor $\lambda$ that goes to zero, proving that~\cite[Theorem 3.6]{baur_classification_2016}
\[\left(\mathbf{Q}^\infty_{\infty},\lambda d_{\textrm{gr}} \right) \underset{\lambda \rightarrow 0}{\overset{(d)}{\longrightarrow}} \textsf{BHP},\] in the local Gromov-Hausdorff sense (see~\cite[Chapter 8]{burago_course_2001} for more on this topology). The limiting object is called the \textit{Brownian Half-Plane} and is a half-planar analog of the Brownian Plane of~\cite{curien_brownian_2013}. Such a convergence is believed to hold also in the triangular case.

We now discuss the conjectural continuous counterpart of Theorem \ref{thm:DecompositionTheorem}, and the connection with the \textsf{BHP}. On the one hand, the processes $B$ and $W$ introduced in Section \ref{sec:PercolationClusters} have a scaling limit. Namely, using the asymptotic (\ref{asymptZ}) and standard results of convergence of random walks with steps in the domain of attraction of a stable law~\cite[Chapter 8]{bingham_regular_1989} one has
\[\left(\lambda^{2}B_{\lfloor t/\lambda^3 \rfloor}\right)_{t\geq 0} \underset{\lambda \rightarrow 0}{\overset{(d)}{\longrightarrow}} (X_t)_{t \geq 0},\] in distribution for Skorokhod's topology, where $X$ is (up to a multiplicative constant) the spectrally negative $3/2$-stable process. This suggests that the looptrees $\Lt_B$ and $\Lt_W$  converge when distances are rescaled by the factor $\lambda^2$ towards a non-compact version of the random stable looptrees of~\cite{curien_random_2014} (with parameter $3/2$), in the local Gromov-Hausdorff sense. This object is supposed to be coded by the process $X$ extended to $\mathbb{R}$ by the relation $X_t=-t$ for every $t\leq 0$ and equivalence relation
\begin{equation}\label{eqn:EquivX}
  s \sim t \quad \text{iff} \quad X_t=X_s=\inf_{s \wedge t \leq u \leq s \vee t}{X_u}.
\end{equation} On the other hand, one can associate to each negative jump of size $\ell$ of $X$ (which codes a loop of the same length in the infinite stable looptree, see~\cite{curien_random_2014}) a sequence of jumps of $B$ (equivalently, of loops of $\Lt_B$) with sizes $\{\ell_\lambda : \lambda>0\}$ satisfying  
\[\lambda^2\ell_\lambda \underset{\lambda \rightarrow 0}{\longrightarrow} \ell.\] With each negative jump of $B$ is associated a Boltzmann triangulation $M_{\lambda}$ with a simple boundary of size $\ell_\lambda$ and graph distance $d_\lambda$, that fills in the corresponding loop in the decomposition of the $\UIHPT$. Inspired by~\cite[Theorem 8]{bettinelli_compact_2015}, we expect that there exists a constant $c>0$ such that 
\[\left(M_{\lambda},(c\ell_\lambda)^{-1/2}d_\lambda \right) \underset{\lambda \rightarrow 0}{\overset{(d)}{\longrightarrow}} \textsf{FBD}_1,\] in the Gromov-Hausdorff sense, where $\textsf{FBD}_1$ is a compact metric space called the Free Brownian Disk with perimeter 1, originally introduced in~\cite{bettinelli_scaling_2015} (this result has been proved for Boltzmann bipartite maps with a general boundary). By a scaling argument, we obtain 
\[\left(M_{\lambda},\lambda d_\lambda \right) \underset{\lambda \rightarrow 0}{\overset{(d)}{\longrightarrow}} \textsf{FBD}_{c\ell},\] where $\textsf{FBD}_{c\ell}$ is the \textsf{FBD} with perimeter $c\ell$. From these observations, it is natural to conjecture that the looptrees  $\Lt_B$ and $\Lt_W$ filled in with independent Boltzmann triangulations converge when rescaled by a factor that goes to zero, towards a collection of independent \textsf{FBD} with perimeters given by the jumps of a Lévy $3/2$-stable process, and glued together according to the looptree structure induced by this process. Based on Theorem \ref{thm:DecompositionTheorem}, we believe that there is a way to glue two independent copies of the above looptrees of brownian disks along their boundaries so that the resulting object has the law of the \textsf{BHP}. Similarly, one may give a rigorous sense to the $\IIC$ embedded in the \textsf{BHP}, by passing to the scaling limit in Theorem \ref{thm:IICTheorem}. However, there are several possible metrics on the topological quotient, and it is not clear which metric should be chosen. 

\smallskip

This question is connected to~\cite{duplantier_liouville_2014,sheffield_conformal_2010}, where gluings of quantum surfaces are discussed. As we already mentioned, Theorem \ref{thm:DecompositionTheorem} can be seen as a discrete counterpart to~\cite[Theorem 1.16-1.17]{duplantier_liouville_2014}. Applying these results with the choice of parameter $\gamma=\sqrt{8/3}$, we obtain that the $\theta=\pi$ quantum wedge (which is believed to be a conformal version of the \textsf{BHP}) is obtain as the gluing of two independent forested wedges with parameter $\alpha=3/2$ (an infinite counterpart to the $3/2$-stable looptree, where loops are filled in with disks equipped with a metric defined in terms of the Gaussian Free Field - a conformal version of the brownian disk). Moreover, the counterflow line that separates the forested wedges is a Schramm-Loewner Evolution with parameter 6, which is also the scaling limit of percolation interfaces in the triangular lattice~\cite{schramm_scaling_2000,smirnov_critical_2001}. In the recent work \cite{gwynne_convergence_2017}, Gwynne and Miller also proved the convergence of the percolation interface for face percolation on the $\UIHPQ$ towards Schramm-Loewner Evolution with parameter 6 on the $\textsf{BHP}$.

\begin{Ack}I would like to thank Gourab Ray for suggesting this question and interesting discussions, Grégory Miermont for insightful comments and many useful suggestions and Erich Baur for his helpful remarks. I am greatly indebted to Nicolas Curien for pointing out an inaccuracy on a preliminary version of this work.\end{Ack}

\bibliography{IICUIHPT.bib}
\bibliographystyle{abbrv}

\end{document}